\documentclass[10pt,twoside,a4paper,english]{article}

\title{Shallow water asymptotic models for the propagation of internal waves}

\author{Vincent Duch\^ene%
\thanks{IRMAR - UMR6625, Univ. Rennes 1, CNRS, Campus de Beaulieu, F-35042 Rennes cedex, France. \url{vincent.duchene@univ-rennes1.fr}}, 
Samer Israwi%
\thanks{ Laboratory of Mathematics-EDST and Faculty of Sciences I, Lebanese University, Lebanon and CRAMS: Center for Research in Applied Mathematics and Statistics, AUL, Lebanon.  \url{s_israwi83@hotmail.com}},
Raafat Talhouk%
\thanks{Laboratoire de math\'ematiques-EDST et Facult\'e des sciences I, Universit\'e Libanaise, Beyrouth, Liban. \url{rtalhouk@ul.edu.lb}}
}

\date{\today}

\usepackage{babel}         
\usepackage[utf8]{inputenc} 
\usepackage[T1]{fontenc}    
\usepackage[ec]{aeguill}    
\usepackage{amsmath, amsthm} 
\usepackage{amsfonts,amssymb}
\usepackage{stmaryrd}
\usepackage{float}           
\usepackage[pdftex]{graphicx} 
\usepackage{epstopdf}
\usepackage{tabularx}
\usepackage{fancyhdr}       
\usepackage{url}            
\urlstyle{sf}               
\usepackage{hyperref}       

\numberwithin{equation}{section}

\setlength{\oddsidemargin}{20pt}  	
\setlength{\evensidemargin}{0pt}  
\setlength{\textwidth}{430pt}	 
\setlength{\textheight}{670pt}

\fancypagestyle{thestyle}{%
	\fancyfoot{}
	\fancyhead[CE]{Shallow water asymptotic models for the propagation of internal waves} 
	\fancyhead[CO]{\scriptsize Vincent Duch\^ene, Samer Israwi, Raafat Talhouk} 
	\fancyhead[RE,LO]{\scriptsize\today}  
	\fancyhead[LE,RO]{\scriptsize\thepage}
	}

\pagestyle{thestyle}

\newcommand{\RR}{\mathbb{R}}

\renewcommand{\t}{\widetilde}
\renewcommand{\b}{\overline}

\renewcommand{\P}{\mathcal{P}}
\newcommand{\N}{\mathcal{N}}
\newcommand{\T}{\mathcal{T}}
\newcommand{\Q}{\mathcal{Q}}
\newcommand{\R}{\mathcal{R}}
\renewcommand{\O}{\mathcal{O}}
\newcommand{\G}{\mathcal{G}}
\newcommand{\V}{\mathcal{V}}
\newcommand{\mfT}{\mathfrak{T}}
\newcommand{\p}{\mathfrak{p}}
\newcommand{\mfQ}{\mathfrak{Q}}
\newcommand{\mfR}{\mathfrak{R}}
\DeclareMathOperator*{\esssup}{ess\,sup}
\DeclareMathOperator{\Bo}{Bo}
\DeclareMathOperator{\bo}{bo}
\DeclareMathOperator{\curl}{curl}
\newcommand{\dsp}{\displaystyle}
\newcommand{\nn}{\nonumber}
\newcommand{\id}[1]{\left\vert_{\scriptstyle #1}\right.}

\newtheorem{Theorem}{Theorem}[section]
\newtheorem{Definition}[Theorem]{Definition}
\newtheorem{Proposition}[Theorem]{Proposition}
\newtheorem{Lemma}[Theorem]{Lemma}

\newtheorem{Remark}[Theorem]{Remark}

\begin{document}
\maketitle

\begin{abstract}
We are interested in asymptotic models for the propagation of internal waves at the interface between two shallow layers of immiscible fluid, under the rigid-lid assumption. We review and complete existing works in the literature, in order to offer a unified and comprehensive exposition. Anterior models such as the shallow water and Boussinesq systems, as well as unidirectional models of Camassa-Holm type, are shown to descend from a broad Green-Naghdi model, that we introduce and justify in the sense of consistency. Contrarily to earlier works, our Green-Naghdi model allows a non-flat topography, and horizontal dimension $d=2$. Its derivation follows directly from classical results concerning the one-layer case, and we believe such strategy may be used to construct interesting models in different regimes than the shallow-water/shallow-water studied in the present work.
\end{abstract}


\section{Introduction}

The study of gravity waves at the surface of a homogeneous layer of fluid has attracted a lot of interests in a broad range of scientific communities. We let the reader refer to~\cite{Lannes} for a comprehensive survey of the state of the art concerning this problem, and its many interesting aspects, and we quickly discuss here some known results and methods, relevant to the present work.

 While the equations governing the motion of a homogeneous layer of ideal, incompressible, irrotationnal fluid under the only influence of gravity, that we name {\em full Euler system}, are relatively easy to derive, their theoretical study is extremely challenging.
This explains why the rigorous, mathematical analysis of the governing equations is quite recent, and still enjoys present-day improvements from an active community.  In particular, the well-posedness of the Cauchy problem outside of the analytical framework has been discussed among others by Nalimov~\cite{Nalimov74}, Yosihara~\cite{Yosihara82}, Craig~\cite{Craig85}, Wu~\cite{Wu97,Wu99} and Lannes~\cite{Lannes05}. Such results are regularly improved (time of existence, regularity of the initial data, {\em etc.}); see~\cite{Wu09,Wu11,GermainMasmoudiShatah12,AlazardBurqZuily12,IonescuPusateri13,AlazardDelort13} and references therein.

Nevertheless, the solutions of these equations are very difficult to describe, and the relevant  hydrodynamic processes are not easily visible in these equations. At this point, a classical method is to select an asymptotic regime (described by dimensionless parameters of the domain and of the flow), in which we look for approximate models and hence for approximate solutions.

Many such asymptotic models have been derived, going back to the  late 19\textsuperscript{th} century. For example, Saint-Venant~\cite{Saint-Venant71} derived the classical shallow-water equation, by assuming that the depth of the layer of fluid is small, so that the horizontal velocity across the layer may be averaged as a constant; while Boussinesq~\cite{Boussinesq71,Boussinesq72} derived the model which bears his name, describing the propagation of gravity-waves of small amplitude and long wavelength. Later on, Serre~\cite{Serre53} and Green,Naghdi~\cite{GreenNaghdi76} introduced a higher order model, which has since been widely used in coastal oceanography, as it takes into account the dispersive effects neglected by the shallow-water (Saint-Venant) model and allows waves of greater amplitude than the Boussinesq model.

However, the previously mentioned works are restricted to the formal level, and the rigorous, mathematical justification of asymptotic models received a satisfactory answer only recently. We say that a model is {\em fully justified} (using the terminology of~\cite{Lannes}) if the Cauchy problem for both the full Euler system and the asymptotic model is well-posed for a given class of initial data, and over the relevant time scale; and if the solutions with corresponding initial data remain close.
The full justification of  a system (S) follows from:
\begin{itemize}
\item (Consistency) One proves that families of solutions to the asymptotic model, existing and controlled over the relevant time scale satisfies the full Euler system up to a small residual. 
\item (Existence) One proves that solutions of the full Euler system and solutions of the the model~(S) with corresponding initial data do exist. 
\item (Convergence) One proves that the solutions of the full Euler system, and the ones of the asymptotic model, with corresponding initial data, remain close over the relevant time scale. 
\end{itemize}
A result of Alvarez-Samaniego and Lannes~\cite{Alvarez-SamaniegoLannes08} provides the existence and uniqueness of a solution to the full Euler system over the relevant time scale, uniformly with respect to the dimensionless parameters at stake, as well as a stability result with respect to perturbation of the equations. As a consequence, any consistent and well-posed asymptotic model is automatically justified in the sense described above. 
This strategy may be applied to the (quasilinear, hyperbolic) shallow-water model, to various Boussinesq models~\cite{KanoNishida86,BonaChenSaut02,BonaChenSaut04,Chazel07,SautXu12} and to Green-Naghdi models~\cite{Alvarez-SamaniegoLannes08,Israwi10a,Israwi11}. We let the reader refer to~\cite[Appendix C]{Lannes} for a reader's digest of the numerous known results on this aspect.

All the aforementioned models coincide at the lower order of precision as a simple wave equation, which in dimension $d=1$ predicts that any initial perturbation of the free surface will split up into two counter-propagating waves. An important family of models is dedicated to the precise study of the evolution of one of the two waves when higher order terms are included. The most famous example of such model is the Korteweg-de Vries\cite{KortewegDe95} equation, but various extensions and generalizations have been proposed; see~\cite{Johnson02,ConstantinLannes09,Israwi10} and references therein. Again, the full justification of such models is recent: see~\cite{KanoNishida86,SchneiderWayne00,BonaColinLannes05,Chazel09,ConstantinLannes09,Israwi10}.
\bigskip

All the aforementioned works are concerned with the case of a single layer of homogeneous fluid. Such assumption of may seem too crude for applications to oceanographic problems, as variations of salinity induce variation of density. In the present work, we are interested in the simplest setting that models such a variation of density: we consider a system of two layers of homogeneous, immiscible fluid, and we are interested in the evolution of the interface between the two layers. A considerable amount of interests has been given to such bi-fluidic systems; see~\cite{HelfrichMelville06} for a comprehensive review of the ins and outs on this topic.

The governing equations of the bi-fluidic systems share many aspects and properties of the aforementioned water-wave system, and its study has often been carried out in parallel. In particular, one can derive asymptotic models in analogy with the ones presented above. It is out of the scope of this introduction to present an exhaustive review of all the different models, as many settings are of interests. Here, and in the present work, we restrict ourselves to the case of a surface delimited by a flat, rigid lid (as deformation of the surface is in practice small compared to the deformation of the interface), and to the so-called shallow water/shallow water regime. In this regime, the two layers are assumed to be of comparable depth, and both small when compared to the typical horizontal wavelength of the flow. In that case, the models corresponding to the shallow water and Boussinesq systems have been derived in~\cite{ChoiCamassa96,CraigGuyenneKalisch05}, and justified in the sense of consistency in~\cite{BonaLannesSaut08} (where, incidentally, a much larger range of scaling regimes are studied). Green-Naghdi type models where obtained in the one-dimensional case in~\cite{Matsuno93}, and in the two-dimensional in~\cite{ChoiCamassa99}. An extensive study of scalar models has been provided by one of the authors in~\cite{Duchene13}. Let us note that all the aforementioned works are restricted to the case of a flat bottom, contrarily to the present work.

As attested earlier, bi-fluidic models have a similar structure as the ones in the one-layer case. As a matter of fact, one recovers the latter from the former when we assume that the mass density of the top layer is zero. Yet a few remarkable differences arise, that originate interesting questions and mathematical challenges. Among them, we would like to emphasize
\begin{enumerate}
\item {\em The role of surface tension.} Contrarily to the water wave case, the Cauchy problem for the full Euler system is ill-posed in Sobolev spaces in the absence of surface tension. However, surface tension is very small in practical cases, so that its effect is systematically negligible in all present asymptotic models. In~\cite{Lannes13}, Lannes shows that a small amount of surface tension is sufficient to guarantee the well-posedness over times consistent with observations, provided that a stability estimate holds; see the somewhat more precise description in Section~\ref{sec:WPfullEuler}.
\item {\em Absence of stability result.} An equivalent result as the stability result (with respect to perturbations of the equation) for the full Euler system as described above is not known in the bi-fluidic case, partly due to the difficulties described in the previous item. In order to deal with this, a strategy consists in proving such stability result on the model itself. Thus the full justification of the model is a consequence of its well-posedness, and the full Euler system's consistency with the asymptotic model (and not the other way around). This strategy has been applied by the authors in~\cite{DucheneIsrawiTalhouk}, and we recall these results in Section~\ref{sec:Serre}. 
\item {\em A non-local operator.} As noticed in~\cite{BonaLannesSaut08}, shallow water models for internal waves with a rigid lid contain a non-local operator, which involves in particular the projection into the space of gradient functions; see Definition~\ref{defV} in Section~\ref{sec:GN}. This operator appears only in the case $d=2$, and under the rigid-lid configuration (see~\cite{Duchene10} for shallow water/shallow water models with a free surface). The precise effect and meaning of this non-local term is yet to be fully understood.
\item {\em A critical-ratio.} There exists a critical ratio for the depth of the two layers for which the first order (quadratic) nonlinearities vanish; $\delta^2=\gamma$ in, {\em e.g.},~\eqref{eqn:BoussinesqMean}. This phenomenon does not occur in the one-layer case, and 
motivates a precise study of unidirectional asymptotic models with stronger nonlinearities than in the classical long wave regime, and especially in the Camassa-Holm regime, for which first order dispersion and nonlinearities are formally of same magnitude in the critical case. This study has been carried out in~\cite{Duchene13}, completed in~\cite{DucheneIsrawiTalhouk}, and the results are presented in Section~\ref{sec:scalar}.
\end{enumerate}

In the present work, we report and complete recent mathematical results concerning the bi-fluidic system under the rigid lid assumption; from the well-posedness of the full Euler system to the justification of various models in the shallow water regime. Our aim is to provide a unified and comprehensive exposition of the existing theory. The above concerns and remarks appear spontaneously in the course of the study. We would like to mention the work of Saut, which pursues similar objectives than ourselves in~\cite{Saut12}.

\paragraph{Organization of the paper.}
The present paper is organized as follows.
In Section~\ref{sec:fullEuler}, we introduce the non-dimensionalized full Euler equations describing the evolution of the two-fluid system with a rigid-lid we consider. We roughly describe in Theorem~\ref{th:WPEuler} its well-posedness result, obtained by Lannes in~\cite{Lannes13} (for a flat topography).
Section~\ref{sec:GNGN} is dedicated to the construction and justification (in the sense of consistency) of the Green-Naghdi models. This result, in dimension $d=2$, and allowing non-flat topography, is new to our knowledge. Several lower order models, for which stronger results have been recently obtained, are shown to descend directly from our Green-Naghdi system, and are described in Section~\ref{sec:lowerorder}. More precisely:
\begin{itemize}
\item Section~\ref{sec:shallowwater}: The shallow water (Saint Venant) model, introduced in~\cite{BonaLannesSaut08} and studied in details in~\cite{GuyenneLannesSaut10};
\item Section~\ref{sec:Serre}: A very recent Green-Naghdi model in the Camassa-Holm regime, introduced and rigorously justified in~\cite{DucheneIsrawiTalhouk};
\item Section~\ref{sec:Boussinesq}: Boussinesq models, whose study follows from results in~\cite{BonaColinLannes05,Duchene11a}, and that we adapt to our case;
\item Section~\ref{sec:scalar}: Unidirectional (scalar) models generalizing the classical Korteweg-de Vries equation, whose rigorous justification  has been investigated in~\cite{Duchene13}.
\end{itemize}

\paragraph{Notations.}
 In the following, $C_0$ denotes any nonnegative constant whose exact expression is of no importance. The notation $a\lesssim b$ means that 
 $a\leq C_0 b$.\\
 We denote by $C(\lambda_1, \lambda_2,\dots)$ a nonnegative constant depending on the parameters
 $\lambda_1$, $\lambda_2$,\dots and whose dependence on the $\lambda_j$ is always assumed to be nondecreasing.\\
 Let $p$ be any constant with $1\leq p< \infty$. We denote $L^p=L^p(\RR^d)$ the space of all Lebesgue-measurable functions
 $f$ with the standard norm 
 \[\big\vert f \big\vert_{L^p}=\Big(\int_{\RR^d}\vert f(X)\vert^p dX\Big)^{1/p}<\infty.\]
  The real inner product of any functions $f_1$
 and $f_2$ in the Hilbert space $L^2(\RR^d)$ is denoted by
\[
 \big(\ f_1\ ,\ f_2\ \big)\ =\ \int_{\RR^d}f_1(X)f_2(X) dX.
 \]
 The space $L^\infty=L^\infty(\RR^d)$ consists of all essentially bounded, Lebesgue-measurable functions
 $f$ with the norm
\[
 \big\vert f\big\vert_{L^\infty}\ =\  \esssup_{X\in\RR^d} \vert f(X)\vert\ <\ \infty\ .
\]
 For any real constant $s\geq0$, $H^s=H^s(\RR^d)$ denotes the Sobolev space of all tempered
 distributions $f$ with the norm $\vert f\vert_{H^s}=\vert \Lambda^s f\vert_2 < \infty$, where $\Lambda$
 is the pseudo-differential operator $\Lambda=(1-\Delta)^{1/2}$.\\
For convenience, we will make use of the following notation for given $h_0,s,t_0\geq0$:
 \[ M(s) \ = \ C(\frac{1}{h_0},\big\vert \zeta \big\vert_{H^{\max(s,t_0+1)}},\big\vert b \big\vert_{H^{\max(s,t_0+1)}}) \ .\]
For a vector-valued function $F=(f_1,\dots f_n)^\top$, we write $F\in L^p(\RR^d)^n$ (resp. $F\in H^s(\RR^d)^n$) if each of the components $f_i\in L^p(\RR^d)$ (resp. $f_i\in H^s(\RR^d)$). The function spaces are endowed with canonical norms:
\[ \big\vert F \big\vert _{L^p} \ = \ \sum_{i=1}^n \big\vert f_i \big\vert_{L^p} \qquad \text{ and } \qquad \big\vert F \big\vert _{H^s} \ = \ \sum_{i=1}^n \big\vert f_i \big\vert_{H^s} \ .\]
 For any functions $u=u(t,X)$ and $v(t,X)$
 defined on $ [0,T)\times \RR^d$ with $T>0$, we denote the inner product, the $L^p$-norm and especially
 the $L^2$-norm, as well as the Sobolev norm,
 with respect to the spatial variable $X$, by $\big(u,v\big)=\big(u(t,\cdot),v(t,\cdot)\big)$, $\big\vert u \big\vert_{L^p}=\big\vert u(t,\cdot)\big\vert_{L^p}$, and $ \vert u \vert_{H^s}=\vert u(t,\cdot)\vert_{H^s}$, respectively.\\
  We denote $L^\infty([0,T);H^s)$ the space of functions such that $u(t,\cdot)$ is controlled in $H^s$, uniformly for $t\in[0,T)$:
 \[\big\Vert u\big\Vert_{L^\infty([0,T);H^s)} \ = \ \esssup_{t\in[0,T)}\vert u(t,\cdot)\vert_{H^s} \ < \ \infty.\]
 Finally, $C^k(\RR^d)$ denote the space of
 $k$-times continuously differentiable functions.
  \medskip

We conclude this section by the nomenclature that we use to describe the different regimes that appear in the present work. A regime is defined through restrictions on the set of admissible dimensionless parameters of the system, which are precisely defined in~\eqref{parameters}, below.
\begin{Definition}[Regimes] \label{Def:Regimes}
We designate by {\em shallow water regime} the set of parameters
\[
\P_{SW}  \equiv  \big\{ (\mu,\epsilon,\delta,\gamma),\ 0 <  \mu  \leq  \mu_{\max}, \ 0  \leq  \epsilon  \leq  1, \ \delta_{\min} \leq \delta \leq  \delta_{\max}, \ 0 \leq \gamma < 1  \big\},\]
with fixed $0<\mu_{\max},\delta_{\min},\delta_{\max}<\infty$.
We designate by {\em Camassa-Holm} regime (see~\cite{ConstantinLannes09}) the set
\[
\P_{CH} \ \equiv \ \P_{SW} \ \cap\ \big\{ (\mu,\epsilon,\delta,\gamma),\ 0 \ \leq \ \epsilon \ \leq \ M\sqrt{\mu} \ \big\},\]
and by {\em long wave regime} the set 
\[
\P_{LW} \ \equiv \ \P_{SW} \ \cap\ \big\{ (\mu,\epsilon,\delta,\gamma),\ 0 \ \leq \ \epsilon \ \leq \ M\mu \ \big\},\]
with some fixed $M\in(0,\infty)$.

Additionally, unless otherwise indicated, it is assumed that $\Bo^{-1}\in[0,\Bo_{\min}^{-1}]$ and $\beta\in[0,1]$. The dependency of the constants on $\Bo_{\min}^{-1}$ is not displayed ($\Bo_{\min}^{-1}\leq1$ in any oceanographic application).
\end{Definition}

\section{The full Euler system}\label{sec:fullEuler}
\subsection{Construction of the full Euler system}
The system we study consists in two layers of immiscible, homogeneous, ideal, incompressible fluid under the only influence of gravity. The two layers are infinite in the horizontal dimension, and delimited above by a flat rigid lid and below by a non-necessarily flat bottom. The derivation of the governing equations of such a system is not new. We briefly recall it below, and refer to~\cite{BonaLannesSaut08,Anh09,Duchene13} for more details.

\begin{figure}[!htb]
\centering
 \includegraphics[width=\textwidth]{./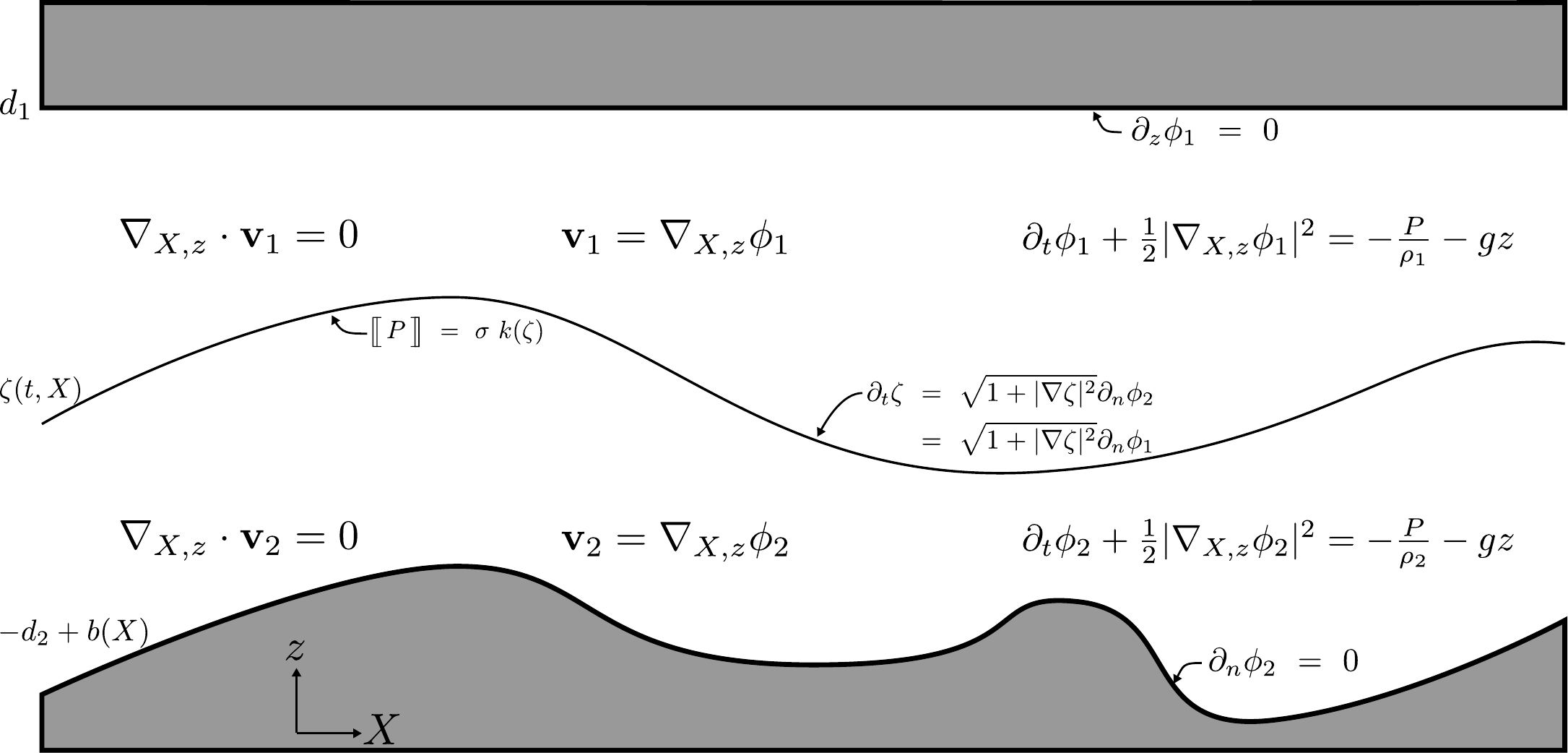}
 \caption{Sketch of the domain and governing equations}
\label{fig:SketchOfDomain}
\end{figure}

We assume that the interface and bottom are given as the graph of a function (resp. $\zeta(t,X)$ and $b(X)$) which expresses the deviation from their rest position (resp. $(X,0)$,$(X,-d_2)$) at the spatial coordinate $X\in \RR^d$ ($d=1$ or $d=2$) and at time $t$. 
Therefore, at each time $t\ge 0$, the domains of the upper and lower fluid (denoted, respectively, $\Omega_1^t$ and $\Omega_2^t$), are given by
\begin{align*}
 \Omega_1^t \ &= \ \{\ (X,z)\in\RR^d\times\RR, \quad \zeta(t,X)\ \leq\ z\ \leq \ d_1\ \}, \\
 \Omega_2^t \ &= \ \{\ (X,z)\in\RR^d\times\RR, \quad -d_2+b(X) \  \leq\ z\ \leq \ \zeta(t,X)\ \}.
\end{align*}
We assume that the two domains are strictly connected, that is there exists $h>0$ such that
\[ d_1- \zeta(t,X) \geq h>0, \quad \text{and} \quad d_2+\zeta(t,X)-b(X)\geq h>0.\]

We denote by $(\rho_1,{\mathbf v}_1)$ and $(\rho_2,{\mathbf v}_2)$ the mass density and velocity fields of, respectively, the upper and  the lower fluid. The two fluids are assumed to be homogeneous and incompressible, so that the mass densities $\rho_1,\ \rho_2$ are constant, and the velocity fields ${\mathbf v}_1,\ {\mathbf v}_2$ are divergence free.

Here and thereafter, we use the notation $\nabla_{X,z}$ to designate  the gradient operator with respect to the variable $(X, z)$, while the $\nabla$ and $\Delta$ simply denote the gradient and the Laplacian operators with respect to the variable $X$.

As we assume the flows to be irrotational, one can express the velocity field as gradients of a potential: ${\mathbf v}_i=\nabla_{X,z}\phi_i$ $(i=1,2)$ , and the velocity potentials satisfy Laplace's equation
\[\Delta \phi_i \ + \ \partial_z^2 \phi_i \ = \ 0  \quad (i=1,2).\]

 The fluids being ideal, they satisfy the Euler equations. Integrating the momentum equations yields Bernoulli equations, written in terms of the velocity potentials:
 \[   \partial_t \phi_i+\frac{1}{2} |\nabla_{X,z} \phi_i|^2=-\frac{P}{\rho_i}-gz \quad \text{  in }\Omega^t_i \quad (i=1,2),\]
 where $P$ denotes the pressure inside the fluid.

 From the assumption that no fluid particle crosses the surface, the bottom or the interface, one deduces kinematic boundary conditions, and the set of equations is closed by the continuity of the stress tensor at the interface, which reads
\[  \llbracket P(t,X) \rrbracket \ \equiv \ \lim\limits_{\varepsilon\to 0} \Big( \ P(t,X,\zeta(t,X)+\varepsilon) \ - \ P(t,X,\zeta(t,X)-\varepsilon) \ \Big) \ = \ -\sigma k\big(\zeta(t,X)\big),\]
where $ k(\zeta)=-\nabla \cdot \Big(\frac1{\sqrt{1+|\nabla\zeta|^2}}\nabla\zeta\Big)$ denotes the mean curvature of the interface, and $\sigma$ is the surface tension coefficient.

Altogether, the governing equations of our problem are given by the following
  \begin{equation}  \label{eqn:EulerComplet}
\left\{\begin{array}{ll}
         \Delta \phi_i \ + \ \partial_z^2 \phi_i \ = \ 0 & \mbox{ in }\Omega^t_i, \ i=1,2,\\
         \partial_t \phi_i+\frac{1}{2} |\nabla_{X,z} \phi_i|^2=-\frac{P}{\rho_i}-gz & \mbox{ in }\Omega^t_i, \ i=1,2, \\
         \partial_{z}\phi_1 \ = \ 0  & \mbox{ on } \Gamma_{\text{top}}\equiv\{(X,z),z=d_1)\}, \\
         \partial_t \zeta  \ = \ \sqrt{1+|\nabla\zeta|^2}\partial_{n}\phi_1 \ = \ \sqrt{1+|\nabla\zeta|^2}\partial_{n}\phi_2  & \mbox{ on } \Gamma \equiv\{(X,z),z=\zeta(t,X)\},\\
         \partial_{n}\phi_2  \ = \ 0 &  \mbox{ on } \Gamma_{\text{bot}}\equiv\{(X,z),z=-d_2+b(X)\}, \\
         \llbracket P(t,X) \rrbracket  \ = \ -\sigma k(\zeta) & \mbox{ on } \Gamma,
         \end{array}
\right.
\end{equation}
where $n$ denotes the unit upward normal vector at the surface at stake.
\bigskip

The next step consists in {\em nondimensionalizing the system}.
Thanks to an appropriate scaling, the two-layer full Euler system~\eqref{eqn:EulerComplet} can be written in dimensionless form.
The study of the linearized system (see~\cite{Lannes13} for example), which can be solved explicitly, leads to a well-adapted rescaling.
\medskip

Let $a$ (resp. $a_b$) be the maximum amplitude of the deformation of the interface. We denote by $\lambda$ a characteristic horizontal length (that we assume to be identical in any of the directions if $d=2$; see~\cite{Lannes} for a treatment of the anisotropic case when $d=2$), say the wavelength of the interface. Then the typical velocity of small propagating internal waves (or wave celerity) is given by
\[c_0 \ = \ \sqrt{g\frac{(\rho_2-\rho_1) d_1 d_2}{\rho_2 d_1+\rho_1 d_2}}.\]
Consequently, we introduce the dimensionless variables\footnote{We choose $d_1$ as the reference vertical length. This choice is harmless as we assume in the following that the two layers of fluid have comparable depth: the depth ratio $\delta$ does not approach zero or infinity. 
}
\[\begin{array}{cccc}
 \t z \ \equiv\  \dfrac{z}{d_1}, \quad\quad & \t X\ \equiv \ \dfrac{X}{\lambda}, \quad\quad & \t t\ \equiv\ \dfrac{c_0}{\lambda}t,
\end{array}\]
the dimensionless unknowns
\[\begin{array}{cc}
 \t{\zeta}(\t t,\t X)\ \equiv\ \dfrac{\zeta(t,X)}{a},  \quad  \t{b}(\t X)\ \equiv\ \dfrac{b(X)}{a_b}, \quad\quad& \t{\phi_i}(\t t,\t X,\t z)\ \equiv\ \dfrac{d_1}{a\lambda c_0}\phi_i(t,X,z) \quad (i=1,2),
\end{array}\]
as well as the following dimensionless parameters
\begin{equation}\label{parameters}
 \gamma\ =\ \dfrac{\rho_1}{\rho_2}, \quad \epsilon\ \equiv\ \dfrac{a}{d_1}, \quad\beta\ \equiv\ \dfrac{a_b}{d_1},\quad   \mu\ \equiv\ \dfrac{d_1^2}{\lambda^2},  \quad \delta\ \equiv \ \dfrac{d_1}{d_2}, \quad \Bo\ =\ \dfrac{g(\rho_2-\rho_1)\lambda^2}{\sigma}.
\end{equation}
\bigskip

We conclude by remarking that the system can be reduced into two evolution equations coupling Zakharov's canonical variables~\cite{Zakharov68,CraigSulem93}, namely  (withdrawing the tildes for the sake of readability)
the deformation of the free interface from its rest position, $\zeta$, and the trace of the dimensionless upper potential at the interface, $\psi$, defined as follows:
\[ \psi \ \equiv \ \phi_1(t,X,\zeta(t,X)).\]
Indeed, $\phi_1$ and $\phi_2$ are uniquely deduced from $(\zeta,\psi)$ as solutions of the following Laplace's problems:
\begin{align}
\label{eqn:Laplace1} &\left\{
\begin{array}{ll}
 \left(\ \mu\Delta \ +\  \partial_z^2\ \right)\ \phi_1=0 & \mbox{ in } \Omega_1\equiv \{(X,z)\in \RR^{d+1},\ \epsilon{\zeta}(X)<z<1\}, \\
\partial_n \phi_1 =0  & \mbox{ on } \Gamma_{\text{top}}\equiv \{(X,z)\in \RR^{d+1},\ z=1\}, \\
 \phi_1 =\psi & \mbox{ on } \Gamma\equiv \{(X,z)\in \RR^{d+1},\ z=\epsilon \zeta\},
\end{array}
\right.\\
\label{eqn:Laplace2}&\left\{
\begin{array}{ll}
 \left(\ \mu\Delta\ + \ \partial_z^2\ \right)\ \phi_2=0 & \mbox{ in } \Omega_2\equiv\{(X,z)\in \RR^{d+1},\ -\frac{1}{\delta}+\beta b(X)<z<\epsilon\zeta(X)\}, \\
\partial_{n}\phi_2 = \partial_{n}\phi_1 & \mbox{ on } \Gamma, \\
 \partial_{n}\phi_2 =0 & \mbox{ on } \Gamma_{\text{bot}}\equiv \{(X,z)\in \RR^{d+1},\ z=-\frac{1}{\delta}+\beta b(X)\}.
\end{array}
\right.
\end{align}
More precisely, we define the so-called Dirichlet-Neumann operators.
\begin{Definition}[Dirichlet-Neumann operators]
Let $\zeta,b\in H^{t_0+1}(\RR^d)$, $t_0>d/2$, such that there exists $h_0>0$ with
$h_1 \ \equiv\  1-\epsilon\zeta  \geq h_0>0$ and $h_2 \ \equiv \ \frac1\delta +\epsilon \zeta-\beta b\geq h_0>0$, and let $\psi\in L^2_{\rm loc}(\RR^d),\nabla \psi\in H^{1/2}(\RR^d)$.
 Then we define
  \begin{align*}
 G^{\mu}\psi \ \equiv\  &G^{\mu}[\epsilon\zeta]\psi \ \equiv \ \sqrt{1+\mu|\epsilon\nabla\zeta|^2}\big(\partial_n \phi_1 \big)\id{z=\epsilon\zeta}  \ = \ -\mu\epsilon(\nabla\zeta)\cdot (\nabla\phi_1)\id{z=\epsilon\zeta}+(\partial_z\phi_1)\id{z=\epsilon\zeta},\\
H^{\mu,\delta}\psi \ \equiv\   &H^{\mu,\delta}[\epsilon\zeta,\beta b]\psi\ \equiv\  \big(\phi_2\big)\id{z=\epsilon\zeta} \ = \ \phi_2(t,X,\zeta(t,X)),
\end{align*}
where $\phi_1$ and $\phi_2$ are uniquely defined (up to a constant for $\phi_2$) as the solutions in $H^2(\RR^d)$ of the Laplace's problems~\eqref{eqn:Laplace1}--\eqref{eqn:Laplace2}.
\end{Definition}
The existence and uniqueness of a solution to~\eqref{eqn:Laplace1}--\eqref{eqn:Laplace2}, and therefore the well-posedness of the Dirichlet-Neumann operators follow from classical arguments detailed, for example, in~\cite{Lannes}.
\medskip

Using the above definition, and after straightforward computations, one can rewrite the nondimensionalized version of~\eqref{eqn:EulerComplet} as a simple system of two coupled evolution equations, namely
\begin{equation}\label{eqn:EulerCompletAdim}
\left\{ \begin{array}{l}
\displaystyle\partial_{ t}{\zeta} \ -\ \frac{1}{\mu}G^{\mu}\psi\ =\ 0,  \\ \\
\displaystyle\partial_{ t}\Big(\nabla H^{\mu,\delta}\psi-\gamma \nabla{\psi} \Big)\ + \ (\gamma+\delta)\nabla{\zeta} \ + \ \frac{\epsilon}{2} \nabla\Big(|\nabla H^{\mu,\delta}\psi|^2 -\gamma |\nabla {\psi}|^2 \Big) \\
\displaystyle\hspace{7cm} = \ \mu\epsilon\nabla\N^{\mu,\delta}-\frac{\gamma+\delta}{\Bo}\frac{\nabla \big(k(\epsilon\sqrt\mu\zeta)\big)}{{\epsilon\sqrt\mu}} \ ,
\end{array}
\right.
\end{equation}
where we denote
\[  \N^{\mu,\delta} \ \equiv \ \dfrac{\big(\frac{1}{\mu}G^{\mu}\psi+\epsilon(\nabla{\zeta})\cdot (\nabla H^{\mu,\delta}\psi) \big)^2\ -\ \gamma\big(\frac{1}{\mu}G^{\mu}\psi+\epsilon(\nabla{\zeta})\cdot (\nabla{\psi}) \big)^2}{2(1+\mu|\epsilon\nabla{\zeta}|^2)}.
      \]
We will refer to~\eqref{eqn:EulerCompletAdim} as the {\em full Euler system}, and solutions of this system will be exact solutions of our problem.

\subsection{A well-posedness theorem on the full Euler system}\label{sec:WPfullEuler}
We mention here that Lannes~\cite{Lannes13} recently ensured that the Cauchy problem for~\eqref{eqn:EulerCompletAdim} (with a flat bottom: $\beta=0$) is locally well-posed in Sobolev spaces, with an existence time consistent with observations. Earlier results showed that the problem was ill-posed in the absence of surface tension, outside of the analytic framework. It was subsequently proved that taking into account the surface tension restores the local well-posedness of the equations, but with a very small existence time of the solution when the surface tension is small, which is the case in the oceanographic setting.

It has to be noted that none of the asymptotic models presented in the following sections (and as a matter of fact, no asymptotic model known to us) share the same property, and that the surface tension term could be withdrawn from the equations (by setting $\Bo^{-1}=0$) without hurting their well-posedness. The reason for this apparent paradox is that the positive role of surface tension is to regularize Kelvin-Helmholtz instabilities that appear at high frequencies, while the main part of the wave, which is captured by the asymptotic models, is located at low frequencies and is thus unaffected by surface tension.

In~\cite{Lannes13}, Lannes introduces a {\em stability criterion}, whose role is to ensure that the aforementioned frequency threshold  is high enough, and shows that, under this condition, the combined effect of surface tension and gravity is sufficient to control the regularity of the flow.

Somewhat more precisely, one has the following result; see~\cite[Theorems 5 and 6]{Lannes13} for the precise statements.
 \begin{Theorem} \label{th:WPEuler}
Let $\p\in\P_{SW}$ and the initial data $U^0\equiv (\zeta^0,\psi^0)^\top$ satisfy the following:
\begin{enumerate}
\item $U^0$ belongs to an energy space of sufficiently smooth, bounded functions (in particular, the following is required: $(\zeta^0,\nabla \psi^0)^\top\in H^{9/2}(\RR^d)^{d+1}$);
\item $\zeta^0$ satisfies the non-vanishing depth condition: $\exists h_0>0:\  \min\{ 1-\epsilon\zeta^0 , \delta^{-1}+\epsilon\zeta^0\}\geq h_0$;
\item A stability criterion is satisfied, which can be roughly expressed by $\Upsilon\equiv \epsilon^{-2\varrho}\frac{\rho_1 g a^4}{d_1^2}\frac{1}{4\sigma}\frac{(\gamma+\delta)^2}{(1+\gamma)^6}$ is sufficiently small ($\varrho\in[0,1]$, fixed).
\end{enumerate}
 Then there exists a unique solution to~\eqref{eqn:EulerCompletAdim} (with flat bottom: $\beta=0$) with initial data $U\id{t=0}\equiv U^0$, bounded in the same energy space (no loss of derivatives). The flow is continuous with respect to time, and defined for  $t\in[0,\epsilon^{-\varrho}T]$, where $T>0$ depends only on the quantities defined through the three above conditions, and in particular can be chosen independent of the parameters $\p\in\P_{SW}$.
 \end{Theorem}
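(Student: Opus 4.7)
The plan is to cast system~\eqref{eqn:EulerCompletAdim} as a quasilinear evolution equation in a well-chosen pair of unknowns, derive a priori energy estimates that stay uniform on the time scale $[0,\epsilon^{-\varrho}T]$, and construct solutions by an iterative scheme. I would work with $(\zeta,w)$ where $w\equiv\nabla H^{\mu,\delta}\psi-\gamma\nabla\psi$ is the natural shear-type quantity appearing in the second equation of~\eqref{eqn:EulerCompletAdim}. The first step is a careful paralinearization of the Dirichlet-Neumann operator $G^{\mu}[\epsilon\zeta]$ and of the interface-trace operator $H^{\mu,\delta}[\epsilon\zeta,0]$, through shape derivative formulas along the lines developed in the one-layer setting~\cite{Lannes}. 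This yields an expression of the principal symbol of the linearization around a reference state $(\underline\zeta,\underline w)$.

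The symbol analysis is the conceptual heart of the argument. On the low frequency range, one recognizes the usual hyperbolic symbol governing propagation of internal waves at speed $\mathcal{O}(\sqrt{\gamma+\delta})$; on the high frequency range, the quadratic dependence on $\underline w$ of the linearized Bernoulli-type term produces a negative contribution of order $\epsilon^2|\underline w|^2|\xi|^2$ competing with the positive gravity/surface-tension contribution of order $(\gamma+\delta)|\xi|^2+\Bo^{-1}|\xi|^4$. In the absence of surface tension, this is the classical Kelvin--Helmholtz obstruction to well-posedness in Sobolev spaces. The stability criterion $\Upsilon\ll 1$ is exactly the quantitative statement ensuring that gravity and (small) surface tension still dominate this destabilizing term pointwise in frequency, provided one restricts to times $t\leq \epsilon^{-\varrho}T$.

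Next, I would introduce an energy functional $E^N(U)$ of order $N\geq 9/2$, of the form
\[ E^N(U)\ =\ \sum_{|\alpha|\leq N}\Big((\gamma+\delta)\vert\Lambda^\alpha \zeta\vert_{L^2}^2+\Bo^{-1}\vert\Lambda^\alpha\nabla\zeta\vert_{L^2}^2+\big(\mathfrak{I}^{\mu,\delta}[\epsilon\zeta]\Lambda^\alpha w,\Lambda^\alpha w\big)\Big), \]
where $\mathfrak{I}^{\mu,\delta}$ is a positive self-adjoint operator extracted from the paralinearization of $(G^\mu,H^{\mu,\delta})$ acting on $w$. Under the non-vanishing depth condition and the stability criterion, $E^N$ is coercive and controls $\vert\zeta\vert_{H^N}^2+\vert w\vert_{H^{N-1/2}}^2$ up to $\Bo^{-1}\vert\zeta\vert_{H^{N+1}}^2$. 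Commuting $\Lambda^\alpha$ past the nonlinearities and using the (well-established) tame estimates on $G^\mu$ and $H^{\mu,\delta}$ and their shape derivatives, one obtains
\[ \frac{d}{dt}E^N(U)\ \leq\ \epsilon\, C\big(E^N(U),h_0^{-1},\Upsilon\big). \]
A continuity argument then propagates the control on $[0,\epsilon^{-\varrho}T]$, for $T$ depending only on the quantities fixed by the hypotheses, as long as the non-vanishing depth condition and the stability criterion remain satisfied; both are open conditions and survive by a bootstrap on a possibly smaller $T$.

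The main obstacle is this third step: one must commute $\Lambda^N$ past the highly nonlinear, nonlocal operators $G^\mu$, $H^{\mu,\delta}$ and the curvature term $k(\epsilon\sqrt\mu\zeta)$ without losing derivatives, and verify that every resulting commutator is absorbable into $C(E^N)$ with the correct power of $\epsilon$. This forces the identification of a good unknown in the spirit of Alinhac, cancelling the most singular commutators arising from the paralinearization of $G^\mu$. Once the a priori estimate is obtained, existence follows by a standard Friedrichs mollification of the equation, and uniqueness by a lower regularity stability estimate for the difference of two solutions with the same initial datum, using exactly the same energy machinery.
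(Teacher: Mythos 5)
The paper does not prove this theorem itself: it is quoted from Lannes~\cite{Lannes13} (Theorems 5 and 6 therein), and the statement given here is only a rough paraphrase of that result. Your proposal is therefore not competing with an in-paper argument but with the cited one, and at the level of strategy it does follow the same route as~\cite{Lannes13}: quasilinearization of~\eqref{eqn:EulerCompletAdim} via shape derivatives/paralinearization of the operators $G^{\mu}$ and $H^{\mu,\delta}$, identification of the Kelvin--Helmholtz destabilizing term quadratic in the velocity shear competing with gravity and surface tension at high frequency, an energy built on the (positive, under the stability criterion) symmetrized principal part, Alinhac-type good unknowns to avoid derivative losses, and a bootstrap giving a large-time existence result. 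So the overall picture is right and faithful to the actual proof.

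However, as a proof it is a program rather than an argument: every genuinely hard step is named but not carried out. Concretely, (i) you do not perform the symbol computation that turns the qualitative statement ``gravity plus small surface tension dominates the $\epsilon^2\vert\underline w\vert^2\vert\xi\vert^2$ term'' into the specific quantitative criterion on $\Upsilon$ appearing in the statement, nor do you explain how the exponent $\varrho$ enters (in~\cite{Lannes13} the factor $\epsilon^{-2\varrho}$ in the criterion is precisely what buys the existence time $\epsilon^{-\varrho}T$; in your sketch the energy inequality $\frac{d}{dt}E^N\leq \epsilon\,C(E^N)$ would only give times $O(1/\epsilon)$ and the link between $\Upsilon$, $\varrho$ and $T$ is left unexplained); (ii) the theorem asserts that $T$ is independent of all parameters $\p\in\P_{SW}$, in particular uniform as $\mu\to0$, and this uniformity is the delicate point for the intended application to shallow-water asymptotics --- your energy functional and tame estimates are written without tracking the $\mu$-dependence of $G^{\mu}$, $H^{\mu,\delta}$ and of the norms, so the claimed parameter-independence is not justified; (iii) the coercivity of $E^N$ ``up to $\Bo^{-1}\vert\zeta\vert_{H^{N+1}}^2$'' and the absorption of all commutators with the correct powers of $\epsilon$ are exactly the content of the hard technical chapters of~\cite{Lannes13} and cannot be taken for granted. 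In short: correct blueprint, same method as the cited source, but with the decisive estimates asserted rather than proved.
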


\section{The Green-Naghdi/Green-Naghdi model}\label{sec:GNGN}
In the following, we construct Green-Naghdi type models for the system~\eqref{eqn:EulerCompletAdim}, that is asymptotic models with precision $\O(\mu^2)$, in the sense of consistency. As we shall see, and contrarily to earlier works, our construction relies only on asymptotic expansions which can be straightforwardly deduced from known results on the one-layer case. Thus we start by recalling below these results, which can be found in particular in~\cite{Lannes}. We then deduce equivalent asymptotic expansions in the bi-fluidic setting in Section~\ref{sec:Dev2}, and finally use these expansions to construct our asymptotic models in Section~\ref{sec:GN}.

\subsection{Asymptotic expansions in the water-wave case}\label{sec:Dev1}
 The proof of the following statements may be found in~\cite{Lannes} (with depth $D=1$, but the general case is obtained by straightforward change of variables). For simplicity, and without lack of generality, we set $\epsilon=\beta=1$ in this section.

\begin{Definition}[Dirichlet-Neumann operator]\label{defG}
Let $\zeta,b \in H^{t_0+1}(\RR^d)$, $t_0>d/2$, such that there exists $h_0>0$ with
$h \equiv D+\zeta-b \geq h_0>0$, and let $\psi\in L^2_{\rm loc}(\RR^d),\nabla \psi\in H^{1/2}(\RR^d)^d$.
 Then we define
\[
\G^{\mu,D}[\zeta,b]\psi \ \equiv \ \sqrt{1+\mu|\nabla\zeta|^2}\big(\partial_n \phi \big)\id{z=\zeta}   \ = \ -\mu(\nabla\zeta)\cdot (\nabla\phi)\id{z=\zeta}+(\partial_z\phi)\id{z=\zeta},
\]
where $\phi\equiv \phi^{\mu,D}[\zeta,b]\psi\in H^2$ is the unique solution to
\begin{equation}
\label{eqn:Laplace0} \left\{
\begin{array}{ll}
 \left(\ \mu\Delta \ +\  \partial_z^2\ \right)\ \phi=0 & \mbox{ in } \Omega\equiv \{(X,z)\in \RR^{d+1},\ -D+b(X)<z<\zeta(X)\}, \\
\partial_n \phi =0  & \mbox{ on } \Gamma_{\text{bot}}\equiv \{(X,z)\in \RR^{d+1},\ z=-D+b(X)\}, \\
 \phi =\psi & \mbox{ on } \Gamma\equiv \{(X,z)\in \RR^{d+1},\ z= \zeta\},
\end{array}\right.
\end{equation}
\end{Definition}

Let us now recall that the Dirichlet-Neumann operator may be equivalently defined through the {\em vertically averaged mean velocity}, thanks to the following Proposition.
\begin{Proposition}\label{prop:idGvsV}
Let $\zeta,b,\psi$ satisfy the assumptions of Definition~\ref{defG}.
Define
  \begin{align*}
\b\V(X) \ \equiv \ \frac1{h(X)}\int_{-D+b(X)}^{\zeta(X)} \nabla\phi(X,z)\ dz
\end{align*}
where $\phi\equiv \phi^{\mu,D}[\zeta,b]\psi \in H^2$ is the unique solution to~\eqref{eqn:Laplace0}.

Then one has the identity
\begin{equation}\label{idGvsV}
\G^{\mu,D}[\zeta,b]\psi \ = \ -\mu \nabla\cdot (\ h\ \b\V\ ).
\end{equation}
\end{Proposition}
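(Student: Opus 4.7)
The identity is essentially an integrated form of the Laplace equation, coupled with the bottom boundary condition. The plan is to expand $\nabla\cdot(h\bar\V)$ using Leibniz's rule to differentiate under the integral sign, then use the PDE satisfied by $\phi$ to rewrite the resulting integral as a boundary flux, and finally eliminate the bottom contribution via the no-flow condition.

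Concretely, starting from $h\,\bar\V(X) = \int_{-D+b(X)}^{\zeta(X)} \nabla\phi(X,z)\,dz$, Leibniz's rule yields
\begin{equation*}
\nabla\cdot(h\bar\V) \ = \ \int_{-D+b}^{\zeta} \Delta\phi(X,z)\,dz \ + \ (\nabla\phi)\id{z=\zeta}\cdot\nabla\zeta \ - \ (\nabla\phi)\id{z=-D+b}\cdot\nabla b.
\end{equation*}
The Laplace equation $\mu\Delta\phi+\partial_z^2\phi=0$ then gives
\begin{equation*}
\mu\int_{-D+b}^{\zeta} \Delta\phi\,dz \ = \ -\big[\partial_z\phi\big]_{z=-D+b}^{z=\zeta} \ = \ -(\partial_z\phi)\id{z=\zeta} \ + \ (\partial_z\phi)\id{z=-D+b}.
\end{equation*}

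Combining these two relations, I obtain
\begin{equation*}
-\mu\nabla\cdot(h\bar\V) \ = \ \Big((\partial_z\phi)\id{z=\zeta}-\mu(\nabla\phi)\id{z=\zeta}\cdot\nabla\zeta\Big) \ - \ \Big((\partial_z\phi)\id{z=-D+b}-\mu(\nabla\phi)\id{z=-D+b}\cdot\nabla b\Big).
\end{equation*}
The first bracket is precisely $\G^{\mu,D}[\zeta,b]\psi$ by Definition~\ref{defG}. The second bracket is the analogous $\mu$-scaled conormal derivative at the bottom, which vanishes on $\Gamma_{\mathrm{bot}}$ by the Neumann condition $\partial_n\phi=0$ appearing in~\eqref{eqn:Laplace0} (written in the anisotropic metric associated with the scaled Laplacian). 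This gives the claimed identity~\eqref{idGvsV}.

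The argument is essentially a one-line application of the divergence theorem in the strip $\Omega$, so no real obstacle arises; the only point deserving care is to verify that Leibniz differentiation under the integral is justified, which follows from the $H^2$-regularity of $\phi$ together with the $H^{t_0+1}$ regularity of $\zeta$ and $b$ (with $t_0>d/2$), and to check that the bottom boundary term in the chosen sign convention indeed reproduces the conormal form $\partial_z\phi-\mu\nabla b\cdot\nabla\phi$ appearing in the scaled Neumann condition.
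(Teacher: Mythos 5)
Your computation is correct and leads to the right identity: the Leibniz differentiation produces exactly the scaled conormal derivatives at the top and bottom, the top one is $\G^{\mu,D}[\zeta,b]\psi$ by Definition~\ref{defG}, and the bottom one vanishes because the condition $\partial_n\phi=0$ on $\Gamma_{\rm bot}$ is precisely the vanishing of $\partial_z\phi-\mu\nabla b\cdot\nabla\phi$ at $z=-D+b$. Your route differs from the paper's in its mechanics, though both are avatars of the divergence theorem. The paper works in weak form: it tests $\G^{\mu,D}[\zeta,b]\psi$ against $\varphi\in C_c^\infty(\RR^d)$, applies Green's identity in $\Omega$ (the bottom and lateral contributions dropping out by the Neumann condition and compact support), uses Fubini to pull out $\nabla\varphi$, and only at the end upgrades the distributional identity to a strong one using $\G^{\mu,D}[\zeta,b]\psi\in H^{1/2}(\RR^d)$. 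This sidesteps entirely the issue you correctly flag as the delicate point of your argument, namely justifying differentiation under an integral whose limits $\zeta(X)$ and $-D+b(X)$ are only $H^{t_0+1}$ (hence $C^1$) and whose integrand $\nabla\phi$ is only $H^1(\Omega)$: in your direct approach this requires an approximation or strip-flattening argument to make the pointwise Leibniz rule and the boundary traces legitimate, whereas in the weak formulation all derivatives land on the smooth test function. In exchange, your computation is more transparent: it exhibits explicitly the cancellation of the bottom conormal term and shows directly why the operator $\G^{\mu,D}$ appears, which the duality argument leaves somewhat implicit. Either proof is acceptable; if you keep the direct version, spell out the justification of the Leibniz step (e.g.\ by density of smooth functions in $H^2(\Omega)$ after transforming to a flat strip), since that is the only place where the regularity hypotheses are genuinely used.
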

\begin{proof}
This striking result is a consequence of a simple calculation, that we recall. Let $\varphi\in C_c^\infty(\RR^d)$ be a test function. Then one has
\begin{align*} 
\int_{\RR^d} \varphi \G^{\mu,D}[\zeta,b]\psi \ dX\ &= \ \int_{\Gamma} \varphi (\partial_n\phi)\ d\Gamma \ = \ \int_{\Omega} (\sqrt{\mu}\nabla \phi) \cdot (\sqrt{\mu}\nabla \varphi) \ d\Omega\\
& = \ \mu \ \int_{\RR^d}dX\ \nabla \varphi \cdot \int_{-D+b}^{\zeta}dz \nabla \phi \\ 
&= \ -\mu \ \int_{\RR^d} \varphi(X)\nabla ( h\ \b\V)\ dX,
\end{align*}
where we used Green's identity, and the Laplace's equation satisfied by~$\phi$. Since this result is valid for any test function $\varphi \in C_c^\infty(\RR^d)$, and as $\G^{\mu,D}[\zeta,b]\psi\in H^{1/2}(\RR^d)$, the identity~\eqref{idGvsV} holds in the strong sense.
\end{proof}

Let us conclude with the asymptotic expansion of the quantities defined above. Here and in the following, we denote, for convenience,\footnote{In order to be completely rigorous, one should take into account the dependence with respect to the parameter $D$ here, and $\delta$ in the subsequent sections. However, this dependence is harmless as we assume that $\delta$ does not approach zero or infinity: $\delta\in[\delta_{\min},\delta_{\max}]$; see~\cite{Lannes13} for example.}
\[ M(s) \ = \ C(h_0^{-1},\big\vert \zeta \big\vert_{H^{\max(s,t_0+1)}},\big\vert b \big\vert_{H^{\max(s,t_0+1)}}) \ .\]
\begin{Proposition}\label{prop:devG}
Let $\zeta,b \in H^{t_0+2}\cap H^{s+4}(\RR^d)$, $t_0>d/2,s\geq0$, such that there exists $h_0>0$ with
$h \equiv D+\zeta-b \geq h_0>0$, and let $\psi\in L^2_{\rm loc}(\RR^d),\nabla \psi\in H^{s+4}(\RR^d)^d$.
Then
\begin{align}
\big\vert \b \V \ - \ \nabla \psi \big\vert_{H^s} \ &\leq \ \mu \ M(s+2) \big\vert \nabla\psi \big\vert_{H^{s+2}}\ ,\\
\big\vert \b \V \ - \ \nabla \psi \ + \ \mu \T[h,b]\nabla\psi \big\vert_{H^s} \ &\leq \ \mu^2\  M(s+4) \big\vert \nabla\psi \big\vert_{H^{s+4}} \ ,
\end{align}
with
\begin{equation}\label{defT}
\T[h,b]V \equiv \frac{-1}{3h}\nabla(h^3\nabla\cdot V)+\frac{1}{2h}\big[\nabla(h^2\nabla b\cdot V)-h^2\nabla b \nabla\cdot V\big]+\nabla b\nabla b\cdot V\ .
\end{equation}

It follows straightforwardly from~\eqref{idGvsV} that if $(\zeta,b ,\nabla \psi)^\top\in H^{s+5}(\RR^d)^{d+2}$, then
\begin{align}
\big\vert \frac1\mu \G^{\mu}[\zeta,b]\psi \ + \ \nabla\cdot (h\nabla \psi) \big\vert_{H^s} \ &\leq \ \mu M(s+3) \big\vert \nabla\psi \big\vert_{H^{s+3}} \ ,\\
\big\vert \frac1\mu \G^{\mu}[\zeta,b]\psi \ + \ \nabla\cdot (h\nabla \psi) \ - \ \mu \nabla\cdot (h\T[h,b]\nabla\psi )\big\vert_{H^s} \ &\leq \ \mu^2 M(s+5) \big\vert \nabla\psi \big\vert_{H^{s+5}} \ .
\end{align}
\end{Proposition}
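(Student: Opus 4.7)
The plan is to reduce both expansions of this proposition to the corresponding expansions of the averaged velocity $\b\V$ in the normalized setting $D=1$, and then invoke the asymptotic expansions stated in~\cite{Lannes}. The key observation is that Proposition~\ref{prop:idGvsV} already gives the identity $\frac{1}{\mu}\G^{\mu,D}[\zeta,b]\psi = -\nabla\cdot(h\b\V)$, so the two estimates on $\G^{\mu,D}$ follow immediately from those on $\b\V$ by applying the map $V\mapsto -\nabla\cdot(hV)$. Since $h=D+\zeta-b$ is controlled in $H^{t_0+1}$, this map is bounded from $H^{s+1}$ into $H^s$ by tame product estimates, which accounts for the loss of one additional derivative on $\nabla\psi$ (from $H^{s+2}$ to $H^{s+3}$ in the first inequality, and from $H^{s+4}$ to $H^{s+5}$ in the second).

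To reduce further to the case $D=1$ addressed in~\cite{Lannes}, I would perform the vertical rescaling $\tilde z = z/D$, $\tilde\zeta = \zeta/D$, $\tilde b = b/D$, $\tilde h = h/D$. Under this change of variables, the Laplace problem~\eqref{eqn:Laplace0} becomes the analogous problem with unit depth and rescaled dispersion parameter $\tilde\mu = \mu D^2$. The standing assumption $\delta\in[\delta_{\min},\delta_{\max}]$ (see the footnote) ensures that $D=1/\delta$ is bounded above and below, hence this rescaling alters all constants only by bounded multiplicative factors, which can be absorbed into $M(\cdot)$. The operator $\T[h,b]$ defined in~\eqref{defT} transforms covariantly under the scaling, so the general-$D$ expansion is obtained from its $D=1$ counterpart by direct substitution.

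The expansion of $\b\V$ itself, in the normalized setting, is derived from the formal series $\phi = \psi + \mu\phi^{(1)} + \mu^2\phi^{(2)} + \cdots$ substituted in the Laplace problem. At leading order the equation degenerates to $\partial_z^2\phi = 0$, which combined with $\phi\id{z=\zeta}=\psi$ and the Neumann condition at the bottom (reducing in the limit $\mu\to 0$ to $\partial_z\phi=0$) forces $\phi\equiv\psi$, giving $\b\V \approx \nabla\psi$. The corrector $\phi^{(1)}$ then solves $\partial_z^2\phi^{(1)}=-\Delta\psi$ with compatible boundary conditions, and two explicit vertical integrations show that the vertical average of $\nabla\phi^{(1)}$ is precisely $-\T[h,b]\nabla\psi$, matching~\eqref{defT}. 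The main technical difficulty, which is the content of~\cite{Lannes} and I would not redo, lies in the sharp Sobolev estimates on the remainders $\phi-\psi$ and $\phi-\psi-\mu\phi^{(1)}$: these require flattening the domain onto a fixed strip, elliptic regularity for the resulting variable-coefficient problem, and tame product estimates to realise the precise dependence $M(\cdot)\,|\nabla\psi|_{H^{s+2}}$ (resp. $H^{s+4}$) on the data.
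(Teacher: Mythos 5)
Your proposal follows essentially the same route as the paper, which gives no independent proof: it cites \cite{Lannes} for the $D=1$ expansions of $\b\V$, notes that the general depth $D$ follows by a straightforward change of variables (harmless since $\delta\in[\delta_{\min},\delta_{\max}]$), and derives the estimates on $\frac1\mu\G^{\mu,D}[\zeta,b]\psi$ directly from the identity \eqref{idGvsV} via product estimates, exactly as you do. Your additional remarks on the covariance of $\T[h,b]$ under the vertical rescaling and on the formal construction of the corrector are correct and merely flesh out what the paper leaves implicit.
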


\subsection{Asymptotic expansions in the bi-fluidic case}\label{sec:Dev2}
Our specific operators may be deduced from the classical Dirichlet-Neumann operator used in the water-wave problem, and Defined in Definition~\ref{defG}. Thus the following results are easily deduced from the ones of the previous section.

Let us first define $\b u_1$ (resp. $\b u_2$) the vertically averaged mean velocity of the upper layer (resp. lower layer):
\begin{Definition}\label{defU1U2}
Let $\zeta,b \in H^{t_0+1}(\RR^d)$, $t_0>d/2$, such that there exists $h_0>0$ with
$h_1 \equiv 1-\epsilon \zeta\geq h_0>0$ and $h_2\equiv \frac1\delta +\epsilon \zeta-\beta b\geq h_0>0$, and let $\psi\in L^2_{\rm loc}(\RR^d),\nabla \psi\in H^{1/2}(\RR^d)^d$. Then we define
\begin{align*}
\b u_1(t,X) \ &= \ \frac{1}{1-\epsilon\zeta(X)}\int_{\epsilon\zeta(X)}^{1} \nabla \phi_1(X,z) \ dz ,\\
  \b u_2(t,X) \ &= \ \frac{1}{\frac1\delta+\epsilon\zeta(X)-\beta b(X)}\int_{-\frac1\delta+\beta  b(X)}^{\epsilon\zeta(X)} \nabla\phi_2(X,z) \ dz.
  \end{align*}
where $\phi_1$ and $\phi_2$ are uniquely defined (up to a constant for $\phi_2$) as the solutions in $H^2(\RR^d)$ of the Laplace's problems~\eqref{eqn:Laplace1}--\eqref{eqn:Laplace2}.
\end{Definition}
\begin{Proposition}\label{prop:idGvsU1U2}
Let $\zeta,b ,\psi$ satisfy the hypothesis of Definition~\ref{defU1U2}.
Then one has the identity
\begin{equation}\label{idGvsU1U2}
G^{\mu}[\epsilon\zeta]\psi \ = \ \mu \nabla\cdot (\ h_1\ \b u_1\ ) \ = \ -\mu \nabla\cdot (\ h_2\ \b u_2\ ).
\end{equation}
\end{Proposition}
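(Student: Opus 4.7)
The identity is the exact bi-fluidic analog of Proposition~\ref{prop:idGvsV}, so the cleanest strategy is to reduce each of the two equalities in~\eqref{idGvsU1U2} to a single application of that one-layer result. The idea is that, although $\phi_1$ and $\phi_2$ are coupled through the transmission condition at the interface, each individually solves a Laplace problem of the form~\eqref{eqn:Laplace0}, once one identifies the appropriate Dirichlet datum and orientation.

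For the second equality, I would observe that $\phi_2$ is, by the very definition of $H^{\mu,\delta}\psi$, the unique solution (up to an additive constant) of the Dirichlet-Neumann problem~\eqref{eqn:Laplace0} with depth $D=1/\delta$, bottom topography $\beta b$, surface deformation $\epsilon\zeta$, and Dirichlet datum $H^{\mu,\delta}\psi$ prescribed at $z=\epsilon\zeta$. Proposition~\ref{prop:idGvsV} immediately yields
\[\G^{\mu,1/\delta}[\epsilon\zeta,\beta b](H^{\mu,\delta}\psi) \ = \ -\mu\nabla\cdot(h_2\b u_2),\]
and the left-hand side equals $-\mu\epsilon\nabla\zeta\cdot\nabla\phi_2\id{z=\epsilon\zeta}+\partial_z\phi_2\id{z=\epsilon\zeta}$, which is $G^{\mu}\psi$ by the continuity of the normal velocity at the interface enforced in~\eqref{eqn:Laplace2}. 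For the first equality, I would first perform the vertical flip $z\mapsto 1-z$ in~\eqref{eqn:Laplace1} to match the orientation convention of~\eqref{eqn:Laplace0}: the rigid lid $\{z=1\}$ becomes a flat Neumann bottom, and the interface becomes an upper Dirichlet surface carrying the datum $\psi$. Applying Proposition~\ref{prop:idGvsV} with $D=1$, no topography, and surface deformation $-\epsilon\zeta$ then gives
\[\G^{\mu,1}[-\epsilon\zeta,0]\psi \ = \ -\mu\nabla\cdot(h_1\b u_1),\]
the averaged velocity $\b u_1$ being unaffected by the flip because $\nabla$ acts only on the horizontal variable. Undoing the flip sends $\partial_z\mapsto -\partial_z$, which converts the left-hand side into $-G^\mu\psi$ and produces the first identity.

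The whole argument requires no new analytic ingredient beyond what is already used to prove Proposition~\ref{prop:idGvsV} (Green's identity in each layer combined with Laplace's equation and the boundary conditions on $\phi_i$); one only needs to check in passing that $H^{\mu,\delta}\psi$ has the regularity required by Definition~\ref{defG}, which is granted by the elliptic theory used to define it. The one truly delicate point is the bookkeeping of signs and orientations: in particular, the minus sign that distinguishes the two equalities in~\eqref{idGvsU1U2} reflects exactly the fact that the upward unit normal at the interface points out of $\Omega_2$ but into $\Omega_1$.
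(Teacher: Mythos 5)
Your proof is correct and follows essentially the same route as the paper, which likewise reduces both identities to Proposition~\ref{prop:idGvsV} applied separately in each layer: the vertical reflection for $\phi_1$ (so that $G^{\mu}[\epsilon\zeta]=-\G^{\mu,1}[-\epsilon\zeta,0]$) and, for $\phi_2$, the identification of the interface as a Dirichlet boundary with datum $H^{\mu,\delta}\psi$ together with the transmission condition $\partial_n\phi_2=\partial_n\phi_1$, which gives $\G^{\mu,\delta^{-1}}[\epsilon\zeta,\beta b]H^{\mu,\delta}\psi=G^{\mu}[\epsilon\zeta]\psi$. The only harmless slip is that you announce the flip $z\mapsto 1-z$ but then apply the one-layer result with $D=1$ and surface deformation $-\epsilon\zeta$, which are the parameters of the flip $z\mapsto -z$ used in the paper; the two differ by a vertical translation and yield the same conclusion.
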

The proof of these identities is identical as the one of Proposition~\ref{prop:idGvsV} (when considering the upper and lower potential respectively, and using that $\partial_n\phi_1=\partial_n\phi_2=(1+\mu\vert\epsilon\nabla\zeta\vert^2)^{-1/2}G^{\mu}[\epsilon\zeta]\psi$). Thus we omit the proof, and continue with the asymptotic expansions of the above quantities.

\begin{Proposition}\label{prop:devG1}
Let $\zeta,b \in H^{t_0+2}(\RR^d)\cap H^{s+4}(\RR^d)$, $t_0>d/2,s\geq0$, such that there exists $h_0>0$ with
$\min\{h_1, h_2\}\geq h_0>0$, and let $\psi\in L^2_{\rm loc}(\RR^d),\nabla \psi\in H^{s+4}(\RR^d)^d$.
Then
\begin{align}
\label{u1vsPsi1}\big\vert \b u_1 \ - \ \nabla \psi \big\vert_{H^s} \ &\leq \ \mu M(s+2) \big\vert \nabla\psi \big\vert_{H^{s+2}} \ , \\
\label{u1vsPsi2}\big\vert \b u_1 \ - \ \nabla \psi \ + \ \mu \T[h_1,0]\nabla\psi \big\vert_{H^s} \ &\leq \ \mu^2 M(s+4) \big\vert \nabla\psi \big\vert_{H^{s+4}} \ ,\\
\label{Psivsu1}\big\vert \nabla \psi \ - \ \b u_1 \ - \ \mu \T[h_1,0]\b u_1 \big\vert_{H^s} \ &\leq \ \mu^2 M(s+4) \big\vert \nabla\psi \big\vert_{H^{s+4}} \ .
\end{align}
\end{Proposition}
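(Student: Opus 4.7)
The strategy is to reduce each estimate to Proposition~\ref{prop:devG} applied to an effective one-layer problem obtained from the upper layer by reflection. Setting $\tilde\phi(X, Z) \equiv \phi_1(X, -Z)$ for $Z \in (-1, -\epsilon\zeta(X))$, one verifies that $\tilde\phi$ solves the Laplace problem~\eqref{eqn:Laplace0} of Definition~\ref{defG} with parameters $D = 1$, $b_{\rm eff} = 0$, $\zeta_{\rm eff} = -\epsilon\zeta$ and boundary data $\psi$, for which the corresponding fluid depth is $D + \zeta_{\rm eff} - b_{\rm eff} = 1 - \epsilon\zeta = h_1$. Since the reflection preserves $\nabla_X$, a change of variable in the integral defining $\b u_1$ in Definition~\ref{defU1U2} shows that $\b u_1$ coincides with the vertically averaged mean velocity $\b\V$ of Proposition~\ref{prop:idGvsV} associated with this effective problem.

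Given this identification, estimates~\eqref{u1vsPsi1} and~\eqref{u1vsPsi2} are immediate consequences of Proposition~\ref{prop:devG} applied with $h = h_1$ and $b = 0$: the regularity $\zeta_{\rm eff}\in H^{t_0+2}\cap H^{s+4}$ is inherited from $\zeta$, and the non-vanishing depth $h_1 \geq h_0 > 0$ is part of the hypothesis. Since the effective bottom is flat, only the regularity of $\zeta$ enters in these two bounds; the constant $M(s)$ of the statement is more than sufficient.

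To derive~\eqref{Psivsu1}, I would rewrite~\eqref{u1vsPsi2} as
\[ \nabla\psi \ = \ \b u_1 \ + \ \mu\, \T[h_1, 0]\nabla\psi \ + \ R_1, \qquad \big\vert R_1\big\vert_{H^s} \ \leq \ \mu^2\, M(s+4)\,\big\vert \nabla\psi \big\vert_{H^{s+4}}, \]
and then substitute the lower-order expansion~\eqref{u1vsPsi1} -- read at regularity $H^{s+2}$ rather than $H^s$, which is admissible since $\nabla\psi \in H^{s+4}$ -- into the $\mu$-order term. The key auxiliary bound is the continuity of $\T[h_1, 0] : H^{s+2}(\RR^d)^d \to H^s(\RR^d)^d$, whose operator norm is controlled by $M(s+2)$; this follows from the explicit formula~\eqref{defT} with $b = 0$, namely $\T[h_1, 0] V = -\frac{1}{3 h_1}\nabla(h_1^3\,\nabla\cdot V)$, together with the Sobolev algebra property applied to $h_1 \in H^{\max(s+2, t_0+1)}$ and the pointwise lower bound $h_1 \geq h_0$. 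Combining these ingredients yields
\[ \nabla\psi - \b u_1 - \mu\,\T[h_1,0]\b u_1 \ = \ \mu\,\T[h_1,0]\big(\nabla\psi - \b u_1\big) + R_1 \ = \ \O(\mu^2) \quad \text{in } H^s(\RR^d)^d,\]
which is precisely~\eqref{Psivsu1}.

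The hard part of the argument is really just this derivative bookkeeping: the second-order operator $\T[h_1,0]$ loses two derivatives on its argument, which is why the final $\mu^2$ gain in~\eqref{Psivsu1} forces the $H^{s+4}$ (and not merely $H^{s+2}$) regularity on $\nabla\psi$ in the hypotheses. Beyond this, the proof is a direct translation of the single-layer result via the reflection above, and no new analytical input is required.
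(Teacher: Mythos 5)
Your proposal is correct and follows essentially the same route as the paper: the reflection $\t\phi(X,z)=\phi_1(X,-z)$ reduces the upper layer to the one-layer setting of Proposition~\ref{prop:devG} with depth $h_1$ and flat bottom (the paper phrases this via the identity $G^{\mu}[\epsilon\zeta]=-\G^{\mu,1}[-\epsilon\zeta,0]$), and~\eqref{Psivsu1} is then obtained, exactly as in the paper, by combining~\eqref{u1vsPsi2} with~\eqref{u1vsPsi1} at the level $H^{s+2}$ through the continuity bound $\big\vert \T[h_1,0](\nabla\psi-\b u_1)\big\vert_{H^s}\lesssim \big\vert \nabla\psi-\b u_1\big\vert_{H^{s+2}}$. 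No gaps.
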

\begin{proof}
Expansions~\eqref{u1vsPsi1},\eqref{u1vsPsi2},
simply follow from Proposition~\ref{prop:devG} once we remark that $\t\phi(X,z)\equiv\phi_1(X,-z)$ satisfies
\[\left\{
\begin{array}{ll}
 \left(\ \mu\Delta \ +\  \partial_z^2\ \right)\ \t\phi=0 & \mbox{ in }  \{(X,z)\in \RR^{d+1},\ -1<z<-\epsilon\zeta(X)\}, \\
\partial_z \t\phi =0  & \mbox{ on } \{(X,z)\in \RR^{d+1},\ z=-1\}, \\
 \t\phi =\psi& \mbox{ on }  \{(X,z)\in \RR^{d+1},\ z= -\epsilon\zeta\}.
\end{array}\right.
\]
It follows that one has the identity $\phi_1(X,-z)\equiv \phi^{\mu,1}[-\epsilon\zeta,0]\psi$, the unique solution of~\eqref{eqn:Laplace0}. Consequently, $G^{\mu}[\epsilon\zeta]=-\G^{\mu,1}[-\epsilon\zeta,0]$, and the expansions~\eqref{u1vsPsi1},\eqref{u1vsPsi2} follow. Expansion~\eqref{Psivsu1} is a straightforward consequence of~\eqref{u1vsPsi1},\eqref{u1vsPsi2}, and
\[ \big\vert \T[h_1,0](\nabla\psi-\b u_1)\big\vert_{H^s}\leq C(h_0^{-1},\epsilon\big\vert \zeta\big\vert_{H^{s+1}}) \big\vert \nabla\psi-\b u_1\big\vert_{H^{s+2}}. \]
The Proposition is proved.\end{proof}

\begin{Proposition}\label{prop:devH}
Let $\zeta,b\in H^{t_0+2}(\RR^d)\cap H^{s+11/2}(\RR^d)$, $t_0>d/2,s\geq0$, such that there exists $h_0>0$ with
$\min\{h_1, h_2\}\geq h_0>0$, and let $\psi\in L^2_{\rm loc}(\RR^d),\nabla \psi\in H^{s+5}(\RR^d)^d$.
Then one has
\begin{align}
\label{Hpsivsu21} \big\vert \b u_2 \ - \ \nabla H^{\mu,\delta}\psi \big\vert_{H^s} \ &\leq \ \mu M(s+7/2) \big\vert \nabla \psi \big\vert_{H^{s+3}} \ ,\\
\label{Hpsivsu22}  \big\vert  \nabla H^{\mu,\delta}\psi \ - \ \b u_2  \ - \ \mu \T[h_2,\beta b]\b u_2 \big\vert_{H^s} \ &\leq \ \mu^2 M(s+11/2) \big\vert \nabla\psi \big\vert_{H^{s+5}} \ .
\end{align}
\end{Proposition}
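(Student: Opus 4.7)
The strategy is to recognise that the potential $\phi_2$ in the lower layer is itself a one-layer harmonic extension in the sense of Definition~\ref{defG}, to apply Proposition~\ref{prop:devG} to it, and finally to convert the resulting bounds (which involve $\nabla H^{\mu,\delta}\psi$) into bounds driven by $\nabla\psi$ through the elliptic inversion of an auxiliary Dirichlet-Neumann identity.

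More precisely, $\phi_2$ is the unique (up to constant) solution of Laplace's equation in the lower layer with Dirichlet trace $H^{\mu,\delta}\psi$ at $z=\epsilon\zeta$ and vanishing Neumann data at $z=-\frac1\delta+\beta b$; thus $\phi_2=\phi^{\mu,1/\delta}[\epsilon\zeta,\beta b]\bigl(H^{\mu,\delta}\psi\bigr)$ and Proposition~\ref{prop:devG} applies after the identifications $D=1/\delta$, surface $\epsilon\zeta$, bottom $\beta b$, Dirichlet datum $H^{\mu,\delta}\psi$. This immediately produces
\begin{align*}
\bigl\vert\bar u_2-\nabla H^{\mu,\delta}\psi\bigr\vert_{H^s}&\leq \mu\, M(s+2)\bigl\vert\nabla H^{\mu,\delta}\psi\bigr\vert_{H^{s+2}},\\
\bigl\vert\bar u_2-\nabla H^{\mu,\delta}\psi+\mu\,\T[h_2,\beta b]\nabla H^{\mu,\delta}\psi\bigr\vert_{H^s}&\leq \mu^2\, M(s+4)\bigl\vert\nabla H^{\mu,\delta}\psi\bigr\vert_{H^{s+4}}.
\end{align*}
A triangle-inequality manipulation mimicking the proof of~\eqref{Psivsu1}, combined with the operator bound $\bigl\vert\T[h_2,\beta b]V\bigr\vert_{H^s}\leq M(s+2)\,|V|_{H^{s+2}}$, then allows me to swap $\nabla H^{\mu,\delta}\psi$ for $\bar u_2$ inside the $\T$-term of the second estimate, thereby reproducing the shape of~\eqref{Hpsivsu22} but with $|\nabla H^{\mu,\delta}\psi|_{H^{s+4}}$ still on the right-hand side.

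To convert these into estimates involving $\nabla\psi$ only, I combine Proposition~\ref{prop:idGvsV} applied to $\phi_2$ with the bi-fluidic identity~\eqref{idGvsU1U2} to obtain the chain
\[
\G^{\mu,1/\delta}[\epsilon\zeta,\beta b]\bigl(H^{\mu,\delta}\psi\bigr) \ = \ -\mu\,\nabla\cdot(h_2\bar u_2) \ = \ G^{\mu}[\epsilon\zeta]\psi,
\]
and expand each side by Proposition~\ref{prop:devG}. At leading order this gives the elliptic relation
\[
-\nabla\cdot\bigl(h_2\,\nabla H^{\mu,\delta}\psi\bigr) \ = \ -\nabla\cdot\bigl(h_1\,\nabla\psi\bigr)+\mu\,R^{\mu,\delta}[\zeta,b,\psi],
\]
with a remainder $R^{\mu,\delta}$ bounded in the relevant Sobolev norm by quantities controlled by $|\nabla\psi|$ and by higher-order norms of $|\nabla H^{\mu,\delta}\psi|$. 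Since $h_2\geq h_0>0$, standard elliptic estimates for the divergence-form operator $-\nabla\cdot(h_2\nabla\,\cdot\,)$, applied to the scalar primitive whose gradient is $\nabla H^{\mu,\delta}\psi$ and absorbing the $\nabla H^{\mu,\delta}\psi$-contribution of $\mu R^{\mu,\delta}$ by a bootstrap for $\mu\leq\mu_{\max}$, deliver
\[
\bigl\vert\nabla H^{\mu,\delta}\psi\bigr\vert_{H^{s'}} \ \leq \ M(s'+3/2)\,\bigl\vert\nabla\psi\bigr\vert_{H^{s'+1}},
\]
the one-derivative loss on $\psi$ coming from inverting the elliptic operator, and the extra $3/2$ derivatives of regularity on $(\zeta,b)$ coming from trace and commutator estimates at the curved interface $\Gamma$.

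Inserting this bound with $s'=s+2$ into the first estimate of Step~1 yields~\eqref{Hpsivsu21}, and with $s'=s+4$ into the $\T$-rearranged second estimate yields~\eqref{Hpsivsu22}, all non-decreasing factors being absorbed into a single $M(\cdot)$. The principal obstacle is precisely the elliptic inversion controlling $\nabla H^{\mu,\delta}\psi$ by $\nabla\psi$: tracking the derivative losses (one on $\psi$, three-half on the geometry) is exactly what is responsible for the apparent offsets $M(s+7/2)$, $M(s+11/2)$ and the gap between the $H^{s+3}$, $H^{s+5}$ norms of $\nabla\psi$ in the statement and the $H^{s+2}$, $H^{s+4}$ norms of $\nabla H^{\mu,\delta}\psi$ naturally produced by a direct application of Proposition~\ref{prop:devG}.
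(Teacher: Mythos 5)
Your first two steps coincide with the paper's own proof: the identification $\phi_2=\phi^{\mu,\delta^{-1}}[\epsilon\zeta,\beta b]\big(H^{\mu,\delta}\psi\big)$, the application of Proposition~\ref{prop:devG} to get the two intermediate bounds with $\big\vert \nabla H^{\mu,\delta}\psi\big\vert_{H^{s+2}}$ and $\big\vert \nabla H^{\mu,\delta}\psi\big\vert_{H^{s+4}}$ on the right-hand side (these are exactly~\eqref{u2vsHpsi1},\eqref{u2vsHpsi2}), and the swap of $\nabla H^{\mu,\delta}\psi$ for $\b u_2$ inside the $\T[h_2,\beta b]$ term. The divergence, and the genuine gap, is in your third step. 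The paper closes the argument by invoking the estimate~\eqref{estHpsi}, i.e.\ $\big\vert \nabla H^{\mu,\delta}\psi\big\vert_{H^{s}}\leq C(h_0^{-1},\delta,\delta^{-1},\vert\zeta\vert_{H^{s+3/2}})\vert\nabla\psi\vert_{H^{s+1}}$, which is Proposition~3 of~\cite{BonaLannesSaut08}: a uniform-in-$\mu$ operator bound on $H^{\mu,\delta}=\{\G^{\mu,\delta^{-1}}[\epsilon\zeta,\beta b]\}^{-1}G^{\mu}[\epsilon\zeta]$, proved there by uniform elliptic estimates on the $\mu$-scaled potential problems. You instead try to rederive this bound from the identity $\G^{\mu,\delta^{-1}}[\epsilon\zeta,\beta b]H^{\mu,\delta}\psi=G^{\mu}[\epsilon\zeta]\psi$ by expanding both sides to leading order and inverting $-\nabla\cdot(h_2\nabla\,\cdot\,)$, ``absorbing'' the $\mu R^{\mu,\delta}$ remainder by a bootstrap. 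This step fails as stated: by Proposition~\ref{prop:devG}, the remainder in the expansion of $\frac1\mu\G^{\mu,\delta^{-1}}[\epsilon\zeta,\beta b]H^{\mu,\delta}\psi$ is controlled by $\mu\,M\,\big\vert\nabla H^{\mu,\delta}\psi\big\vert_{H^{s'+3}}$, i.e.\ by a \emph{strictly higher} Sobolev norm of the very quantity you are trying to estimate. A term with a loss of derivatives cannot be absorbed into the left-hand side $\big\vert\nabla H^{\mu,\delta}\psi\big\vert_{H^{s'}}$, and iterating the relation only shifts the problem to ever higher norms, so the scheme never closes without an independent a priori bound on $\nabla H^{\mu,\delta}\psi$ uniform in $\mu$ --- which is precisely the estimate being sought (the argument is circular, since the error terms of the shallow-water expansion are themselves measured in norms of the Dirichlet datum $H^{\mu,\delta}\psi$). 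Note also that even absent the derivative loss, $\mu\leq\mu_{\max}$ gives no smallness to absorb.

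The fix is not to manufacture~\eqref{estHpsi} from the expansion but to take it as an input: either cite \cite[Proposition~3]{BonaLannesSaut08} (as the paper does), or prove it directly from the variational/elliptic theory of the transformed Laplace problem in the lower layer (uniform coercivity of the $\mu$-scaled gradient, trace estimates at the curved interface), which is where the $3/2$ loss on $\zeta$ and the one-derivative loss on $\psi$ actually come from. With that estimate in hand, your insertion of $s'=s+2$ and $s'=s+4$ reproduces~\eqref{Hpsivsu21} and~\eqref{Hpsivsu22} exactly as in the paper.
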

\begin{proof}
As above, the expansions can be deduced from Proposition~\ref{prop:devG}, once we remark that by definition, $\phi_2(X,z)$
satisfies
\[\left\{
\begin{array}{ll}
 \left(\ \mu\Delta \ +\  \partial_z^2\ \right)\ \phi_2=0 & \mbox{ in }  \{(X,z)\in \RR^{d+1},\ -\frac1\delta+\beta b(X)<z<\epsilon\zeta(X)\}, \\
\partial_n \phi_2 =0  & \mbox{ on } \{(X,z)\in \RR^{d+1},\ z=-\frac1\delta+\beta b(X)\}, \\
 \phi_2 =H^{\mu,\delta}[\epsilon\zeta,\beta b]\psi& \mbox{ on }  \{(X,z)\in \RR^{d+1},\ z= \epsilon\zeta\}.
\end{array}\right.
\]
In other words, one has the identity $\phi_2( X,z)\equiv \phi^{\mu,\delta^{-1}}[\epsilon\zeta,\beta b]H^{\mu,\delta}\psi$, where $ \phi^{\mu,\delta^{-1}}$ is defined as the solution of~\eqref{eqn:Laplace0}.
\footnote{Note that by definition of the Dirichlet-Neumann operators $G^\mu$ and $\G^{\mu,\delta^{-1}}$, this identity yields
\[ \G^{\mu,\delta^{-1}}[\epsilon\zeta,\beta b]H^{\mu,\delta}\psi \ = \ \sqrt{1+\mu\vert\epsilon\nabla\zeta\vert^2}(\partial_n \phi_2)_{z=\epsilon\zeta} \ = \ G^{\mu}[\epsilon\zeta]\psi  \ .
\]
 In other words, and as remarked in~\cite{Lannes13}, one has the identity
\[ H^{\mu,\delta} \ = \ \big\{\G^{\mu,\delta^{-1}}[\epsilon\zeta,\beta b]\big\}^{-1}G^{\mu}[\epsilon\zeta] \ = \ - \big\{\G^{\mu,\delta^{-1}}[\epsilon\zeta,\beta b]\big\}^{-1}\G^{\mu,1}[-\epsilon\zeta,0]\ . \]
In particular, the bound~\eqref{estHpsi} is not optimal; see~\cite[Proposition 1 and Remark~6]{Lannes13}.}

Thus one deduces from Proposition~\ref{prop:devG} the following estimates:
\begin{align}
\label{u2vsHpsi1} \big\vert \b u_2 \ - \ \nabla H^{\mu,\delta}\psi \big\vert_{H^s} \ &\leq \ \mu M(s+2) \big\vert \nabla H^{\mu,\delta}\psi \big\vert_{H^{s+2}} \ ,\\
\label{u2vsHpsi2}\big\vert \b u_2 \ - \ \nabla H^{\mu,\delta}\psi \ + \ \mu \T[h_2,\beta b]\nabla H^{\mu,\delta}\psi \big\vert_{H^s} \ &\leq \ \mu^2 M(s+4) \big\vert \nabla H^{\mu,\delta}\psi \big\vert_{H^{s+4}}\ .
\end{align}
Furthermore, one has from~\cite[Proposition 3]{BonaLannesSaut08} that 
\begin{equation}\label{estHpsi}
\big\vert  \nabla H^{\mu,\delta}\psi \big\vert_{H^s} \leq C(h_0^{-1},\delta,\delta^{-1},|\zeta|_{H^{s+3/2}})| \nabla \psi |_{H^{s+1}},
\end{equation}
so that estimate~\eqref{Hpsivsu21} is now straightforward. 

Finally, estimate~\eqref{Hpsivsu22} is easily deduced from the previous estimates.
\end{proof}

\subsection{Construction of the Green-Naghdi/Green-Naghdi model}\label{sec:GN}
Let us recall the full Euler system~\eqref{eqn:EulerCompletAdim}:
\begin{equation}\label{eqn:EulerCompletAdim2}
\left\{ \begin{array}{l}
\displaystyle\partial_{ t}{\zeta} \ -\ \frac{1}{\mu}G^{\mu}\psi\ =\ 0,  \\ \\
\displaystyle\partial_{ t}\Big(\nabla H^{\mu,\delta}\psi-\gamma \nabla{\psi} \Big)\ + \ (\gamma+\delta)\nabla{\zeta} \ + \ \frac{\epsilon}{2} \nabla\Big(|\nabla H^{\mu,\delta}\psi|^2 -\gamma |\nabla {\psi}|^2 \Big) \\
\displaystyle\hspace{7cm} = \ \mu\epsilon\nabla\N^{\mu,\delta}-\frac{\gamma+\delta}{\Bo}\frac{\nabla \big(k(\epsilon\sqrt\mu\zeta)\big)}{{\epsilon\sqrt\mu}} \ ,
\end{array}
\right.
\end{equation}
where we denote
$ \N^{\mu,\delta} \ \equiv \ \dfrac{\big(\frac{1}{\mu}G^{\mu}\psi+\epsilon(\nabla{\zeta})\cdot (\nabla H^{\mu,\delta}\psi) \big)^2\ -\ \gamma\big(\frac{1}{\mu}G^{\mu}\psi+\epsilon(\nabla{\zeta})\cdot (\nabla{\psi}) \big)^2}{2(1+\mu|\epsilon\nabla{\zeta}|^2)}.
     $

By Proposition~\ref{prop:idGvsU1U2}, the first equation of~\eqref{eqn:EulerCompletAdim2} yields
\begin{equation}\label{dtzetavsu1u2}
\partial_{ t}{\zeta} \ = \ \nabla\cdot(h_1\b u_1)\ =\ -\nabla\cdot(h_2\b u_2) .\end{equation}
When we plug the expansions of Propositions~\ref{prop:devG1} and~\ref{prop:devH} into the second equation of~\eqref{eqn:EulerCompletAdim2}, and withdrawing $\O(\mu^2)$ terms, one obtains
\begin{multline}\label{eqn:GNGN}
\partial_{ t}\Big(\b u_2-\gamma \b u_1 +  \mu \T[h_2,\beta b]\b u_2-  \mu \gamma \T[h_1,0]\b u_1\Big)\ \\
+ \ (\gamma+\delta)\nabla{\zeta} \ + \ \frac{\epsilon}{2} \nabla\Big(|\b u_2  \ + \ \mu \T[h_2,\beta b]\b u_2|^2 -\gamma |\b u_1  \ + \ \mu \T[h_1,0]\b u_1|^2 \Big) \\
= \ \mu\epsilon\nabla\N^{\mu,\delta}_0-\frac{\gamma+\delta}{\Bo}\frac{\nabla \big(k(\epsilon\sqrt\mu\zeta)\big)}{{\epsilon\sqrt\mu}} \ + \ \O(\mu^2)\ ,
\end{multline}
with
\begin{align*} \N^{\mu,\delta}_0 \ &\equiv \ \dfrac{\big(-\nabla\cdot(h_2\b u_2)+\epsilon(\nabla{\zeta})\cdot (\b u_2) \big)^2\ -\ \gamma\big(\nabla\cdot(h_1\b u_1)+\epsilon(\nabla{\zeta})\cdot (\b u_1) \big)^2}{2}\\
&\equiv \ \dfrac{\big(-h_2\nabla\b u_2+\beta(\nabla{b})\cdot (\b u_2) \big)^2\ -\ \gamma\big(h_1\nabla\cdot\b u_1 \big)^2}{2}
\end{align*}

\begin{Remark}
Equations~\eqref{dtzetavsu1u2} and~\eqref{eqn:GNGN} are very similar to the system obtained in~\cite{ChoiCamassa99}. It may also be recovered from system (60) in~\cite{Duchene10} when setting $\alpha=0$ (notation therein), and after straightforward adjustments (in particular, we use a different scaling in the non-dimensionalizing step).
\end{Remark}
\begin{Proposition}[Consistency] \label{cons:GNu1u2}
Let $U^\p\equiv(\zeta^\p,\psi^\p)^\top$ be a family of solutions to the full Euler system~\eqref{eqn:EulerCompletAdim} such that there exists $T>0$, $s\geq0$ for which $(\zeta^\p,\nabla\psi^\p)^\top$ is bounded in $L^\infty([0,T);H^{s+N})^{d+1}$ ($N$ sufficiently large), uniformly with respect to $(\mu,\epsilon,\delta,\gamma)\equiv \p\in \P_{SW}$; see Definition~\ref{Def:Regimes}. Moreover, assume that $b\in  H^{s+N}$ and
\[
\exists h_{0}>0 \text{ such that }\quad h_1 \ \equiv\  1-\epsilon\zeta^\p \geq\ h_{0}\ >\ 0, \quad h_2 \ \equiv \ \frac1\delta +\epsilon \zeta^\p-\beta b\geq\ h_{0}\ >\ 0.
\]
Define $\b u_1^\p,\b u_2^\p$ as in Definition~\ref{defU1U2}. Then $(\zeta^\p,\b u_1^\p,\b u_2^\p)$  satisfy~\eqref{dtzetavsu1u2},\eqref{eqn:GNGN}, the latter up to a remainder term, $R$, bounded by
\[ \big\Vert R \big\Vert_{L^\infty([0,T);H^s)^d} \ \leq \ \mu^2\ C \ , \]
with $C=C(h_0^{-1},\mu_{\max},\delta_{\min}^{-1},\delta_{\max},\big\vert b \big\vert_{H^{s+N}} ,\big\Vert \zeta^\p\big\Vert_{L^\infty([0,T);H^{s+N})} ,\big\Vert\nabla\psi^\p\big\Vert_{L^\infty([0,T);H^{s+N})^d}$, uniform with respect to $(\mu,\epsilon,\delta,\gamma)\in \P_{SW}$.
\end{Proposition}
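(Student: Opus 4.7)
The plan is to substitute the asymptotic expansions from Propositions~\ref{prop:devG1} and~\ref{prop:devH} into each term of the full Euler system \eqref{eqn:EulerCompletAdim2}, and collect the resulting $\O(\mu^2)$ errors into a single residual $R$ controlled in $H^s$ via Moser-type product estimates. The mass equation \eqref{dtzetavsu1u2} carries \emph{no} remainder at all: the exact identities of Proposition~\ref{prop:idGvsU1U2}, combined with the first line of \eqref{eqn:EulerCompletAdim2}, yield \eqref{dtzetavsu1u2} directly.

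For the momentum equation, I would inject $\nabla\psi^\p = \b u_1^\p + \mu\T[h_1,0]\b u_1^\p + r_1$ from \eqref{Psivsu1} and $\nabla H^{\mu,\delta}\psi^\p = \b u_2^\p + \mu\T[h_2,\beta b]\b u_2^\p + r_2$ from \eqref{Hpsivsu22}, with $\vert r_i\vert_{H^s}\leq \mu^2 M$, into the second line of \eqref{eqn:EulerCompletAdim2}. The gravity term $(\gamma+\delta)\nabla\zeta^\p$ and the surface-tension term are unchanged. For the quadratic gradient $\tfrac{\epsilon}{2}\nabla(\vert\nabla H^{\mu,\delta}\psi\vert^2 - \gamma\vert\nabla\psi\vert^2)$, tame product estimates together with the uniform $H^{s+N}$ control on $\nabla\psi^\p$ (by hypothesis) and on $\nabla H^{\mu,\delta}\psi^\p$ (via \eqref{estHpsi}) yield the target expression up to an $\O(\mu^2)$ error in $H^s$. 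For the nonlinear term $\mu\epsilon\nabla\N^{\mu,\delta}$, the $\mu$ prefactor means that only $\O(\mu)$-accurate substitutions are needed inside $\N^{\mu,\delta}$; using the exact identity $\tfrac{1}{\mu}G^\mu\psi = \nabla\cdot(h_1\b u_1) = -\nabla\cdot(h_2\b u_2)$ together with the elementary computations $\nabla\cdot(h_1\b u_1) + \epsilon\nabla\zeta\cdot\b u_1 = h_1\nabla\cdot\b u_1$ (since $h_1 = 1 - \epsilon\zeta$) and $-\nabla\cdot(h_2\b u_2) + \epsilon\nabla\zeta\cdot\b u_2 = -h_2\nabla\cdot\b u_2 + \beta\nabla b\cdot\b u_2$ (since $h_2 = \delta^{-1} + \epsilon\zeta - \beta b$), along with $(1+\mu\vert\epsilon\nabla\zeta\vert^2)^{-1} = 1 + \O(\mu)$, recovers $\N_0^{\mu,\delta}$ exactly, modulo terms that become $\O(\mu^2)$ in $H^s$ after multiplication by $\mu\epsilon$ and differentiation.

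The delicate step, and the one I expect to be the main obstacle, is the time derivative. Writing the Euler momentum equation as $\partial_t A = B$ with $A \equiv \nabla H^{\mu,\delta}\psi^\p - \gamma\nabla\psi^\p$, and denoting by $\tilde A$ the proposed Green--Naghdi left-hand side $\b u_2^\p - \gamma\b u_1^\p + \mu\T[h_2,\beta b]\b u_2^\p - \mu\gamma\T[h_1,0]\b u_1^\p$, the residual splits as $R = (B - \tilde B) + \partial_t(\tilde A - A)$. The first piece is bounded by $\mu^2 C$ by the steps above, but the second requires an independent $\O(\mu^2)$ bound on $\partial_t(\tilde A - A)$, which is not immediate because Propositions~\ref{prop:devG1}--\ref{prop:devH} are pointwise in time. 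I would handle this by viewing $\tilde A - A$ as a functional of $(\zeta^\p, b, \psi^\p)$ whose shape derivative in $\zeta^\p$ and linear derivative in $\psi^\p$ are controlled by the same arguments as in Propositions~\ref{prop:devG1}--\ref{prop:devH} (losing only a bounded number of derivatives), and then replacing $\partial_t\zeta^\p$ and $\partial_t\nabla\psi^\p$ by their expressions coming from \eqref{eqn:EulerCompletAdim2} themselves, which are uniformly bounded in a sufficiently strong Sobolev norm provided $N$ has been chosen large enough to absorb the loss. Collecting all contributions yields the stated bound on $\Vert R\Vert_{L^\infty([0,T);H^s)^d}$, with $C$ of the announced form.
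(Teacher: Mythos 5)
Your proposal is correct and follows essentially the same route as the paper: the residual is split into the time-derivative part, the quadratic gradient part, and the $\mu\epsilon\nabla\N^{\mu,\delta}$ part, each controlled via the expansions of Propositions~\ref{prop:devG1}--\ref{prop:devH} (second order where no $\mu$ prefactor helps, first order plus the exact identity~\eqref{idGvsU1U2} inside $\N^{\mu,\delta}$) together with tame product estimates and~\eqref{estHpsi}. Your treatment of the delicate term $\partial_t(\tilde A-A)$ — time-differentiating the expansions and recovering $\partial_t\zeta^\p$, $\partial_t\nabla\psi^\p$ from the full Euler system itself, with the loss of derivatives absorbed by taking $N$ large — is exactly the paper's argument for its term $R_I$.
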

\begin{proof}
The fact that $(\zeta^\p,\b u_1^\p,\b u_2^\p)$ satisfy~\eqref{dtzetavsu1u2} has been expressed earlier in Proposition~\ref{prop:idGvsU1U2}. That~\eqref{eqn:GNGN} approximately holds is a consequence of the asymptotic expansions of Propositions~\ref{prop:devG1} and~\ref{prop:devH}. Let us detail briefly the argument.

Subtracting~\eqref{eqn:GNGN} to the last equation in~\eqref{eqn:EulerCompletAdim} yields (withdrawing the explicit reference to $\p\in \P_{SW}$ for the sake of readability)
\begin{align} R \ &= \  \partial_{ t}\Big(\nabla H^{\mu,\delta}\psi-\gamma \nabla{\psi} -\big\{\b u_2-\gamma \b u_1 +  \mu \T[h_2,\beta b]\b u_2-  \mu \gamma \T[h_1,0]\b u_1\big\}\Big)\nn \\
& \quad + \ \frac{\epsilon}{2} \nabla\Big(|\nabla H^{\mu,\delta}\psi|^2 -\gamma |\nabla {\psi}|^2- \big\{ |\b u_2  \ + \ \mu \T[h_2,\beta b]\b u_2|^2 -\gamma |\b u_1  \ + \ \mu \T[h_1,0]\b u_1|^2  \big\} \Big) \nn \\
 &\quad- \ \mu\epsilon\nabla \big(\N^{\mu,\delta}-\N^{\mu,\delta}_0\big) \nn\\
 &\equiv \ R_I \ +  \ R_{II} \ + \ R_{III}.
\end{align}
We now show how to estimate each of these terms.
\medskip

Recall~\eqref{Hpsivsu22} in Proposition~\ref{prop:devH}. It follows from tame product estimates in $H^s$ (see~\cite[Appendix~A]{DucheneIsrawiTalhouk} for example)
\begin{align*}
\big\vert |\nabla H^{\mu,\delta}\psi|^2-  |\b u_2  \ + \ \mu \T[h_2,\beta b]\b u_2|^2 \big\vert_{H^s} \ &\leq \ C_2 \big\vert \nabla H^{\mu,\delta}\psi\ -\ \b u_2  \ - \ \mu \T[h_2,\beta b]\b u_2\big\vert_{H^s} \\
&\leq\ \mu^2 C_2 \ M(s+11/2) \big\vert \nabla \psi \big\vert_{H^{s+5}}
\end{align*}
with $C_2\lesssim \big\vert \nabla H^{\mu,\delta}\psi + \b u_2   +  \mu \T[h_2,\beta b]\b u_2\big\vert_{H^{\max\{t_0,s\}}} $. Here and below, we denote $t_0>d/2$.

Identically, using~\eqref{Psivsu1} in Proposition~\ref{prop:devG1}, one obtains
\begin{align*}
\big\vert |\nabla {\psi}|^2-  |\b u_1  \ + \ \mu \T[h_1,0]\b u_1|^2  \big\vert_{H^s} \ &\leq \ C_1 \big\vert \nabla {\psi}\ -\   \b u_1  \ - \ \mu \T[h_1,0]\b u_1 \big\vert_{H^s} \\
&\leq\ \mu^2 C_1 \ M(s+4) \big\vert \nabla \psi \big\vert_{H^{s+4}}
\end{align*}
with $C_1\lesssim \big\vert  \nabla {\psi} +   \b u_1  + \mu \T[h_1,0]\b u_1 \big\vert_{H^{\max\{t_0,s\}}} $.

It is now clear that one can choose $N$ sufficiently large so that the following holds:
\begin{equation}\label{est:RII}
\big\Vert R_{II} \big\Vert_{L^\infty([0,T);H^s)^d} \ \leq \ \mu^2 C \ ,
\end{equation}
with $C$ as in the Proposition  (note that one can deduce a control in $H^s$ of $\b u_1$ from a control in $H^{s+2}$ of $\nabla\psi$, thanks to~\eqref{u1vsPsi1} ---being non optimal).
\medskip

The estimate on $R_{III}$ is obtained similarly. Using identity~\eqref{idGvsU1U2} as well as first order expansions~\eqref{u1vsPsi1},\eqref{u2vsHpsi1}, one obtains
\[ \big\vert \big(\frac{1}{\mu}G^{\mu}\psi+\epsilon(\nabla{\zeta})\cdot (\nabla H^{\mu,\delta}\psi) \big)^2-\big(-\nabla\cdot(h_2\b u_2)+\epsilon(\nabla{\zeta})\cdot (\b u_2) \big)^2 \big\vert_{H^s} \ \leq \ \mu \ C_2'\ M(s+7/2)\big\vert \nabla \psi\big\vert_{H^{s+3}},\]
with $C_2'\lesssim \big\vert \frac{2}{\mu}G^{\mu}\psi + \epsilon(\nabla{\zeta})\cdot (\nabla H^{\mu,\delta}\psi+ \b u_2)\big\vert_{H^{\max\{t_0,s\}}} $, and
\[ \big\vert\big(\frac{1}{\mu}G^{\mu}\psi+\epsilon(\nabla{\zeta})\cdot (\nabla{\psi}) \big)^2-\big(\nabla\cdot(h_1\b u_1)+\epsilon(\nabla{\zeta})\cdot (\b u_1) \big)^2 \big\vert_{H^s} \ \leq \ \mu \ C_1'\ M(s+2)\big\vert \nabla \psi\big\vert_{H^{s+2}},\]
with $C_1'\lesssim \big\vert \frac{2}{\mu}G^{\mu}\psi + \epsilon(\nabla{\zeta})\cdot (\nabla\psi+ \b u_1)\big\vert_{H^{\max\{t_0,s\}}} $.

Finally, for any $f\in H^s(\RR^d)$, one has
\[ \big\vert \frac{f}{2(1+\mu|\epsilon\nabla{\zeta}|^2)} - \frac{f}{2}  \big\vert_{H^s} \ \lesssim \ \big\vert f  \big\vert_{H^s}  \big\vert \mu|\epsilon\nabla{\zeta}|^2\big\vert_{H^{\max\{t_0,s\}}}, \]
so that one deduces from the above estimates that
\begin{equation}\label{est:RIII}
\big\Vert R_{III} \big\Vert_{L^\infty([0,T);H^s)^d} \ \leq \ \mu^2 C \ ,
\end{equation}
with $C$ as in the Proposition, and for $N$ sufficiently large.
\medskip

The estimate on $R_I$ requires a control of the time derivatives. One can obtain equivalent results as in Propositions~\ref{prop:devG1} an~\ref{prop:devH}, and in particular
\begin{align*}
\big\vert \partial_t\Big( \nabla \psi \ - \ \b u_1 \ - \ \mu \T[h_1,0]\b u_1 \Big) \big\vert_{H^s} \ &\leq \ \mu^2 N(s+4) \big\vert \partial_t \nabla\psi \big\vert_{H^{s+4}}\\
  \big\vert \partial_t\Big( \nabla H^{\mu,\delta}\psi \ - \ \b u_2  \ - \ \mu \T[h_2,\beta b]\b u_2\Big) \big\vert_{H^s} \ &\leq \ \mu^2  N(s+11/2)\big\vert \partial_t \nabla\psi \big\vert_{H^{s+5}},
\end{align*}
with $N(\b s)\equiv C(\frac{1}{h_0},\big\vert \zeta \big\vert_{H^{\b s}},\big\vert b \big\vert_{H^{\b s}},\big\vert  \nabla\psi \big\vert_{H^{\b s}},\big\vert \partial_t \zeta \big\vert_{H^{\b s}})$ for $\b s\geq t_0+1$;
following the same method, but after differentiating the equations (with respect to the time variable, $t$). We do not detail the proof, and refer to~\cite{Duchene10,Lannes} for examples of applications of this strategy.

Finally, note that one can control $\partial_t \nabla\psi $ and $\partial_t \zeta$ using only a control on $\nabla\psi$ and $\zeta$, using that $(\zeta,\psi)^\top$ satisfies the full Euler system~\eqref{eqn:EulerCompletAdim}; allowing for a loss of derivatives. At the end of the day, one sees that if $N$ is sufficiently large, then one has
\begin{equation}\label{est:RI}
\big\Vert R_{I} \big\Vert_{L^\infty([0,T);H^s)^d} \ \leq \ \mu^2 C \ ,
\end{equation}
with $C$ as in the Proposition. 
\medskip

Altogether, estimates~\eqref{est:RII},~\eqref{est:RIII} and~\eqref{est:RI} yield Proposition~\ref{cons:GNu1u2}.
\end{proof}
\bigskip

Our aim is now to approximate~\eqref{dtzetavsu1u2},\eqref{eqn:GNGN} as a system of coupled evolution equations (thus directly comparable with~\eqref{eqn:EulerCompletAdim2}). In order to do so, we introduce a new velocity variable, $v$, which shall satisfy
\begin{equation}\label{eqnv}
\nabla\cdot (\frac{h_1h_2}{h_1+\gamma h_2}v) \ = \ \nabla\cdot (h_2\b u_2) \ = \ -\nabla\cdot(h_1\b u_1),\end{equation}
so that
\begin{equation}\label{dtzetavsv}
\partial_{ t}{\zeta} \ + \ \nabla\cdot (\frac{h_1h_2}{h_1+\gamma h_2}v) \ = \ 0 \ .\end{equation}

In dimension $d=1$, identity~\eqref{eqnv} (assuming that $v\to 0$ as $|x|\to\infty$) uniquely defines $v$ as the {\em shear mean velocity}:
 \begin{equation}\label{eqn:defbarv}
\partial_x (\frac{h_1h_2}{h_1+\gamma h_2}v) \ = \ \partial_x (h_2\b u_2) \ = \ -\partial_x(h_1\b u_1) \ \ \text{ if and only if }\ \ v \ = \ \b u_2 \ - \ \gamma \b u_1 \ .
\end{equation}

However, such an explicit expression is not available in dimension $d=2$. In that case, we make use of the following operator, defined in~\cite{BonaLannesSaut08}.
\begin{Lemma}\label{LemQ}
Assume that $\xi\in L^\infty(\RR^d)$ be such that $\vert \xi\vert_{L^\infty}<1$. Then for any $W\in L^2(\RR^d)^d$, there exists a unique $V\in  L^2(\RR^d)^d$ such that 
\[ \nabla\cdot \big( (1+\xi) V\big) \ = \ \nabla\cdot W\ .\]
and $\Pi V  =  V$, where $\Pi=\frac{\nabla\nabla^\top}{|D|^2}$ is the orthogonal projector onto the gradient vector fields of $L^2(\RR^d)^d$.

Moreover, one has $V=\mfQ[\xi]W$, where $\mfQ[\xi]:L^2(\RR^d)^d\to L^2(\RR^d)^d$ is defined by
\[ \mfQ[\xi]:U\mapsto \sum_{n=0}^\infty (-1)^n\big(\Pi(\xi\Pi\cdot)\big)^n (\Pi U).\]

Furthermore, if $\xi\in H^{s}(\RR^d)$ and $W\in H^s(\RR^d)^d$ with $s\geq t_0+1,t_0>d/2$, then $\mfQ[\xi]W\in H^s(\RR^d)^d$ and
\[\big\vert \mfQ[\xi]W\big\vert_{H^s} \ \leq \ C\Big(\vert \xi\vert_{H^s},\frac{1}{1-\vert\xi\vert_{L^\infty}}\Big)\big\vert W\big\vert_{H^s}.\]
\end{Lemma}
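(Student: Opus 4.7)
The plan is to reformulate the two defining conditions on $V$ as a single fixed-point equation on the subspace of gradient vector fields, solve it by a Neumann series in $L^2$, and then propagate the resulting regularity to $H^s$ via tame product estimates.

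First I would invoke the Helmholtz decomposition of $L^2(\RR^d)^d$, which identifies $\Pi$ as the orthogonal projector onto the space of gradient fields (its kernel being the divergence-free vector fields). Under this decomposition, the condition $\Pi V = V$ expresses that $V$ is a gradient, while $\nabla\cdot((1+\xi)V-W)=0$ is equivalent to $\Pi((1+\xi)V-W)=0$. Using $\Pi V = V$, these two conditions together unfold into
\[ (I + \Pi(\xi\,\cdot\,))V \ = \ \Pi W. \]
Since $\Pi$ has operator norm exactly $1$ on $L^2(\RR^d)^d$ and multiplication by $\xi$ has operator norm bounded by $|\xi|_{L^\infty} < 1$, the operator $\Pi(\xi\,\cdot\,)$ is a strict contraction on $L^2$, and the Neumann series
\[ V \ = \ \sum_{n=0}^\infty (-1)^n (\Pi(\xi\,\cdot\,))^n \Pi W \ = \ \mfQ[\xi] W \]
converges in $L^2(\RR^d)^d$ and defines the unique $L^2$ solution, with $|V|_{L^2}\leq(1-|\xi|_{L^\infty})^{-1}|W|_{L^2}$; uniqueness follows from the same contraction estimate applied to the difference of two candidates.

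For the $H^s$ bound with $s\geq t_0+1 > d/2+1$, I would not attempt to sum the series directly at the $H^s$ level, where $\Pi(\xi\,\cdot\,)$ need not be contracting. Instead, I would treat $V = \Pi W - \Pi(\xi V)$ as a bootstrap identity and argue by induction on $s$, using that $\Pi$ is a bounded Fourier multiplier on every $H^s$. The main obstacle is ensuring that the coefficient in front of $|\xi|_{L^\infty}|V|_{H^s}$ produced by tame product estimates does not destroy the estimate: the crude Moser bound $|\xi V|_{H^s}\leq C_s(|\xi|_{L^\infty}|V|_{H^s}+|\xi|_{H^s}|V|_{L^\infty})$ comes with constant $C_s>1$ in general. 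To circumvent this, I would appeal to the Bony paraproduct decomposition $\xi V = T_\xi V + T_V\xi + R(\xi,V)$, in which the paraproduct obeys the sharp bound $|T_\xi V|_{H^s}\lesssim|\xi|_{L^\infty}|V|_{H^s}$ while the two remaining pieces are strictly smoother and controlled by $C(|\xi|_{H^s})|V|_{H^{s-1}}$. Combined with the Sobolev embedding $H^{t_0}\hookrightarrow L^\infty$ and the $L^2$ bound already established, this closes the induction on $s$ and yields a final constant depending only on $|\xi|_{H^s}$ and $(1-|\xi|_{L^\infty})^{-1}$, as stated.
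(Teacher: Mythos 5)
The paper gives no proof of this lemma at all --- it is quoted from \cite{BonaLannesSaut08} --- so your proposal can only be measured against the standard argument there. Your first part is fine and is essentially that argument: identifying $\nabla\cdot F=0$ with $\Pi F=0$, rewriting the two conditions as $(I+\Pi(\xi\,\cdot\,))V=\Pi W$ on the range of $\Pi$, and summing the Neumann series in $L^2$ using $\Vert\Pi\Vert_{L^2\to L^2}=1$ and $\Vert\xi\,\cdot\,\Vert_{L^2\to L^2}\leq\vert\xi\vert_{L^\infty}<1$ gives existence, uniqueness and the $L^2$ bound exactly as needed.

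The $H^s$ step, however, has a genuine gap, and it is precisely the obstacle you yourself identified: the paraproduct estimate does not come with constant $1$. The bound $\vert T_\xi V\vert_{H^s}\lesssim\vert\xi\vert_{L^\infty}\vert V\vert_{H^s}$ hides an almost-orthogonality constant $C>1$ (adjacent Littlewood--Paley blocks of $T_\xi V$ overlap, whatever decomposition you choose), so after writing $V=\Pi W-\Pi T_\xi V-\Pi\big(T_V\xi+R(\xi,V)\big)$ you are left trying to absorb $C\,\vert\xi\vert_{L^\infty}\vert V\vert_{H^s}$ into the left-hand side, which fails whenever $\vert\xi\vert_{L^\infty}$ is close to $1$ --- the same defect as the crude Moser bound you were trying to avoid. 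The device that actually closes the estimate is to commute $\Lambda^s$ with the fixed-point identity rather than decompose the product: from
\begin{equation*}
\Lambda^s V\ +\ \Pi\big(\xi\,\Lambda^s V\big)\ =\ \Pi\Lambda^s W\ -\ \Pi\big([\Lambda^s,\xi]V\big),
\end{equation*}
the top-order term keeps the multiplication by $\xi$ intact in physical space, so $\big\vert\Pi(\xi\Lambda^sV)\big\vert_{L^2}\leq\vert\xi\vert_{L^\infty}\vert\Lambda^sV\vert_{L^2}$ holds with constant exactly one, and hence $(1-\vert\xi\vert_{L^\infty})\vert V\vert_{H^s}\leq\vert W\vert_{H^s}+\big\vert[\Lambda^s,\xi]V\big\vert_{L^2}$. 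The commutator is then controlled by a Kato--Ponce type estimate, $\big\vert[\Lambda^s,\xi]V\big\vert_{L^2}\leq C(\vert\xi\vert_{H^s})\vert V\vert_{H^{s-1}}$ for $s\geq t_0+1$, and one concludes by descending induction (or interpolation) on the regularity index down to the $L^2$ bound already obtained. With that replacement your outline becomes a complete proof; as written, the absorption step does not go through.
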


This allows to define $v$ as the unique gradient solution to~\eqref{eqnv}.
\begin{Definition} \label{defV} Let $\zeta\in L^\infty(\RR^d)$ be such that $\epsilon\vert \zeta\vert_{L^\infty}<1$ and $\vert \epsilon\zeta-\beta b \vert_{L^\infty}<\delta^{-1}$, so that 
\[ \frac{h_1h_2}{h_1+\gamma h_2} \ = \ \frac{1}{\gamma+\delta}(1+\xi), \quad \text{with} \quad |\xi|_{L^\infty}<1.\]
 Then we define $v$ as the unique gradient solution to~\eqref{eqnv}; or, in other words,
 \[ v \ = \ -(\gamma+\delta)\mfQ[\xi](h_1 \b u_1) \ = \  (\gamma+\delta)\mfQ[\xi](h_2 \b u_2) \ .\]
\end{Definition}
Note that the condition $ |\xi|_{L^\infty}<1$ is ensured by the following:
\[ \xi \ = \ (\gamma+\delta)\frac{h_1h_2}{h_1+\gamma h_2}-1 \ = \ \frac{h_1(\delta h_2-1)+\gamma h_2 (h_1-1)}{h_1+\gamma h_2} \ = \ 1-\frac{h_1(2-\delta h_2)+\gamma h_2 (2-h_1)}{h_1+\gamma h_2} ,\]
and $\epsilon\vert \zeta\vert_{L^\infty}<1$ yields  $0\leq h_1  \leq 2$ whereas $\vert \epsilon\zeta-\beta b \vert_{L^\infty}<\delta^{-1}$ yields $0\leq h_2  \leq 2\delta^{-1}$.
\begin{Remark}\label{remd1}
In the one dimensional space ($d=1$), one has $\Pi=Id$, and one can check that the operator $\mfQ$ simply reduces to $\mfQ[\xi]W=\frac{1}{1+\xi}W$, so that one recovers 
\[ v \ = \ \b u_2-\gamma\b u_1 \quad \text{ and } \quad \b u_1 \ = \ \frac{-h_2v}{h_1+\gamma h_2} \ , \ \b u_2 \ = \ \frac{h_1 v}{h_1+\gamma h_2} \ .\] 
Note also that in that case, conditions $\epsilon\vert \zeta\vert_{L^\infty}<1$ and $\vert \epsilon\zeta-\beta b \vert_{L^\infty}<\delta^{-1}$ may be replaced by the slightly less stringent non-vanishing depth condition: $h_1=1-\epsilon\zeta>0,h_2=\delta^{-1}+\epsilon\zeta-\beta b>0$.
\end{Remark}

Let us emphasize again that in the case $d=2$, there is no reason to think that $v=\b u_2-\gamma\b u_1$ holds, even with precision $\O(\mu)$; and in particular that $\b u_2-\gamma\b u_1$ is a gradient vector fields. In the same way,
one would like to write, seeing~\eqref{eqnv}, $\b u_2=\delta \mfQ[\epsilon\delta\zeta] \big(\frac{h_1h_2}{h_1+\gamma h_2}v\big)$ and $\b u_1=-\mfQ[-\epsilon\zeta]\big( \frac{h_1h_2}{h_1+\gamma h_2}v\big)$; but unfortunately, it is not clear that $\b u_1,\b u_2$ are gradient vector fields (as a matter of fact, their second order expansion tends to show that it is not true). However, one has the following expansion:
\begin{Proposition}\label{prop:devu1u2v}
Let $s\geq t_0+1$, $t_0>d/2$, $\psi\in L^2_{\rm loc}(\RR^d),\nabla \psi\in H^{s+11/2}(\RR^d)^d$ and $\zeta,b\in  H^{s+5}(\RR^d)$ be such that 
\[ \exists h_{0}>0 \text{ such that }\quad   1-\epsilon \big\vert \zeta\big\vert_{L^\infty} \geq\ h_{0}\ >\ 0, \quad  \frac1\delta -\big\vert \beta b-\epsilon\zeta \big\vert_{L^\infty} \geq\ h_{0}\ >\ 0. \]
 Then one has
\begin{align}
\label{u1vsv1} \big\vert \nabla \psi \ - \ \t u_1 \big\vert_{H^s}  \ +  \ \big\vert \b u_1 \ - \ \t u_1 \big\vert_{H^s}  &\leq \mu  M(s+2)\big\vert \nabla \psi \big\vert_{H^{s+2}}, \\
\label{u2vsv1}\big\vert \nabla H^{\mu,\delta}\psi \ -\ \t u_2 \big\vert_{H^s} \ + \ \big\vert \b u_2\ -\ \t u_2 \big\vert_{H^s}  &\leq \mu  M(s+7/2)\big\vert \nabla \psi \big\vert_{H^{s+3}}, \\
\label{u1vsv2}\big\vert \nabla\psi\ -\ \t u_1\ -\ \mu \mfQ[-\epsilon\zeta]( h_1 \T[h_1,0] \t u_1) \big\vert_{H^s}  &\leq \mu^2  M(s+4)\big\vert \nabla \psi \big\vert_{H^{s+4}} ,\\
\label{u2vsv2}\big\vert \nabla H^{\mu,\delta}\psi \ -\  \t u_2\  -\ \mu\delta \mfQ[\delta\epsilon\zeta]( h_2 \T[h_2,\beta b] \t u_2)\big\vert_{H^s} &   \leq \mu^2  M(s+11/2)\big\vert \nabla \psi \big\vert_{H^{s+5}} ,
\end{align}
where we denote $\t u_1\equiv -\mfQ[-\epsilon\zeta] \big(\frac{h_1h_2}{h_1+\gamma h_2}v\big)$, $\t u_2\equiv \delta \mfQ[\delta\epsilon\zeta] \big(\frac{h_1h_2}{h_1+\gamma h_2}v\big)$.
\end{Proposition}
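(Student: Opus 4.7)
The strategy is to express $\t u_1$ and $\t u_2$ as images of $\b u_1$ and $\b u_2$ under the linear operator $\mfQ[\cdot]$ of Lemma~\ref{LemQ}, and then to transfer the asymptotic expansions of Propositions~\ref{prop:devG1} and~\ref{prop:devH} through this operator using its tame continuity estimate. The key algebraic observation is that, by~\eqref{eqnv} and the uniqueness clause in Lemma~\ref{LemQ}, $\t u_1$ is the unique gradient vector field satisfying $\nabla\cdot(h_1\t u_1) = \nabla\cdot(h_1\b u_1)$, whence $\t u_1 = \mfQ[-\epsilon\zeta](h_1\b u_1)$, and an entirely analogous identity holds for $\t u_2$ (with the $\mfQ$-argument chosen so that the operator is consistent with Definition~\ref{defV} and with the one-dimensional formulae of Remark~\ref{remd1}). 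Since $\nabla\psi$ is itself a gradient, the same uniqueness furnishes the fixed-point relation $\nabla\psi = \mfQ[-\epsilon\zeta](h_1\nabla\psi)$, and similarly for $\nabla H^{\mu,\delta}\psi$.

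For the first-order bounds~\eqref{u1vsv1} and~\eqref{u2vsv1}, linearity together with the fixed-point relations gives
\[
\t u_1 - \nabla\psi \ = \ \mfQ[-\epsilon\zeta]\bigl(h_1(\b u_1 - \nabla\psi)\bigr),
\]
and an analogous identity for $\t u_2 - \nabla H^{\mu,\delta}\psi$. The continuity estimate of Lemma~\ref{LemQ}, combined with a standard tame product estimate for multiplication by $h_1$ (resp.~$h_2$) and with~\eqref{u1vsPsi1} (resp.~\eqref{Hpsivsu21}), yields at once the announced $\O(\mu)$ control on $|\nabla\psi - \t u_1|_{H^s}$ and $|\nabla H^{\mu,\delta}\psi - \t u_2|_{H^s}$. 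The remaining bounds on $|\b u_i - \t u_i|_{H^s}$ then follow by the triangle inequality with the same Propositions.

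For the second-order estimates~\eqref{u1vsv2} and~\eqref{u2vsv2}, I would rewrite~\eqref{Psivsu1} and~\eqref{Hpsivsu22} as $\nabla\psi = \b u_1 + \mu\T[h_1,0]\b u_1 + r_1$ and $\nabla H^{\mu,\delta}\psi = \b u_2 + \mu\T[h_2,\beta b]\b u_2 + r_2$ with $|r_i|_{H^s} = \O(\mu^2)$ at the indicated Sobolev orders. Applying $\mfQ[-\epsilon\zeta](h_1\,\cdot)$ to the first identity and the corresponding operator to the second, and invoking the fixed-point relations and linearity, produces
\[
\nabla\psi - \t u_1 - \mu\,\mfQ[-\epsilon\zeta]\bigl(h_1\T[h_1,0]\b u_1\bigr) \ = \ \mfQ[-\epsilon\zeta](h_1 r_1) \ = \ \O(\mu^2),
\]
together with an identical formula for the lower layer. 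It only remains to replace $\b u_1$ by $\t u_1$ inside the $\T$-term at the cost of a further $\O(\mu^2)$ error, which is legitimate since the first-order estimate~\eqref{u1vsv1} just proved, combined with the continuity of $\T[h_1,0]$ and of $\mfQ[-\epsilon\zeta]$ on the appropriate Sobolev spaces, provides exactly this control; the same mechanism handles~\eqref{u2vsv2} via~\eqref{u2vsv1}.

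The main obstacle is bookkeeping rather than analysis: Lemma~\ref{LemQ} requires $|\xi|_{L^\infty}<1$ (secured by the non-vanishing depth hypotheses on $h_1,h_2$) together with a Sobolev control of $|\xi|_{H^s}$, while each multiplication by $h_i$ and each application of $\T[\cdot,\cdot]$ consumes a few derivatives of $\zeta$ and $b$. One must verify that the Sobolev indices $s+2$, $s+7/2$, $s+4$, $s+11/2$ appearing in~\eqref{u1vsv1}--\eqref{u2vsv2} exactly absorb the losses incurred in Propositions~\ref{prop:devG1} and~\ref{prop:devH}, the half-derivative asymmetry between the two layers being inherited from the additional loss present in~\eqref{Hpsivsu22} compared to~\eqref{Psivsu1}. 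Beyond this accounting, the argument is a direct transfer of existing estimates through the bounded operator $\mfQ$.
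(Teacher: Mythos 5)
Your proposal is correct and takes essentially the same route as the paper: both arguments rest on the uniqueness-of-gradient-solutions and continuity statements of Lemma~\ref{LemQ} to transfer the expansions of Propositions~\ref{prop:devG1} and~\ref{prop:devH} through $\mfQ$, and both obtain the second-order bounds by trading $\b u_i$ for $\t u_i$ inside the $\mu$-terms at $\O(\mu^2)$ cost via the first-order estimates. Your ``fixed-point'' identities $\nabla\psi=\mfQ[-\epsilon\zeta](h_1\nabla\psi)$ and $\t u_1=\mfQ[-\epsilon\zeta](h_1\b u_1)$ are merely a repackaging of the paper's divergence-identity computation followed by the uniqueness clause of Lemma~\ref{LemQ}, so the two proofs coincide up to presentation.
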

\begin{proof}
The first estimate follows from 
\[ \nabla\cdot(h_1\nabla\psi) \ = \ \nabla\cdot (h_1\b u_1)+\nabla\cdot \big(h_1 (\nabla\psi-\b u_1) \big) \ = \ - \nabla\cdot \big( \frac{h_1h_2}{h_1+\gamma h_2}v\big)+\nabla\cdot \big(h_1 (\nabla\psi-\b u_1) \big), \]
where we used identity~\eqref{eqnv}. Consequently, Lemma~\ref{LemQ} yields
\[ \nabla\psi \ = \ -\mfQ[-\epsilon\zeta] \big( \frac{h_1h_2}{h_1+\gamma h_2}v-h_1 (\nabla\psi-\b u_1) \big) \ .\]
The control of $\big\vert \nabla \psi \ - \ \t u_1 \big\vert_{H^s}$ as in estimate~\eqref{u1vsv1} is now a consequence of Proposition~\ref{prop:devG1}, and the continuity of the operator $\mfQ$ expressed in Lemma~\ref{LemQ}. The control of $\big\vert \nabla \psi \ - \ \t u_1 \big\vert_{H^s}$ is then deduced, using once again Proposition~\ref{prop:devG1} and triangular inequality. Estimate~\eqref{u1vsv1} is proved.

 Estimate~\eqref{u2vsv1} is obtained in the same way, but using the control of $\big\vert \b u_2-\nabla H^{\mu,\delta}\psi\big\vert_{H^s}$ displayed in Proposition~\ref{prop:devH}.

Following the same strategy, one order further, yields
\begin{align*} \nabla\cdot(h_1\nabla\psi) \ &= \ \nabla\cdot (h_1\b u_1)+\mu \nabla\cdot{h_1 \T[h_1,0]\b u_1}+\nabla\cdot \big(h_1 (\nabla\psi-\b u_1-\mu\T[h_1,0]\b u_1) \big) \\
&= \ -\nabla\cdot\big( \frac{h_1h_2}{h_1+\gamma h_2}v\big)-\mu \nabla\cdot\big(  h_1 \T[h_1,0] \mfQ[-\epsilon\zeta] \big(\frac{h_1h_2}{h_1+\gamma h_2}v\big)\big) \\
& \qquad + \nabla\cdot\Big(h_1 (\nabla\psi-\b u_1-\mu\T[h_1,0]\b u_1) +\mu h_1 \T[h_1,0]\big\{\b u_1+\mfQ[-\epsilon\zeta] \big(\frac{h_1h_2}{h_1+\gamma h_2}v\big) \big\}\Big).
\end{align*}
The last term in the identity above is estimated in part thanks to Proposition~\ref{prop:devG1}, and in part thanks to the first order estimate~\eqref{u1vsv1}. Estimate~\eqref{u1vsv2} then follows as above from the definition and continuity of the the operator $\mfQ$; see Lemma~\ref{LemQ}.

Estimate~\eqref{u2vsv2} is obtained in the same way, and we omit the detailed calculations.
\end{proof}

One deduces from Proposition~\ref{prop:devu1u2v} the following approximate equation equivalent to~\eqref{eqn:GNGN} (withdrawing again $\O(\mu^2)$ terms):
\begin{multline}\label{eqn:GNGNv}
\partial_{ t}\Big(\t u_2 -\gamma \t u_1 +\mu \big(\delta \mfQ[\delta\epsilon\zeta]( h_2 \T[h_2,\beta b] \t u_2)-   \gamma \mfQ[-\epsilon\zeta]( h_1 \T[h_1,0] \t u_1 )\big)\Big)\ + \ (\gamma+\delta)\nabla{\zeta}\\
 \ + \ \frac{\epsilon}{2} \nabla\Big(|\big(I+\mu\delta\mfQ[\delta\epsilon\zeta]h_2\T[h_2,\beta b]\big)\t u_2 |^2 -\gamma |\big(I+\mu \mfQ[-\epsilon\zeta]h_1\T[h_1,0]\big)\t u_1|^2 \Big) \\
= \ \mu\epsilon\nabla\t\N^{\mu,\delta}_0-\frac{\gamma+\delta}{\Bo}\frac{\nabla \big(k(\epsilon\sqrt\mu\zeta)\big)}{{\epsilon\sqrt\mu}} \ + \ \O(\mu^2)\ ,
\end{multline}
where we denote $\t u_1\equiv -\mfQ[-\epsilon\zeta] \big(\frac{h_1h_2}{h_1+\gamma h_2}v\big)$, $\t u_2\equiv \delta \mfQ[\epsilon\delta\zeta] \big(\frac{h_1h_2}{h_1+\gamma h_2}v\big)$, and
\[\t\N^{\mu,\delta}_0 \ \equiv \ \dfrac{\big(-h_2\nabla\cdot \t u_2+\beta(\nabla{b})\cdot (\t u_2) \big)^2\ -\ \gamma\big(h_1\nabla\cdot\t u_1 \big)^2}{2}.
\]

\begin{Proposition}[Consistency] \label{cons:GNv}
Let $U^\p\equiv(\zeta^\p,\psi^\p)^\top$ be a family of solutions to the full Euler system~\eqref{eqn:EulerCompletAdim} such that there exists $T>0$, $s\geq0$ for which $(\zeta^\p,\nabla\psi^\p)^\top$ is bounded in $L^\infty([0,T);H^{s+N})^{d+1}$ ($N$ sufficiently large), uniformly with respect to $(\mu,\epsilon,\delta,\gamma)\equiv \p\in \P_{SW}$; see Definition~\ref{Def:Regimes}. Moreover, assume that $b\in H^{s+N}$ and
\[ \exists h_{0}>0 \text{ such that }\quad   1-\epsilon \big\vert \zeta\big\vert_{L^\infty} \geq\ h_{0}\ >\ 0, \quad  \frac1\delta -\big\vert \beta b-\epsilon\zeta \big\vert_{L^\infty} \geq\ h_{0}\ >\ 0. \]
Define $v^\p$ through Definitions~\ref{defU1U2} and~\ref{defV}. Then $(\zeta^\p,v^\p)^\top$  satisfy~\eqref{dtzetavsv} and~\eqref{eqn:GNGNv}, the latter up to a remainder term, $R$, bounded by
\[ \big\Vert R \big\Vert_{L^\infty([0,T);H^s)^d} \ \leq \ \mu^2\ C \ ,\]
with $C=C(h_0^{-1},\mu_{\max},\delta_{\min}^{-1},\delta_{\max},\big\vert b \big\vert_{H^{s+n}},\big\Vert \zeta^\p\big\Vert_{L^\infty([0,T);H^{s+N})} ,\big\Vert\nabla\psi^\p\big\Vert_{L^\infty([0,T);H^{s+N})^d})$, uniform with respect to $(\mu,\epsilon,\delta,\gamma)\in \P_{SW}$.
\end{Proposition}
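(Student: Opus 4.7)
The plan is to mimic directly the proof of Proposition~\ref{cons:GNu1u2}, substituting the expansions of Proposition~\ref{prop:devu1u2v} for those of Propositions~\ref{prop:devG1} and~\ref{prop:devH}. I begin by observing that~\eqref{dtzetavsv} is in fact exact, not merely approximate: by Proposition~\ref{prop:idGvsU1U2}, the first equation of~\eqref{eqn:EulerCompletAdim} rewrites $\partial_t\zeta=-\nabla\cdot(h_2\b u_2)$, and by Definition~\ref{defV} combined with the defining identity~\eqref{eqnv}, $\nabla\cdot(h_2\b u_2)=\nabla\cdot(\tfrac{h_1h_2}{h_1+\gamma h_2}v)$. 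Note also that the hypothesis on $h_1,h_2$ guarantees that $v$ is well-defined, thanks to Lemma~\ref{LemQ}.

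For the second equation, I subtract~\eqref{eqn:GNGNv} from the second line of~\eqref{eqn:EulerCompletAdim} and decompose the resulting residual as $R=R_I+R_{II}+R_{III}$, where $R_I$ collects the time-derivative discrepancy, $R_{II}$ the quadratic nonlinearity discrepancy, and $R_{III}=-\mu\epsilon\nabla(\N^{\mu,\delta}-\t\N^{\mu,\delta}_0)$. The point is that estimates~\eqref{u1vsv2} and~\eqref{u2vsv2} of Proposition~\ref{prop:devu1u2v} play exactly the role that~\eqref{Psivsu1} and~\eqref{Hpsivsu22} played in the proof of Proposition~\ref{cons:GNu1u2}: they give
\[
\bigl\vert \nabla\psi - (I+\mu\mfQ[-\epsilon\zeta]h_1\T[h_1,0])\t u_1\bigr\vert_{H^s}+\bigl\vert \nabla H^{\mu,\delta}\psi - (I+\mu\delta\mfQ[\delta\epsilon\zeta]h_2\T[h_2,\beta b])\t u_2\bigr\vert_{H^s}\ \leq\ \mu^2\,M(s+11/2)\,\vert\nabla\psi\vert_{H^{s+5}}.
\]
Combined with the tame-product estimates in $H^s$ already used in the previous proof, this yields $\Vert R_{II}\Vert_{L^\infty([0,T);H^s)^d}\lesssim \mu^2 C$ provided $N$ is large enough. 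For $R_{III}$, I write $\tfrac1\mu G^\mu\psi=-\nabla\cdot(h_2\b u_2)=\nabla\cdot(h_1\b u_1)$ via identity~\eqref{idGvsU1U2}, and use the first-order estimates~\eqref{u1vsv1},~\eqref{u2vsv1} together with the continuity of $\mfQ$ furnished by Lemma~\ref{LemQ} to replace $\b u_i$ by $\t u_i$ at cost $\O(\mu)$; the same algebraic manipulation as in Proposition~\ref{cons:GNu1u2}, with the extra factor $\mu\epsilon$ in front, converts this into an $\O(\mu^2)$ estimate.

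The main obstacle is $R_I$, which demands controlling time derivatives of the quantities above. The strategy is the one briefly outlined at the end of the previous proof: differentiate~\eqref{eqn:Laplace1}--\eqref{eqn:Laplace2} in time to see that $\partial_t\phi_i$ solves an inhomogeneous Laplace problem whose data involves $\partial_t\zeta$, and then reproduce the derivation of Proposition~\ref{prop:devu1u2v} with these time-differentiated quantities. An additional difficulty compared with Proposition~\ref{cons:GNu1u2} is that $\mfQ[-\epsilon\zeta]$ and $\mfQ[\delta\epsilon\zeta]$ depend on $\zeta$, so applying $\partial_t$ to $\t u_i$ generates commutator terms of the form $\partial_t\mfQ[\xi]$; these can be handled by expanding the Neumann series defining $\mfQ$ in Lemma~\ref{LemQ} and estimating each summand, producing an expression controlled by $\partial_t\zeta$ in high Sobolev norm. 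Finally, $\partial_t\zeta$ and $\partial_t\nabla\psi$ are themselves bounded in terms of $(\zeta,\nabla\psi)$ in higher Sobolev norms thanks to the full Euler system~\eqref{eqn:EulerCompletAdim} (with a loss of derivatives compensated by taking $N$ large). Putting everything together gives $\Vert R_I\Vert_{L^\infty([0,T);H^s)^d}\lesssim\mu^2 C$, and the three estimates combine to yield the proposition.
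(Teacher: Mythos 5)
Your proposal is correct and takes essentially the same approach as the paper: the paper's proof consists of the single remark that one repeats the argument of Proposition~\ref{cons:GNu1u2}, replacing the expansions of Propositions~\ref{prop:devG1}--\ref{prop:devH} by those of Proposition~\ref{prop:devu1u2v}, which is exactly the decomposition $R=R_I+R_{II}+R_{III}$ and substitution of~\eqref{u1vsv1}--\eqref{u2vsv2} that you carry out. Your observations that~\eqref{dtzetavsv} is exact (via~\eqref{idGvsU1U2} and~\eqref{eqnv}) and that the $\zeta$-dependence of $\mfQ$ generates extra commutator terms in $R_I$, controllable through the Neumann series of Lemma~\ref{LemQ} and loss-of-derivatives estimates for $\partial_t\zeta,\partial_t\nabla\psi$, are the right supplementary points that the paper leaves implicit.
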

The Proposition is obtained as in the proof of Proposition~\ref{cons:GNu1u2}, but using the asymptotic expansions of Proposition~\ref{prop:devu1u2v}; we omit its proof.

\paragraph{Unidimensional case ($d=1$).} Recall that in the one dimensional space, one has simply
\[(\gamma+\delta)\mfQ[\xi]V \ \equiv \ \frac{1}{h_1+\gamma h_2}V \ , \qquad  \mfQ[-\epsilon\zeta]V \ \equiv \ \frac{1}{h_1}V \quad \text{ and } \quad  \delta\mfQ[\delta\epsilon\zeta]V \ \equiv \ \frac{1}{h_2}V \ ,\]
for any $V\in L^2(\RR)$, and denoting $h_1\equiv 1-\epsilon \zeta$ and $h_2\equiv\delta^{-1}+\epsilon\zeta$.
 In particular, one can check that $\t u_1=\b u_1=\frac{-h_2\ v}{h_1+\gamma h_2}$ and $\t u_2=\b u_2=\frac{h_1\ v}{h_1+\gamma h_2}$.
The system~\eqref{dtzetavsv},\eqref{eqn:GNGNv} thus becomes
\begin{equation}\label{eqn:GNGNd1}
\left\{ \begin{array}{l}
\dsp \partial_{ t}{\zeta} \ + \ \partial_x (\frac{h_1h_2\ v}{h_1+\gamma h_2}) \ = \ 0 \ ,\\ \\
\partial_{ t}\Big(v +  \mu \T[h_2,\beta b]\big(\frac{h_1\ v}{h_1+\gamma h_2}\big)-  \mu \gamma \T[h_1,0]\big(\frac{-h_2\ v}{h_1+\gamma h_2}\big)\Big)\ + \ (\gamma+\delta)\partial_x{\zeta}\\
\qquad\qquad \ + \ \frac{\epsilon}{2} \partial_x \Big(\frac{(h_1)^2-\gamma (h_2)^2}{(h_1+\gamma h_2)^2}v^2\Big) \ = \ \mu\epsilon\partial_x\R^{\mu,\delta}_0-\frac{\gamma+\delta}{\Bo}\frac{\partial_x \big(k(\epsilon\sqrt\mu\zeta)\big)}{{\epsilon\sqrt\mu}} \ + \ \O(\mu^2)\ ,
\end{array}\right.
\end{equation}
where we denote 
\begin{multline*}\R^{\mu,\delta}_0 \ \equiv \ \frac12\big(-h_2\partial_x\big(\frac{h_1\ v}{h_1+\gamma h_2}\big)+\beta(\partial_x {b})\big(\frac{h_1\ v}{h_1+\gamma h_2}\big)\big)^2\ -\ \frac\gamma2\big(h_1\partial_x\big(\frac{-h_2\ v}{h_1+\gamma h_2}\big) \big)^2 \\
 - \big(\frac{h_1\ v}{h_1+\gamma h_2}\big) \T[h_2,\beta b]\big(\frac{h_1\ v}{h_1+\gamma h_2}\big)+\gamma\big(\frac{-h_2\ v}{h_1+\gamma h_2}\big) \T[h_1,0]\big(\frac{-h_2\ v}{h_1+\gamma h_2}\big).
\end{multline*}

If, additionally, one assumes the bottom is flat (by setting $\beta=0$), then one recovers the following system, as already introduced in~\cite{Duchene13}:
\begin{equation}\label{eqn:GreenNaghdiMean}
\left\{ \begin{array}{l}
\displaystyle\partial_{ t}{\zeta} \ + \ \partial_x \Big(\frac{h_1h_2}{h_1+\gamma h_2}v\Big)\ =\ 0,  \\ \\
\displaystyle\partial_{ t}\Big( v \ + \ \mu\overline{\Q}[h_1,h_2]v \Big) \ + \ (\gamma+\delta)\partial_x{\zeta} \ + \ \frac{\epsilon}{2} \partial_x\Big(\dfrac{h_1^2 -\gamma h_2^2 }{(h_1+\gamma h_2)^2} |v|^2\Big) \ = \ \mu\epsilon\partial_x\big(\overline{\R}[h_1,h_2]v  \big) \\ \hfill -\frac{\gamma+\delta}{\Bo}\frac{\partial_x \big(k(\epsilon\sqrt\mu\zeta)\big)}{{\epsilon\sqrt\mu}}  \ ,
\end{array}
\right.
\end{equation}
where we define:

    \begin{align*}
        \overline{\Q}[h_1,h_2]V \  &\equiv \ \frac{-1}{3h_1 h_2}\Bigg(h_1 \partial_x \Big(h_2^3\partial_x\big(\frac{h_1\ V}{h_1+\gamma h_2} \big)\Big)\ +\ \gamma h_2\partial_x \Big( h_1^3\partial_x \big(\frac{h_2\ V}{h_1+\gamma h_2}\big)\Big)\Bigg), \\
        \overline{\R}[h_1,h_2]V  \  &\equiv \ \frac12  \Bigg( \Big( h_2\partial_x \big( \frac{h_1\ V}{h_1+\gamma h_2} \big)\Big)^2\ -\ \gamma\Big(h_1\partial_x \big(\frac{h_2\ V}{h_1+\gamma h_2} \big)\Big)^2\Bigg)\\
    &\qquad + \frac13\frac{V}{h_1+\gamma h_2}\ \Bigg( \frac{h_1}{h_2}\partial_x\Big( h_2^3\partial_x\big(\frac{h_1\ V}{h_1+\gamma h_2} \big)\Big) \ - \ \gamma\frac{h_2}{h_1}\partial_x\Big(h_1^3\partial_x \big(\frac{h_2\ V}{h_1+\gamma h_2}\big)\Big) \Bigg).
 \end{align*}
 

Proposition~\ref{cons:GNv} thus generalizes the consistency result obtained in~\cite{Duchene13,DucheneIsrawiTalhouk} to the case $d=2$, and to non-flat topography.

\section{Lower order models}\label{sec:lowerorder}
The system of equations~\eqref{dtzetavsv},\eqref{eqn:GNGNv} is very broad, in the sense that it has been obtained with minimal assumptions: allowing $d=1$ and $d=2$, non-flat topography, and in the shallow water regime of Definition~\ref{Def:Regimes}. It is justified by a consistency result (Proposition~\ref{cons:GNv}). As argued in the introduction, the consistency result alone is not sufficient to fully justify a model. In particular, its well-posedness should be confirmed. As a matter of fact, contrarily to the water-wave case~\cite{Alvarez-SamaniegoLannes08}, the well-posedness of the Green-Naghi model in the bi-fluidic case is not clear, and similar systems have been proved to be ill-posed; see~\cite{LiskaMargolinWendroff95} and discussion in~\cite{CotterHolmPercival10}.  In the following subsections, we show that existing models in the literature directly descend from our Green-Naghdi model~\eqref{dtzetavsv},\eqref{eqn:GNGNv}, after additional assumptions (typically, restricting to $d=1$, flat bottom, and/or more stringent regimes), or with a lower precision. Their justification in the sense of consistency is therefore a direct application of Proposition~\ref{cons:GNv}, and stronger results (well-posedness, convergence) are stated when available.

\subsection{The Shallow water (Saint Venant) model}\label{sec:shallowwater}
In this section, we consider only the first order terms in equation~\eqref{eqn:GNGNv} (equivalently, we set $\mu=0$; this corresponds to the assumption that the horizontal velocity is constant across the vertical layers). The system~\eqref{dtzetavsv},\eqref{eqn:GNGNv}, withdrawing $\O(\mu)$ terms, is now simply
\begin{equation}\label{eqn:SW0}
\left\{ \begin{array}{l}
\partial_{ t}{\zeta} \ + \ \nabla\cdot \big(\dfrac{h_1h_2\ v}{h_1+\gamma h_2}\big) \ = \ 0,\\
\partial_{ t}(\t u_2-\gamma \t u_1) \ + \ (\gamma+\delta)\nabla{\zeta} \ + \ \frac{\epsilon}{2} \nabla\Big(|\t u_2 |^2 -\gamma |\t u_1|^2 \Big) 
= \  \frac{\gamma+\delta}{\Bo}\nabla \Delta\zeta ,
\end{array} \right.
\end{equation}
where we recall that $h_1\equiv 1-\epsilon\zeta,\ h_2\equiv \delta^{-1}+\epsilon\zeta$, $\t u_1\equiv -\mfQ[-\epsilon\zeta]\big( \frac{h_1h_2}{h_1+\gamma h_2}v\big)$ and $\t u_2\equiv \delta \mfQ[\epsilon\delta\zeta]\big( \frac{h_1h_2}{h_1+\gamma h_2}v\big)$, where the operator $\mfQ$ is defined in Definition~\ref{LemQ}.

A similar system is obtained
when using a different velocity variable, such as the shear velocity at the interface $V\equiv \nabla H^{\mu,\delta}\psi-\gamma \nabla \psi $. In that case, one obtains the following~\cite{BonaLannesSaut08}:
\begin{equation}\label{eqn:SW2}
\left\{ \begin{array}{l}
\partial_{ t}{\zeta} \ + \ \nabla\cdot (h_1 \mfR[\epsilon\zeta]  V ) \ = \ 0, \\
\partial_{ t} V \ + \ (\gamma+\delta)\nabla{\zeta} \ + \ \frac{\epsilon}{2} \nabla\Big(|V-\gamma \mfR[\epsilon\zeta]  V   |^2 -\gamma |\mfR[\epsilon\zeta]  V |^2 \Big) 
= \  \frac{\gamma+\delta}{\Bo}\nabla \Delta\zeta ,
\end{array} \right.
\end{equation}
where  $\mfR$ is defined similarly as $\mfQ$:  $\mfR[\epsilon\zeta] W$  is the only gradient solution to
\[ \nabla\cdot \big( (h_1+\gamma h_2) \mfR[\epsilon\zeta] W \big) \ = \ \nabla\cdot (h_2W) \ .\]
System (\ref{eqn:SW0}) and system (\ref{eqn:SW2}) are equivalent up to order $\O(\mu)$. In fact from proposition~\ref{prop:devu1u2v}, one has $V=\t u_2  - \gamma \t u_1+\O(\mu),$ and 
\begin{align*}
-\nabla\cdot \big( (h_1 +\gamma h_2)\t u_1\big) &=  -\nabla\cdot (h_1\t u_1)-\gamma \nabla\cdot (h_2\t u_1) \\
& = \nabla\cdot \big(h_2(\t u_2-\gamma \t u_1)\big)\\
& =  \nabla\cdot (h_2 V) +\O(\mu).
\end{align*}
Using the fact that $\t u_1$   is a gradient vector and the definition of operator $\mfR[\epsilon\zeta] $, we deduce that $-\t u_1= \mfR[\epsilon\zeta] V +\O(\mu)$ and thus $-\nabla\cdot (h_1\t u_1)=\nabla\cdot (h_1\mfR[\epsilon\zeta] V) +\O(\mu)$. The definition of $\t u_1$ implies now the equivalence between $(\ref{eqn:SW0})_1$ and $(\ref{eqn:SW2})_1$. In the same way to obtain the equivalence  of  $(\ref{eqn:SW0})_2$ and 
$(\ref{eqn:SW2})_2$ up to order $\O(\mu)$: we just use the two fact $\t u_1= -\mfR[\epsilon\zeta] V +\O(\mu)$ and $\t u_2= V-\gamma\mfR[\epsilon\zeta] V +\O(\mu)$.
\medskip

In one dimension ({\em i.e.} $d=1$), both~\eqref{eqn:SW0} and~\eqref{eqn:SW2} read
\begin{equation}\label{eqn:SW1}
\left\{ \begin{array}{l}
\partial_{ t}{\zeta} \ + \ \partial_x \big(\dfrac{h_1h_2\ v}{h_1+\gamma h_2} \big) \ = \ 0,\\
\partial_{ t} v \ + \ (\gamma+\delta)\partial_x {\zeta} \ + \ \frac{\epsilon}{2} \partial_x \Big(\frac{h_1^2-\gamma h_2^2}{(h_1+\gamma h_2)^2}|v |^2  \Big) 
= \  \frac{\gamma+\delta}{\Bo}\partial_x^3\zeta .
\end{array} \right.
\end{equation}

System~\eqref{eqn:SW2} has been derived and justified in the sense of consistency in~\cite{BonaLannesSaut08}, in the case of a flat bottom, and without surface tension. An equivalent consistency result clearly holds for~\eqref{eqn:SW0} (as a consequence of Proposition~\ref{cons:GNv} in particular). More precisely, one has
 \begin{Proposition}[Consistency]\label{prop:ConsSW}
Let $s\geq0$ and $U^\p\equiv(\zeta^\p,\psi^\p)$ be a family of solutions of the full Euler system~\eqref{eqn:EulerCompletAdim} 
for which $(\zeta^\p,\nabla\psi^\p)^\top$ is bounded in $L^\infty([0,T);H^{s+N})^{d+1}$ with sufficiently large $N$; 
and such that there exists $h_0>0$ with
\[
h_1 \ \equiv\  1-\epsilon\zeta^\p \geq h_0>0, \quad h_2 \ \equiv \ \frac1\delta +\epsilon \zeta^\p\geq h_0>0.
\]
 Define $v$ as in~ definition~\ref{defV}, and $V\equiv \nabla H^{\mu,\delta}\psi -\gamma \nabla \psi $. 

Then $(\zeta,v)^\top$ satisfies~\eqref{eqn:SW0}, up to a remainder $(0,R_1)^\top$; and $(\zeta,V)^\top$ satisfies~\eqref{eqn:SW2}, up to a remainder $(0,R_2)^\top$; with
\[ \big\Vert R_1 \big\Vert_{L^\infty([0,T);H^s)^d} \ + \ \big\Vert R_2 \big\Vert_{L^\infty([0,T);H^s)^d} \ \leq \ C\ \mu,\]
with $C=C(h_0^{-1},\mu_{\max},\delta_{\min}^{-1},\delta_{\max},\big\Vert \zeta^\p \big\Vert_{L^\infty([0,T);H^{s+N})},\big\Vert \nabla\psi^\p \big\Vert_{L^\infty([0,T);H^{s+N})^d})$,  uniform with respect to the parameters $\p\in\P_{SW}$.
\end{Proposition}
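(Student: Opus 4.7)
The plan is to obtain both shallow water systems as first-order truncations of the Green-Naghdi system~\eqref{dtzetavsv}--\eqref{eqn:GNGNv}, so that the argument is essentially a simpler, lower-precision version of the proof of Proposition~\ref{cons:GNv}. For cleanness I would work directly from the full Euler system~\eqref{eqn:EulerCompletAdim}, invoking only the \emph{first-order} expansions in Propositions~\ref{prop:devG1},~\ref{prop:devH} and~\ref{prop:devu1u2v}, which already deliver the advertised $\O(\mu)$ accuracy without any of the $\T[h,\cdot]$ terms.

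For the $(\zeta,v)$-system~\eqref{eqn:SW0}, the mass equation is exact: combining identity~\eqref{idGvsU1U2} with the defining relation $\nabla\cdot(\frac{h_1h_2}{h_1+\gamma h_2}v)=\nabla\cdot(h_2\b u_2)$ and Proposition~\ref{prop:idGvsU1U2} gives $\partial_t\zeta+\nabla\cdot(\frac{h_1h_2}{h_1+\gamma h_2}v)=0$ with zero residual. For the momentum equation I would substitute $\nabla\psi=\t u_1+\O(\mu)$ and $\nabla H^{\mu,\delta}\psi=\t u_2+\O(\mu)$ (estimates~\eqref{u1vsv1}--\eqref{u2vsv1}) into the second equation of~\eqref{eqn:EulerCompletAdim}. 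Each such substitution generates an $\O(\mu)$ residual, controlled by tame product estimates of the form $\vert f^2-g^2\vert_{H^s}\lesssim(\vert f\vert_{H^{\max(t_0,s)}}+\vert g\vert_{H^{\max(t_0,s)}})\vert f-g\vert_{H^s}$ applied to the quadratic fluxes. The contribution $\mu\epsilon\nabla\N^{\mu,\delta}$ is itself already $\O(\mu)$ thanks to the prefactor, once $G^\mu\psi/\mu$, $\nabla\psi$ and $\nabla H^{\mu,\delta}\psi$ are controlled in the relevant high Sobolev norm via the expansions above and the hypothesis on $\nabla\psi^\p$. The residual inside the time derivative is handled exactly as in the proof of Proposition~\ref{cons:GNv}: one differentiates~\eqref{u1vsv1}--\eqref{u2vsv1} in $t$ and then uses the full Euler system itself to trade $\partial_t\zeta$ and $\partial_t\nabla\psi$ against spatial derivatives, at the cost of a loss of derivatives which is absorbed by taking $N$ large enough.

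For the $(\zeta,V)$-system~\eqref{eqn:SW2}, I would follow the equivalence argument already sketched in the text immediately after the statement. Proposition~\ref{prop:devu1u2v} yields $V=\t u_2-\gamma\t u_1+\O(\mu)$; since $\t u_1$ is a gradient field and $-\nabla\cdot((h_1+\gamma h_2)\t u_1)=\nabla\cdot(h_2 V)+\O(\mu)$, the defining relation for $\mfR[\epsilon\zeta]$ together with the continuity of $\mfR$ (a direct transposition of Lemma~\ref{LemQ}) gives $-\t u_1=\mfR[\epsilon\zeta]V+\O(\mu)$ and $\t u_2=V-\gamma\mfR[\epsilon\zeta]V+\O(\mu)$, all estimates being understood in $H^s$. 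Inserting these identities into~\eqref{eqn:SW0} converts the $(\zeta,v)$ system into~\eqref{eqn:SW2} with an extra $\O(\mu)$ discrepancy that is absorbed into $R_2$; the quadratic terms in $V$ and $\mfR[\epsilon\zeta]V$ are again treated by tame product estimates.

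The main technical point, though not a deep one, is bookkeeping: because of the non-local operators $G^\mu$, $H^{\mu,\delta}$, $\mfQ$, $\mfR$ and the substitution of time derivatives via the full Euler system, the constant $C$ must absorb norms of $\zeta^\p$ and $\nabla\psi^\p$ up to the order $s+N$ with $N$ sufficiently large. Uniformity with respect to $\p\in\P_{SW}$ is automatic, since every estimate invoked from Propositions~\ref{prop:devG1},~\ref{prop:devH},~\ref{prop:devu1u2v} and Lemma~\ref{LemQ} is itself uniform on $\P_{SW}$ under the standing assumptions $\delta\in[\delta_{\min},\delta_{\max}]$, $\mu\leq\mu_{\max}$ and the non-vanishing depth condition.
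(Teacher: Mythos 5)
Your proposal follows essentially the same strategy as the paper: the paper obtains the $(\zeta,v)$ consistency as a direct corollary of Proposition~\ref{cons:GNv} by dropping the $\O(\mu)$ Green-Naghdi corrections, handles the $(\zeta,V)$ system via the equivalence argument sketched just before the statement, and cites~\cite{BonaLannesSaut08} for the original derivation, and your fleshing-out of this via the first-order estimates in Propositions~\ref{prop:devG1},~\ref{prop:devH},~\ref{prop:devu1u2v} and the time-derivative/loss-of-derivative bookkeeping is exactly the natural way to carry out that sketch. The one small item you gloss over is that~\eqref{eqn:SW0}--\eqref{eqn:SW2} replace the full curvature contribution $-\frac{\gamma+\delta}{\Bo}\frac{\nabla\big(k(\epsilon\sqrt\mu\zeta)\big)}{\epsilon\sqrt\mu}$ of~\eqref{eqn:EulerCompletAdim} by its linearization $\frac{\gamma+\delta}{\Bo}\nabla\Delta\zeta$, and the corresponding $\O(\epsilon^2\mu\,\Bo^{-1})$ discrepancy must also be absorbed into the remainder (a routine estimate, but one your argument should mention).
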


Let us now mention that the well-posedness of the Cauchy problem for~\eqref{eqn:SW2} (without surface tension and bottom topography) has been studied in~\cite{GuyenneLannesSaut10}, and we reproduce below their result (see~\cite[Theorem~2]{GuyenneLannesSaut10}).
\begin{Proposition}[Well-posedness] Let $d=2$ and $\Bo^{-1}=\beta=0$.
Let $s\geq t_0+1$, $t_0 > 1$, and $U^0 = (\zeta^0,V^0)^\top \in H^s(\RR^2)^{3}$ be such that there exists $h_0>0$ with
\begin{equation}\label{conditionSW}
\min\Big\{ \ 1-\epsilon\big\vert\zeta^0\big\vert_{L^\infty} \ , \ \frac1\delta -\epsilon\big\vert\zeta^0\big\vert_{L^\infty} \ , \  \gamma+\delta-\gamma \frac{\epsilon^2\vert V+(1-\gamma)\mfR[\epsilon\zeta]V\vert^2}{1-\epsilon\big\vert\zeta^0\big\vert_{L^\infty}+\gamma(\delta^{-1}+\epsilon\big\vert\zeta^0\big\vert_{L^\infty})} \ \Big\}\ \geq h_0\   >\ 0,
\end{equation}
 and $\curl V^0 = 0$. Then, there exists $T_{\max} > 0$ and a unique maximal solution $U = (\zeta,V)^\top \in C([0,T_{\max}/\epsilon);H^s(\RR^2)^{3})$  to~\eqref{eqn:SW2} with initial condition $U^0$. 
\end{Proposition}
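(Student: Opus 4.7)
The plan is to prove this via a standard quasilinear energy method, following the approach of~\cite{GuyenneLannesSaut10}. The system~\eqref{eqn:SW2} has the structure of a symmetrizable first-order system perturbed by non-local operators through $\mfR[\epsilon\zeta]$, and the proof has to combine (i) construction of a suitable symmetrizer, (ii) energy estimates uniform over time scales of order $1/\epsilon$, and (iii) a regularization argument to produce actual solutions.

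First, I would rewrite~\eqref{eqn:SW2} in quasilinear form $\partial_t U + \mathcal{A}[U]U = 0$ for $U = (\zeta,V)^\top$, where $\mathcal{A}[U]$ is a (non-local, due to $\mfR[\epsilon\zeta]$) first-order matrix operator. The linearization at $U=0$ is symmetric hyperbolic, with propagation speeds linked to the wave celerity. The curl-free assumption on $V$ is crucial: differentiating the second equation shows $\partial_t \curl V = 0$, so $\curl V \equiv 0$ is propagated by the flow. This allows one to use the identity $V = \Pi V$ (with $\Pi$ the projector on gradient vector fields from Lemma~\ref{LemQ}) throughout the analysis, which is what makes $\mfR[\epsilon\zeta]$ invertible and well-behaved.

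The core is to construct a symmetrizer $S[U]$ which, tested in $H^s$, provides an energy $E^s(U) = (S[U]\Lambda^s U, \Lambda^s U)$ equivalent to $|U|_{H^s}^2$ exactly when~\eqref{conditionSW} holds. The three terms in~\eqref{conditionSW} correspond, respectively, to the non-vanishing depth conditions (ensuring positivity of the $\zeta$-block via $h_1,h_2 \geq h_0$), and to the hyperbolicity threshold preventing Kelvin--Helmholtz-type instabilities at the level of the shallow water system (positivity of the $V$-block). Under this coercivity, I would commute $\Lambda^s$ with the equations, estimate the commutators using Kato--Ponce/Moser tame product inequalities, and estimate the commutators with the non-local operator $\mfR[\epsilon\zeta]$ using continuity and composition properties analogous to those given in Lemma~\ref{LemQ}. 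The resulting differential inequality reads
\begin{equation*}
\frac{d}{dt} E^s(U) \ \leq \ \epsilon \, C\big(E^s(U)\big),
\end{equation*}
where the $\epsilon$ prefactor comes from the fact that every nonlinearity in~\eqref{eqn:SW2} carries an $\epsilon$. Integrating yields control of $|U(t,\cdot)|_{H^s}$ on a time interval $[0,T_{\max}/\epsilon)$, with $T_{\max}$ depending only on $|U^0|_{H^s}$ and $h_0$.

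To construct solutions, I would Friedrichs-mollify the equations at scale $\iota$ (replacing $U$ on the right-hand side of $\mathcal{A}[U]U$ by $J_\iota U$ with $J_\iota$ a regularizing operator), obtaining a family of ODEs in $H^s$ locally well-posed by Cauchy--Lipschitz. The energy estimate above is uniform in $\iota$, and extends the lifespan to $[0,T_{\max}/\epsilon)$ uniformly. Passing to the limit $\iota \to 0$ using standard Aubin--Lions compactness in weaker norms yields a solution $U \in C([0,T_{\max}/\epsilon);H^s)$. Uniqueness and continuity in time of the $H^s$-norm follow from an energy estimate on the difference of two solutions at the $H^{s-1}$ level (Bona--Smith type argument if needed to recover continuity at top regularity). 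The main obstacle is the careful handling of the non-local operator $\mfR[\epsilon\zeta]$: one needs sharp continuity estimates of the type $|\mfR[\epsilon\zeta]W|_{H^s} \lesssim C(1/h_0, |\zeta|_{H^s})|W|_{H^s}$ together with precise commutator bounds of the form $|[\Lambda^s,\mfR[\epsilon\zeta]]W|_{L^2} \lesssim \epsilon |\zeta|_{H^s}|W|_{H^{s-1}}$, so that the symmetrizer strategy goes through and yields an energy bounded coercively by~\eqref{conditionSW} alone.
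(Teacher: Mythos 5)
The paper does not actually prove this Proposition: it is reproduced verbatim from \cite[Theorem~2]{GuyenneLannesSaut10}, so your proposal must be measured against that proof, whose broad strategy you have correctly reconstructed (quasilinear energy method with a symmetrizer, tame product and commutator estimates, existence on a time scale $O(1/\epsilon)$, regularization plus compactness for the construction of solutions). Several of your structural observations are right and match the reference: with $\Bo^{-1}=0$ the right-hand side of the $V$-equation in \eqref{eqn:SW2} is an exact gradient, so $\curl V=0$ is propagated by the flow; the $\epsilon$-independent part of the system is energy-preserving for $(\gamma+\delta)\vert\zeta\vert_{L^2}^2+(\gamma+\delta)^{-1}\vert V\vert_{L^2}^2$, which is why the growth is $O(\epsilon)$ and the lifespan $O(1/\epsilon)$; and \eqref{conditionSW} is indeed the coercivity condition for the relevant energy.

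The genuine soft spot is your treatment of the nonlocal operator. You propose to close the estimates with the continuity bound of Lemma~\ref{LemQ} together with a commutator bound of the type $\vert[\Lambda^s,\mfR[\epsilon\zeta]]W\vert_{L^2}\lesssim\epsilon\vert\zeta\vert_{H^s}\vert W\vert_{H^{s-1}}$, but this does not reach the heart of the matter: $\mfR[\epsilon\zeta]$ depends on the unknown, so quasilinearizing $\nabla\cdot\big(h_1\mfR[\epsilon\zeta]V\big)$ and $\frac{\epsilon}{2}\nabla\big(\vert V-\gamma\mfR[\epsilon\zeta]V\vert^2-\gamma\vert\mfR[\epsilon\zeta]V\vert^2\big)$ produces terms where the derivative of $\zeta\mapsto\mfR[\epsilon\zeta]$ acts on the top-order derivatives of $\zeta$; these are first-order, nonlocal contributions to the principal part, of size $\epsilon\vert V\vert$, and it is precisely their interplay with the $(\gamma+\delta)\nabla\zeta$ term that generates the third (Kelvin--Helmholtz-type) constraint in \eqref{conditionSW}, whose quantity $V+(1-\gamma)\mfR[\epsilon\zeta]V$ is, at this order, the shear $\t u_2-\t u_1$. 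Your symmetrizer is announced but never exhibited, and without the computation showing that its positivity is equivalent to \eqref{conditionSW} --- which is the actual content of the proof in \cite{GuyenneLannesSaut10} --- the inequality $\frac{d}{dt}E^s\leq\epsilon\,C(E^s)$ is asserted rather than established. A minor additional repair: mollifying only the argument of $\mathcal{A}[\cdot]$ does not yield an ODE in $H^s$, since a first-order operator still acts on $U$; one should regularize as $J_\iota\mathcal{A}[J_\iota U]J_\iota U$ (or solve a sequence of linear problems, as is more customary for such systems) and then verify that the $J_\iota$-commutators are compatible with the uniform-in-$\iota$ energy estimate.
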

\begin{Remark} In the case $d=1$, the conditions~\eqref{conditionSW} can be replaced by
\[ h_1 \ \equiv\  1-\epsilon\zeta \geq h_0>0, \quad h_2 \ \equiv \ \frac1\delta +\epsilon \zeta\geq h_0>0, \quad \gamma+\delta-\gamma \frac{\epsilon^2(h_1+h_2)^2}{(h_1+\gamma h_2)^3}|v|^2\geq h_0>0,\]
which is sufficient to write~\eqref{eqn:SW1} as a symmetrizable quasilinear equation; thus classical techniques apply. See~\cite[Theorem~1]{GuyenneLannesSaut10} for the precise result.
\end{Remark}

\subsection{A Green-Naghdi type model in the Camassa-Holm regime}\label{sec:Serre}
The present section is limited to the so-called Camassa-Holm regime (that is
using additional assumptions $\epsilon=\O(\sqrt\mu)$; see Definition~\ref{Def:Regimes}), flat bottom  case ($\beta=0$) and one dimensional space $d=1$. Moreover, we neglect the surface tension component in our model.
All the estimates are now understood uniformly with respect to $(\mu,\epsilon,\delta,\gamma)\in\P_{CH}$.
\medskip

In that case, one can easily check that the following approximations hold for $\overline{\Q}$, $\overline{\R}$ defined in section~\ref{sec:GN}
 \begin{align*}
        \overline{\Q}[h_1,h_2]V \ &= \ - \nu\partial_x^2 V-\epsilon\frac{\gamma+\delta}3 \left((\beta-\alpha)V \partial_x^2\zeta \ + \ (\alpha+2\beta)\partial_x(\zeta\partial_x V)-\beta\zeta\partial_x^2 V\right) \ + \ \O(\mu), \\
        \overline{\R}[h_1,h_2]V  \  &= \ \alpha\left(\frac12 (\partial_x V)^2+\frac13 V \partial_x^2 V\right) \ + \ \O(\sqrt\mu)
 \end{align*}
with \begin{equation}\label{eqn:defnualphabeta}
\nu=\frac{1+\gamma\delta}{3\delta(\gamma+\delta)}\ , \quad \alpha=\dfrac{1-\gamma}{(\gamma+\delta)^2} \quad \text{ and } \quad \beta=\dfrac{(1+\gamma\delta)(\delta^2-\gamma)}{\delta(\gamma+\delta)^3}\ .
\end{equation}

Plugging these expansions into system~\eqref{eqn:GreenNaghdiMean} yields a simplified model, precise with the same order of magnitude as the original model (that is $\O(\mu^2)$) in the Camassa-Holm regime. Furthermore, after several additional transformations, ones may produce an equivalent model (again, in the sense of consistency) which possesses a structure similar to symmetrizable quasilinear systems, thus allowing its full justification. The following system of equations has been introduced and justified by the authors in~\cite{DucheneIsrawiTalhouk}.  
\begin{equation}\label{eq:Serre2mr}\left\{ \begin{array}{l}
\partial_{ t}\zeta +\partial_x\left(\dfrac{h_1 h_2}{h_1+\gamma h_2} v\right)\ =\  0,\\ \\
\mfT[\epsilon\zeta] \left( \partial_{ t}  v + \epsilon \varsigma { v } \partial_x { v } \right) + (\gamma+\delta)q_1(\epsilon\zeta)\partial_x \zeta   \\
\qquad \qquad+\frac\epsilon2 q_1(\epsilon\zeta) \partial_x \left(\frac{h_1^2  -\gamma h_2^2 }{(h_1+\gamma h_2)^2}| v|^2-\varsigma | v|^2\right)=  -  \mu \epsilon\frac23\alpha \partial_x\big((\partial_x  v)^2\big) ,
\end{array} \right. \end{equation}

where 

\begin{equation}\label{defmfT}
{\mfT}[\epsilon\zeta]V \ = \ q_1(\epsilon\zeta)V \ - \ \mu\nu \partial_x \Big(q_2(\epsilon\zeta)\partial_xV \Big),
\end{equation}
with $q_i(X)\equiv 1+\kappa_i X $ ($i=1,2$) and $\kappa_1,\kappa_2,\varsigma$ are defined by :
\begin{equation}\label{defkappa}
\kappa_1 \ = \ \frac{\gamma+\delta}{3\nu}(2\beta-\alpha)  \ = \ 2\dfrac{\delta^2-\gamma}{\gamma+\delta}-\delta\dfrac{1-\gamma}{1+\gamma\delta}, \quad \kappa_2 \ = \ \frac{(\gamma+\delta)\beta}{\nu} \ = \ 3\dfrac{\delta^2-\gamma}{\gamma+\delta},\\
\end{equation}
\begin{equation}\label{defvarsigma}
\varsigma \ = \ \frac{2\alpha-\beta}{3\nu} \ = \ 2\delta\dfrac{1-\gamma}{(\gamma+\delta)(1+\gamma\delta)}-\dfrac{\delta^2-\gamma}{(\gamma+\delta)^2}.
\end{equation}
We recall that the shear mean velocity is uniquely defined in the case of one dimension, $d=1$ (see (\ref{eqn:defbarv}) and (\ref{idGvsU1U2})) by
\begin{equation}\label{eqn:defbarvmr}
\frac1\mu G^{\mu,\delta}[\epsilon\zeta]\psi \ = \ -\partial_x \Big(\frac{h_1h_2}{h_1+\gamma h_2} v\Big). 
\end{equation}
\medskip

System~\eqref{eq:Serre2mr} is fully justified as an asymptotic model by the following results (see~\cite{DucheneIsrawiTalhouk} for a more precise statement):
 \begin{Proposition}[Consistency]
Let $U^\p\equiv(\zeta^\p,\psi^\p)$ be a family of solutions of the full Euler system~\eqref{eqn:EulerCompletAdim} 
for which $(\zeta^\p,\partial_x\psi^\p)^\top$ is bounded in $L^\infty([0,T);H^{s+N})^{2}$ with $s\geq0$ and sufficiently large $N$, and uniformly with respect to $(\mu,\epsilon,\delta,\gamma)\equiv \p\in \P_{CH}$; see Definition~\ref{Def:Regimes}. Moreover, assume 
\begin{equation}\label{depthcond}\tag{H1}
\exists h_{01}>0 \text{ such that }\quad h_1 \ \equiv\  1-\epsilon\zeta^\p \geq\ h_{01}\ >\ 0, \quad h_2 \ \equiv \ \frac1\delta +\epsilon \zeta^\p\geq\ h_{01}\ >\ 0.
\end{equation}
 Define $v^\p$ as in~\eqref{eqn:defbarvmr}.
Then $(\zeta^\p, v^\p)^\top$ satisfies~\eqref{eq:Serre2mr}, up to a remainder $(0,R)^\top$, bounded by
\[ \big\Vert R \big\Vert_{L^\infty([0,T);H^s)} \ \leq \ (\mu^2+\Bo^{-1})\ C,\]
with $C=C(h_{01}^{-1},M,\mu_{\max},\frac1{\delta_{\min}},\delta_{\max},\big\Vert (\zeta^\p , \partial_x\psi^\p)^\top \big\Vert_{L^\infty([0,T);H^{s+N})^{2}}))$,  uniform with respect to the parameters $\p\in\P_{CH}$.
\end{Proposition}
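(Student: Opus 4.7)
The plan is to derive~\eqref{eq:Serre2mr} starting from the one-dimensional, flat-bottom Green-Naghdi system~\eqref{eqn:GreenNaghdiMean}, whose consistency is already established by Proposition~\ref{cons:GNv}. By that Proposition applied in the case $d=1$, $\beta=0$, the family $(\zeta^\p,v^\p)$ solves~\eqref{eqn:GreenNaghdiMean} up to a residual of size $\mu^2 C$ in $L^\infty([0,T);H^s)$. Thus it suffices to show that~\eqref{eqn:GreenNaghdiMean} and~\eqref{eq:Serre2mr} coincide up to $(\mu^2+\Bo^{-1})C$ on solutions satisfying the uniform bounds assumed, in the Camassa-Holm regime $\P_{CH}$. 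The surface tension contribution $\Bo^{-1}$ appears because the capillary term present in~\eqref{eqn:GreenNaghdiMean} is simply dropped in~\eqref{eq:Serre2mr}, and for smooth $\zeta$ the quantity $\frac{\partial_x k(\epsilon\sqrt\mu\zeta)}{\epsilon\sqrt\mu}$ is controlled in $H^s$ uniformly.

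The core of the argument is the reduction of the operators $\overline{\Q}[h_1,h_2]$ and $\overline{\R}[h_1,h_2]$ using the expansions
\[
\overline{\Q}[h_1,h_2]v = -\nu\partial_x^2 v-\epsilon\tfrac{\gamma+\delta}3\big((\beta-\alpha)v\partial_x^2\zeta+(\alpha+2\beta)\partial_x(\zeta\partial_x v)-\beta\zeta\partial_x^2 v\big)+\O(\mu),
\]
\[
\overline{\R}[h_1,h_2]v = \alpha\big(\tfrac12(\partial_x v)^2+\tfrac13 v\partial_x^2 v\big)+\O(\sqrt\mu),
\]
with $\nu,\alpha,\beta$ as in~\eqref{eqn:defnualphabeta}. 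These expansions follow from Taylor-expanding the coefficients $h_1=1-\epsilon\zeta$, $h_2=\delta^{-1}+\epsilon\zeta$ in $\epsilon\zeta$ and using that in $\P_{CH}$ the error terms carry at least one additional factor $\mu$ (since $\epsilon^2\lesssim\mu$, monomials of total weight $\epsilon^a\mu^b$ with $a+2b\geq 4$ are of size $\O(\mu^2)$). I would make these expansions rigorous in $H^s$ via tame product estimates, using the uniform bounds on $\zeta^\p$ and $v^\p$; here the depth condition~\eqref{depthcond} is used to keep the quotients $h_1/(h_1+\gamma h_2)$ and $h_2/(h_1+\gamma h_2)$ smooth. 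Multiplying $\overline{\Q}$ through by $\mu$ gives $\mu\partial_t\overline{\Q}v$, and the task becomes rewriting the leading $\mu$-correction as $-\mu\nu\partial_x(q_2(\epsilon\zeta)\partial_x\partial_t v)$ plus lower order terms, after absorbing the $\epsilon\mu$ correction.

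The key algebraic step is to recognize that the combination $\partial_t v + \mu\partial_t\overline{\Q}[h_1,h_2]v + (\text{quadratic first-order terms})$ can be rearranged as $\mfT[\epsilon\zeta](\partial_t v+\epsilon\varsigma v\partial_x v) + q_1(\epsilon\zeta)\times(\text{pressure and quadratic terms})$, up to $\O(\mu^2)$. This uses two mechanisms: (i) the identity $\kappa_1,\kappa_2,\varsigma$ defined in~\eqref{defkappa},\eqref{defvarsigma} are precisely chosen so that the $\epsilon$-correction to $\overline{\Q}$ matches the $\epsilon$-correction produced by expanding $q_2(\epsilon\zeta)$ inside $\mfT$ and by the Burgers-type term $\epsilon\varsigma v\partial_x v$; (ii) to convert $\mu\epsilon$ nonlinear terms arising from $\overline{\R}$ into the required form, one uses $\partial_t v = -(\gamma+\delta)\partial_x\zeta+\O(\epsilon)$, which is legitimate since each such substitution produces an error of size $\epsilon^2\mu$, and $\epsilon^2\lesssim\mu$ gives $\O(\mu^2)$. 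The remaining cubic and higher nonlinear contributions from $\overline{\R}$, after BBM-style substitutions, collapse to the single term $\mu\epsilon\frac{2}{3}\alpha\partial_x((\partial_x v)^2)$ appearing on the right-hand side of~\eqref{eq:Serre2mr}.

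The main obstacle is precisely this bookkeeping: tracking how the various $\epsilon^a\mu^b$ terms, produced by expanding $\frac{h_1^2-\gamma h_2^2}{(h_1+\gamma h_2)^2}$, by the BBM substitutions $\partial_t v\leftrightarrow -(\gamma+\delta)\partial_x\zeta$, and by the identification of $\mfT$-corrections with $\overline{\Q}$-corrections, combine so that every discrepancy falls into the class $\O(\mu^2)$ in the regime $\P_{CH}$. Each individual replacement generates an error that I would estimate in $H^s$ via the standard tame product inequality and the continuity of the operator $\mfT[\epsilon\zeta]^{-1}$ (which is uniformly bounded on $H^s$ under~\eqref{depthcond}), with all constants depending only on $h_{01}^{-1}$, $M$, the range of $\mu,\delta$, and the uniform Sobolev norms of $(\zeta^\p,\partial_x\psi^\p)$; as in the proof of Proposition~\ref{cons:GNu1u2}, controls on $v$ and its time derivative follow from controls on $(\zeta^\p,\partial_x\psi^\p)$ thanks to~\eqref{eqn:defbarvmr} and the full Euler system, at the cost of a fixed loss of derivatives absorbed into $N$.
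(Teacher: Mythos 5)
Your proposal is correct and follows essentially the same route as the paper, which derives~\eqref{eq:Serre2mr} from the Green--Naghdi system~\eqref{eqn:GreenNaghdiMean} (justified by Proposition~\ref{cons:GNv}) via the Camassa--Holm-regime expansions of $\overline{\Q}$ and $\overline{\R}$, BBM-type substitutions using the equations themselves, and dropping the capillary term (whence the $\Bo^{-1}$ contribution), with the detailed bookkeeping deferred to~\cite{DucheneIsrawiTalhouk}. Your accounting of the $\epsilon^a\mu^b$ errors via $\epsilon\leq M\sqrt{\mu}$ and the $H^s$ tame estimates is exactly the mechanism used there.
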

\medskip

\noindent System~\eqref{eq:Serre2mr} is well-posed (in the sense of Hadamard) in the energy space $X^s\ = \ H^s(\RR)\times H^{s+1}(\RR)$, endowed with the norm
\[
\forall\; U=(\zeta,v)^\top \in X^s, \quad \vert U\vert^2_{X^s}\equiv \vert \zeta\vert^2 _{H^s}+\vert v\vert^2 _{H^s}+ \mu\vert \partial_xv\vert^2 _{H^s},
\]
provided that the following ellipticity condition (for the operator $\mfT$) holds:
\begin{equation}\label{CondEllipticity}\tag{H2}
\exists h_{02}>0 \text{ such that }\quad \inf_{x\in \RR} \left(1+\epsilon\kappa_2\zeta\right) \ge \ h_{02} \ > \ 0 \ ; \qquad    \inf_{x\in \RR}  \left( 1+\epsilon \kappa_1\zeta  \right)\ge \ h_{02} \ > \ 0.
\end{equation}
\begin{Theorem}[Existence and uniqueness]\label{thbi1mr}
	Let   $s_0>1/2$, $s\geq s_0+1$, and let
	 $U_0=(\zeta_0,v_0)^\top\in X^s$ satisfy~\eqref{depthcond},\eqref{CondEllipticity}. Then there exists
         a maximal time $T_{\max}>0$, uniformly bounded from below with respect to $\p\in \P_{CH} $, such that the system of
         equations~\eqref{eq:Serre2mr} admits
	 a unique solution $U=(\zeta,v)^\top \in C^0([0,T_{\max}/\epsilon);X^s)\cap C^1([0,T_{\max}/\epsilon);X^{s-1})$ with the initial value $(\zeta,v)\id{t=0}=(\zeta_0,v_0)$
         and preserving the conditions ~\eqref{depthcond},\eqref{CondEllipticity} (with different lower bounds) for any $t\in [0,T_{\max}/\epsilon)$.
         
   Moreover, for any $0\leq T<T_{\max}$, there exists $C_0,\lambda_T=  C(h_{01}^{-1},h_{02}^{-1},\delta_{\max},\delta_{\min}^{-1},M,T,\big\vert U_0\big\vert_{X^{s}})$, independent of $\p\in\P_{CH}$, such that one has the energy estimate
\[\forall\ 0\leq t\leq\frac{T}{\epsilon}\ , \qquad 
\big\vert U(t,\cdot)\big\vert_{X^{s}} \ + \ 
\big\vert \partial_t U(t,\cdot)\big\vert_{X^{s-1}}  \leq C_0 e^{\epsilon\lambda_{T} t}\ .
\]
If $T_{\max}<\infty$, one has
         \[ \vert U(t,\cdot)\vert_{X^{s}}\longrightarrow\infty\quad\hbox{as}\quad t\longrightarrow \frac{T_{\max}}{\epsilon},\]
         or one of the two conditions~\eqref{depthcond},\eqref{CondEllipticity} ceases to be true $\hbox{as}\quad t\longrightarrow T_{\max}/{\epsilon}$.
\end{Theorem}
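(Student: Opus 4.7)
The plan is the quasilinear energy method, adapted to the non-local structure induced by $\mfT[\epsilon\zeta]$ via its invertibility under~\eqref{CondEllipticity}. First I would apply $\mfT[\epsilon\zeta]^{-1}$ to the second equation of~\eqref{eq:Serre2mr} to write it in the semi-resolved form
\[\partial_t v + \epsilon\varsigma v\partial_x v + \mfT[\epsilon\zeta]^{-1}\bigl((\gamma+\delta)q_1(\epsilon\zeta)\partial_x\zeta + \mathcal{F}[\epsilon\zeta,v]\bigr) = 0,\]
where $\mathcal{F}[\epsilon\zeta,v]$ gathers the remaining nonlinear terms, each carrying either an $\epsilon$ or a $\mu\epsilon$ prefactor. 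Under~\eqref{CondEllipticity}, $\mfT[\epsilon\zeta]$ is a uniformly (with respect to $\p\in\P_{CH}$) elliptic second order operator from $H^{s+1}(\RR)$ onto $H^{s-1}(\RR)$, satisfying the smoothing bounds $|\mfT[\epsilon\zeta]^{-1}f|_{H^s}+\sqrt\mu|\partial_x\mfT[\epsilon\zeta]^{-1}f|_{H^s}\lesssim|f|_{H^{s-1}}$ and $\sqrt\mu|\mfT[\epsilon\zeta]^{-1}\partial_x g|_{H^s}\lesssim |g|_{H^s}$, uniformly in $\mu\in(0,\mu_{\max}]$.

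Next I would introduce the modified energy functional
\[E^s(U) = \sum_{k=0}^s (\gamma+\delta)\bigl(q_1(\epsilon\zeta)\Lambda^k\zeta,\Lambda^k\zeta\bigr) + \sum_{k=0}^s\bigl(\mfT[\epsilon\zeta]\Lambda^k v,\Lambda^k v\bigr),\]
which, under~\eqref{depthcond},\eqref{CondEllipticity}, is equivalent to $|U|^2_{X^s}$ uniformly in $\p\in\P_{CH}$. The central step is the a priori estimate $\frac{d}{dt}E^s(U)\leq \epsilon\lambda_T E^s(U)$. Applying $\Lambda^s$ to both equations of~\eqref{eq:Serre2mr} and testing appropriately, the principal transport part is symmetrized: the cross-terms pairing $\Lambda^s\partial_x\zeta$ with $\Lambda^s v$ cancel at leading order thanks to the common weight $q_1(\epsilon\zeta)$ appearing on both equations, while time-differentiating $\mfT[\epsilon\zeta]$ produces only an $\epsilon \partial_t\zeta$ contribution, itself controlled via the first equation. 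Commutators $[\Lambda^s,q_1(\epsilon\zeta)]$, $[\Lambda^s,v\partial_x]$, $[\Lambda^s,\mfT[\epsilon\zeta]]$, as well as the smooth nonlinearities involving $\frac{h_1h_2}{h_1+\gamma h_2}$ and $\frac{h_1^2-\gamma h_2^2}{(h_1+\gamma h_2)^2}$, are handled by Kato-Ponce and Moser tame product estimates, each producing an $\epsilon$-small contribution.

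The hardest point is the BBM-type term $\mu\epsilon\,\mfT[\epsilon\zeta]^{-1}\partial_x((\partial_x v)^2)$. After $\Lambda^s$, the smoothing estimate above reduces its $L^2$ control to bounding $\sqrt\mu\,\epsilon\,|(\partial_x v)^2|_{H^s}$. The tame product estimate $|(\partial_x v)^2|_{H^s}\lesssim |\partial_x v|_{H^{s_0}}|\partial_x v|_{H^s}$, combined with $\sqrt\mu|\partial_x v|_{H^s}\lesssim |U|_{X^s}$ and the Camassa-Holm restriction $\epsilon\leq M\sqrt\mu$ in $\P_{CH}$, produces exactly an $\epsilon$ factor in front of $E^s(U)$; this is precisely what fixes the $\epsilon^{-1}$ timescale in the statement. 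All the remaining terms in $\mathcal{F}[\epsilon\zeta,v]$ fit into the same pattern and are $\epsilon$-absorbed.

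With the a priori estimate in hand I would construct solutions by a standard scheme: mollify the nonlinearities (Friedrichs' scheme) or set up a Picard iteration on the linearized system, for which the same energy argument provides uniform $X^s$ bounds; Aubin-Lions compactness then extracts a limit solving~\eqref{eq:Serre2mr}. Uniqueness and Lipschitz continuous dependence in $X^{s-1}$ follow from the analogous energy estimate on the difference of two solutions, and $X^s$-continuity in time is recovered by the Bona-Smith approximation argument. The bound on $|\partial_t U|_{X^{s-1}}$ is read off~\eqref{eq:Serre2mr} directly, and the blow-up dichotomy is a direct consequence of Grönwall applied to the energy estimate: the solution can be continued past any time $t<T_{\max}/\epsilon$ at which $|U(t,\cdot)|_{X^s}$ stays bounded and the conditions~\eqref{depthcond},\eqref{CondEllipticity} remain satisfied with strictly positive lower bounds.
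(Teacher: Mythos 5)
You should first be aware that this survey does not prove Theorem~\ref{thbi1mr} at all: the result is imported verbatim from~\cite{DucheneIsrawiTalhouk}, where the proof is indeed of the type you outline — uniform ellipticity and invertibility of $\mfT[\epsilon\zeta]$ under~\eqref{CondEllipticity}, an $\mfT$-weighted (symmetrizer) energy equivalent to $\vert\cdot\vert_{X^s}$, tame product and commutator estimates producing $\epsilon$-small contributions, an iterative scheme plus a Bona--Smith argument, and a continuation criterion. So your general route is the right one. There is, however, a genuine flaw in the step you present as the heart of the argument, namely the choice of symmetrizer and the claimed cancellation of the top-order cross terms.

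With your energy, the coupling terms of order $s+1$ are, up to $\epsilon$-controlled remainders,
\begin{equation*}
-(\gamma+\delta)\Bigl(q_1(\epsilon\zeta)\,\tfrac{h_1h_2}{h_1+\gamma h_2}\,\Lambda^s\zeta\,,\,\partial_x\Lambda^s v\Bigr)\;-\;(\gamma+\delta)\Bigl(q_1(\epsilon\zeta)\,\partial_x\Lambda^s\zeta\,,\,\Lambda^s v\Bigr),
\end{equation*}
and after integrating the second pairing by parts their sum is $(\gamma+\delta)\bigl(q_1(1-\tfrac{h_1h_2}{h_1+\gamma h_2})\Lambda^s\zeta,\partial_x\Lambda^s v\bigr)$ modulo $\epsilon$-controlled terms. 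Since $\tfrac{h_1h_2}{h_1+\gamma h_2}=\tfrac{1}{\gamma+\delta}+\O(\epsilon)$, the factor $1-\tfrac{h_1h_2}{h_1+\gamma h_2}$ is an $\O(1)$ nonzero quantity whenever $\gamma+\delta\neq1$: the cross terms do \emph{not} cancel, contrary to your claim that the ``common weight $q_1$'' does the job. The surviving term carries $s+1$ derivatives of $v$, hence can only be bounded by $\mu^{-1/2}E^s$, and your scheme then yields at best $\frac{d}{dt}E^s\lesssim\mu^{-1/2}E^s$ — no uniform-in-$\p$ existence time at all, let alone the $T_{\max}/\epsilon$ scale. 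The weight on the $\zeta$-component must be chosen so that the top-order cancellation is \emph{exact}, e.g. $(\gamma+\delta)\,q_1(\epsilon\zeta)\,\tfrac{h_1+\gamma h_2}{h_1h_2}=(\gamma+\delta)^2q_1+\O(\epsilon)$, which is essentially what the symmetrizer of~\cite{DucheneIsrawiTalhouk} amounts to; note that even an $\O(\epsilon)$ mismatch is not good enough for the statement, since the resulting term costs $\epsilon\mu^{-1/2}\leq M$ and only gives an $\O(1)$ time of existence — this, rather than the BBM-type term (which is $\epsilon$-absorbed without invoking $\epsilon\leq M\sqrt\mu$), is the delicate point behind the large-time claim. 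A minor additional slip: the bound $\vert\mfT[\epsilon\zeta]^{-1}f\vert_{H^s}\lesssim\vert f\vert_{H^{s-1}}$ cannot hold uniformly in $\mu\in(0,\mu_{\max}]$ (as $\mu\to0$, $\mfT^{-1}\to q_1^{-1}$, a multiplication operator); the uniform estimates are $\vert\mfT^{-1}f\vert_{H^s}+\sqrt\mu\vert\partial_x\mfT^{-1}f\vert_{H^s}\lesssim\vert f\vert_{H^s}$ together with the $\mfT^{-1}\partial_x$ bound you state, and the latter is what your BBM estimate actually uses, so this does not break anything.
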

\begin{Proposition}[Stability] \label{th:stabilityWPmr}
Let $s\geq s_0+1$ with  $s_0>1/2$, and let
	 $U_{0,1}=(\zeta_{0,1},v_{0,1})^\top\in X^{s}$ and $U_{0,2}=(\zeta_{0,2},v_{0,2})^\top\in X^{s+1}$. Under the assumptions of Theorem~\ref{thbi1mr}, let $U_j$ be the solution of system~\eqref{eq:Serre2mr} with $U_j\id{t=0}=U_{0,j}$.Then there exists $T,\lambda,C_0= C(h_{01}^{-1},h_{02}^{-1},\delta_{\max},\delta_{\min}^{-1},M,\big\vert U_{0,1}\big\vert_{X^s},\vert U_{0,2}\vert_{X^{s+1}})>0$ such that
	  \begin{equation*}
	 	\forall t\in [0,\frac{T}{\epsilon}],\qquad
	 	\big\vert (U_1-U_2)(t,\cdot)\big\vert_{X^s} \leq C_0 e^{\epsilon\lambda_{T} t} \big\vert U_{1,0}-U_{2,0}\big\vert_{X^s}.
	 \end{equation*}
\end{Proposition}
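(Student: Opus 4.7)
The plan is to derive an evolution equation for the difference $W\equiv U_1-U_2 = (\eta,w)^\top$ by subtracting the two copies of system~\eqref{eq:Serre2mr}, and then to mimic the $X^s$ energy estimate used in the proof of Theorem~\ref{thbi1mr}. Subtracting the mass-conservation lines yields $\partial_t\eta + \partial_x\bigl(\tfrac{h_1(\zeta_1)h_2(\zeta_1)}{h_1(\zeta_1)+\gamma h_2(\zeta_1)}w\bigr) = \mathcal{R}_0$, while the momentum lines give
\[ \mfT[\epsilon\zeta_1]\bigl(\partial_t w + \epsilon\varsigma v_1\partial_x w\bigr) \ + \ (\gamma+\delta) q_1(\epsilon\zeta_1)\partial_x\eta \ + \ \mathcal{L}\,W \ = \ \mathcal{R}_1, \]
where $\mathcal{L}$ is a zeroth-order linear operator with smooth coefficients depending on $U_1$, and $\mathcal{R}_0,\mathcal{R}_1$ collect the terms in which a smooth coefficient $a(\epsilon\zeta_j,v_j)$ is replaced by the difference $a(\epsilon\zeta_1,v_1)-a(\epsilon\zeta_2,v_2)$, multiplied by $\partial_x^\alpha U_2$, $|\alpha|\leq 1$. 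The principal part of this system is precisely the \emph{linearization at $U_1$} of the quasilinear operator already symmetrized in the proof of Theorem~\ref{thbi1mr}.

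I would then apply to the above system the symmetrizer and $X^s$-energy estimate constructed in~\cite{DucheneIsrawiTalhouk}. Under conditions~\eqref{depthcond},\eqref{CondEllipticity} (which, by Theorem~\ref{thbi1mr}, are preserved along both flows up to time $T_{\max}/\epsilon$), this produces an energy $E^s(W)\sim\vert W\vert_{X^s}^2$ satisfying a differential inequality
\[ \frac{d}{dt}E^s(W) \ \leq \ \epsilon\lambda_1\, E^s(W) \ + \ \bigl(\vert\mathcal{R}_0\vert_{H^s} + \vert\mathcal{R}_1\vert_{H^s}\bigr)\,\vert W\vert_{X^s}, \]
with $\lambda_1$ depending only on $\vert U_1\vert_{X^s}$ and the parameters listed in the statement; the homogeneous part follows exactly as in the proof of Theorem~\ref{thbi1mr}, by commutator and tame product estimates in Sobolev spaces.

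The only new ingredient is the control of the remainders. Using the Moser-type bound $\vert a(U_1)-a(U_2)\vert_{H^s}\leq C(\vert U_1\vert_{H^s},\vert U_2\vert_{H^s})\,\vert W\vert_{X^s}$ for smooth $a$, together with tame product inequalities, one obtains
\[ \vert \mathcal{R}_j\vert_{H^s} \ \leq \ C_0\,\vert W\vert_{X^s}\,\vert U_2\vert_{X^{s+1}}, \qquad j=0,1, \]
where $C_0$ depends only on the quantities of the statement. This is where the extra smoothness $U_{0,2}\in X^{s+1}$ is essential: the differences of nonlinear coefficients come multiplied by one spatial derivative of $U_2$ (terms such as $\partial_x v_2$, $\partial_x\zeta_2$ in the transport and pressure-like contributions), which must be absorbed into one more Sobolev degree. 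Applying Theorem~\ref{thbi1mr} with data $U_{0,2}\in X^{s+1}$ furnishes a uniform bound on $\vert U_2(t,\cdot)\vert_{X^{s+1}}$ on a common time interval $[0,T/\epsilon]$; Gronwall's lemma on the above differential inequality then yields the claimed exponential bound.

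The main obstacle is the familiar loss of one derivative in the linearization of quasilinear systems: the difference of coefficients cannot be controlled at principal order without sacrificing a derivative on one of the factors, so the only way to close the energy estimate in $X^s$ is to spend one additional unit of regularity on $U_2$. The bulk of the technical work then consists in splitting the remainder terms carefully, checking that every differentiated factor of $U_2$ is compensated by the $X^{s+1}$ control, and verifying that all constants depend only on the quantities declared in the statement.
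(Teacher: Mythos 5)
Your approach is essentially the one behind this result: the paper itself gives no proof of Proposition~\ref{th:stabilityWPmr}, deferring to~\cite{DucheneIsrawiTalhouk}, where the stability estimate is obtained exactly as you describe --- an $X^s$ energy estimate based on the symmetrizer built from $\mfT[\epsilon\zeta_1]$ under~\eqref{depthcond},\eqref{CondEllipticity}, applied to the system satisfied by the difference, with the loss of one derivative absorbed by the $X^{s+1}$ control of $U_2$ and concluded by Gronwall. One point to keep explicit when writing it up: every coefficient difference enters through $\epsilon\zeta_j$, $\epsilon v_j$ (and the dispersive remainders through $\mu\epsilon$, the terms involving $\partial_x\partial_t v_2$ being handled by an integration by parts against $\partial_x\Lambda^s w$ so that only $\sqrt\mu\,\vert\partial_x\partial_t v_2\vert_{H^s}$ is needed), so the remainder bounds carry a factor $\epsilon$; without tracking this factor, Gronwall would only give $e^{\lambda t}$ instead of the stated $C_0e^{\epsilon\lambda_T t}$, which is precisely what makes the estimate uniform over the relevant time scale $O(1/\epsilon)$ for $\p\in\P_{CH}$.
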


Finally, the following ``convergence result'' states that the solutions of our system approach the solutions of the full Euler system, with as good a precision as $\mu$ (and $\Bo^{-1}$) is small.

\begin{Theorem}[Convergence] \label{prop:stability}
Let $\p\in \P_{CH}$ and $U^0\equiv(\zeta^0,\psi^0)^\top\in H^{s+N}$, $N$ sufficiently large, satisfy the hypotheses of Theorem~5 in~\cite{Lannes13} (see Theorem~\ref{th:WPEuler}), as well as~\eqref{depthcond},\eqref{CondEllipticity}. Then there exists $C,T>0$, independent of $\p$, such that
\begin{itemize}
\item There exists a unique solution $U\equiv (\zeta,\psi)^\top$ to the full Euler system~\eqref{eqn:EulerCompletAdim}, defined on $[0,T]$ and with initial data $(\zeta^0,\psi^0)^\top$ (provided by Theorem~5 in~\cite{Lannes13});
\item There exists a unique solution $U_a\equiv (\zeta_a,v_a)^\top$ to our new model~\eqref{eq:Serre2mr}, defined on $[0,T]$ and with initial data $(\zeta^0,v^0)^\top$, with $v^0\equiv  v[\zeta^0,\psi^0]$ defined as in~\eqref{eqn:defbarvmr} (provided by Theorem~\ref{thbi1mr});
\item With $ v\equiv v[\zeta,\psi]$, defined as in~\eqref{eqn:defbarvmr},one has
\[ \big\vert (\zeta, v)-(\zeta_a,v_a) \big\vert_{L^\infty([0,T];X^s)}\leq C(\mu^2+\Bo^{-1}) t.\]
\end{itemize}
The above results hold on time interval $t\in[0,T/\epsilon]$ with $T$ bounded by below, independently of $\p\in\P_{CH}$, provided that a stronger criterion is satisfied by the initial data. This corresponds to setting $\varrho=1$ in Theorem~\ref{th:WPEuler}; see criterion (5.5) and Theorem~6 in~\cite{Lannes13} for the precise statement.
\end{Theorem}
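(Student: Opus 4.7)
The plan is to combine the three ingredients that are now available: the well-posedness of the full Euler system (Theorem~\ref{th:WPEuler}), the well-posedness and stability of the Camassa-Holm-type model~\eqref{eq:Serre2mr} (Theorem~\ref{thbi1mr} and Proposition~\ref{th:stabilityWPmr}), and the consistency of the full Euler system with~\eqref{eq:Serre2mr} (the Consistency Proposition of Section~\ref{sec:Serre}). Each of these has been stated with constants and existence times uniform with respect to $\p\in\P_{CH}$; the whole point of the proof is to glue them together in a way that preserves this uniformity.

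First, I would apply Theorem~\ref{th:WPEuler} to the initial datum $U^0=(\zeta^0,\psi^0)^\top$ to produce a unique solution $U=(\zeta,\psi)^\top$ of~\eqref{eqn:EulerCompletAdim} on a time interval $[0,T^\star]$, with $T^\star>0$ independent of $\p\in\P_{CH}\subset\P_{SW}$, and controlled in the high-regularity norm needed to feed the Consistency Proposition. From $U$ one builds $v=v[\zeta,\psi]$ via~\eqref{eqn:defbarvmr}; in particular, $v^0=v[\zeta^0,\psi^0]$ inherits enough Sobolev regularity from $\psi^0$. Next, I would check that $U_a^0\equiv(\zeta^0,v^0)^\top$ satisfies the hypotheses of Theorem~\ref{thbi1mr}: the non-vanishing depth condition~\eqref{depthcond} is exactly the one imposed in the statement, and the ellipticity condition~\eqref{CondEllipticity} is likewise assumed. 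Theorem~\ref{thbi1mr} then produces a unique solution $U_a=(\zeta_a,v_a)^\top$ of~\eqref{eq:Serre2mr} on $[0,T_a/\epsilon]$, with $T_a>0$ uniformly bounded below with respect to $\p\in\P_{CH}$, and with a controlled $X^s$-norm on that interval. After shrinking if necessary, I set $T=\min(T^\star,T_a)$, which remains uniformly bounded below.

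The heart of the argument is now a stability-with-source estimate. By the Consistency Proposition, the pair $(\zeta,v)$ extracted from the full Euler solution solves~\eqref{eq:Serre2mr} up to a source term $(0,R)^\top$ satisfying $\|R\|_{L^\infty([0,T];H^s)}\leq C(\mu^2+\Bo^{-1})$, while $(\zeta_a,v_a)$ solves~\eqref{eq:Serre2mr} exactly, with the \emph{same} initial datum. Writing $W=(\zeta,v)^\top-(\zeta_a,v_a)^\top$ and subtracting the two systems, $W$ satisfies a quasilinear equation of the same type as the one used to prove Proposition~\ref{th:stabilityWPmr}, but with the extra source $(0,R)^\top$ on the right-hand side. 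I would reproduce the energy estimate leading to Proposition~\ref{th:stabilityWPmr} (using $\mfT[\epsilon\zeta_a]$ as a symmetrizer and exploiting the structural cancellations that make the model symmetrizable-like), but now carry the forcing term: the Cauchy-Schwarz/Young inequality applied to the inner product of $R$ with the energy functional produces an additional contribution $\int_0^t |R|_{X^s}\,d\tau$. Gronwall's lemma then yields
\[
|W(t,\cdot)|_{X^s}\ \leq\ C_0\,e^{\epsilon\lambda_T t}\,\Big(|W(0,\cdot)|_{X^s}\,+\,\int_0^t |R(\tau,\cdot)|_{H^s}\,d\tau\Big).
\]
Since $W(0,\cdot)=0$ by construction and $|R|_{H^s}\lesssim \mu^2+\Bo^{-1}$ uniformly on $[0,T]$, this gives exactly the bound $|W|_{L^\infty([0,T];X^s)}\leq C(\mu^2+\Bo^{-1})\,t$ announced in the theorem.

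The main obstacle, to my mind, is not any single one of these steps but the careful bookkeeping that ensures each remains uniform in $\p\in\P_{CH}$ on a common time interval: one must check that the high regularity needed by the Consistency Proposition (which asks for $H^{s+N}$-control of $(\zeta,\nabla\psi)$) is indeed furnished by Theorem~\ref{th:WPEuler} ``without loss of derivatives'', and that the depth and ellipticity conditions~\eqref{depthcond},~\eqref{CondEllipticity} are propagated in time by both flows (which is built into the statements of Theorems~\ref{th:WPEuler} and~\ref{thbi1mr} up to adjusting $h_0$). The final assertion on the longer time interval $[0,T/\epsilon]$ follows by repeating the argument with the stronger initial-data criterion $\varrho=1$ in Theorem~\ref{th:WPEuler}, which upgrades the Euler existence time to the $\epsilon^{-1}$ scale, the scale on which Theorem~\ref{thbi1mr} already lives.
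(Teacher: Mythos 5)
Your proposal is correct and follows essentially the same route as the paper, which (via the Remark after the theorem and the reference to Proposition 7.1 and Theorem 7.5 of \cite{DucheneIsrawiTalhouk}) justifies the model by combining Theorem~\ref{th:WPEuler}, the consistency of the full Euler system with~\eqref{eq:Serre2mr}, Theorem~\ref{thbi1mr}, and an a priori stability estimate with a residual source term plus Gronwall, all uniform in $\p\in\P_{CH}$. The only point worth flagging is the mild asymmetry in Proposition~\ref{th:stabilityWPmr} (one of the two solutions must lie in $X^{s+1}$), which is harmless here since the Euler-derived pair $(\zeta,v)$ carries the extra regularity.
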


\begin{Remark}
The new model allows to fully justify any well-posed system, consistent with our model~\eqref{eq:Serre2mr}, thanks to an a priori estimate between two approximate solutions of our system, established in~\cite{DucheneIsrawiTalhouk} (see Proposition 7.1 and Theorem 7.5 therein). This is used in particular to obtain the convergence results of the unidirectional and decoupled approximations stated in Section~\ref{sec:scalar}.
\end{Remark}

\begin{Remark}[The model with surface tension]
The previous results concerning~\eqref{eq:Serre2mr}, where the surface tension effects are neglected, still hold when surface tension is taken into account; see~\eqref{eqn:EulerCompletAdim}. These results will be precisely stated in a forthcoming paper.
\end{Remark}

\subsection{Boussinesq models}\label{sec:Boussinesq}
In this section, we restrict ourselves to the case of flat bottom ($\beta=0$), and unidimensional case ($d=1$). Moreover, we restrict the regime under study to the so-called long wave regime, where $\epsilon=\O(\mu)$; see Definition~\ref{Def:Regimes}. We also assume that the surface tension term is at most of size $\Bo^{-1}=\O(\mu)$, with the following:
\[ \Bo^{-1} \ = \ \mu \bo^{-1} \quad \text{ with } \quad \bo\equiv \dsp\frac{g(\rho_2-\rho_1)d_1^2}{\sigma} \in [\bo_{\min},\infty).\]
 In that case, when withdrawing $\O(\mu^2)$ terms, one can easily check (see the proof of Proposition~\ref{cons:Bouss}, below for detailed calculations) that the system~\eqref{eqn:GreenNaghdiMean} becomes a simple quasilinear system, with additional linear dispersive terms:
\begin{equation}\label{eqn:BoussinesqMean}
\displaystyle\partial_{ t}U -\mu A_1\partial_x^2\partial_t U \ + \ A_0\partial_x U \ + \ \epsilon A[U]\partial_xU \ + \ \mu A_2\partial_x^3 U \ =\ 0, 
\end{equation}
with
\[ A_0 = \begin{pmatrix}
0&\frac{1}{\gamma+\delta}\\ \gamma+\delta&0 \end{pmatrix}, \quad 
A\begin{bmatrix}\zeta\\ v \end{bmatrix} = \frac{\delta^2-\gamma}{(\gamma+\delta)^2}\begin{pmatrix}
v&\zeta \\ 0&v \end{pmatrix}, \quad
A_1 = \begin{pmatrix}
0&0\\ 0&\frac{1+\gamma\delta}{3\delta(\gamma+\delta)} \end{pmatrix}, \quad
 A_2 =  \begin{pmatrix}
0&0\\ -\frac{\gamma+\delta}{\bo}&0
\end{pmatrix}. \]
The full Euler system is consistent with this model, with the same precision as the Green-Naghdi models, provided that the assumptions of the long wave regime, and in particular $\epsilon\leq M\mu$, hold (in addition to the aforementioned $\beta=0,d=1$); see Proposition~\ref{cons:Bouss}, below.

\paragraph{Boussinesq systems with improved frequency dispersion.} Let us emphasize that system~\eqref{eqn:BoussinesqMean} is only one of a large family of Boussinesq-type models, that are consistent with precision $\O(\mu^2)$.
Briefly, one can make use of
\begin{itemize}
\item {\em Near-identity change of variable.} Define, for $\theta_1,\theta_2\geq 0$, the following
\[ v_{\theta_1,\theta_2} \ = \  (I - \mu \theta_1 \partial_x^2)^{-1}(I - \mu\theta_2 \partial_x^2)  v.\]
When rewriting~\eqref{eqn:BoussinesqMean} with respect to this new variable, $v_{\theta_1,\theta_2}$, and withdrawing $\O(\mu^2)$ terms, one obtains
\begin{equation}\label{eqn:Boussinesqparam}
\partial_{ t}U_{\theta_1,\theta_2} -\mu \t A_1\partial_x^2\partial_t U_{\theta_1,\theta_2} \ + \ A_0\partial_x U_{\theta_1,\theta_2} \ + \ \epsilon A[U_{\theta_1,\theta_2}]\partial_x U_{\theta_1,\theta_2} \ + \ \mu\t A_2\partial_x^3 U_{\theta_1,\theta_2}  \ =\ 0, 
\end{equation}
with $U_{\theta_1,\theta_2}\equiv (\zeta,v_{\theta_1,\theta_2})^\top$ and 
\[\t A_1 \ = \ \begin{pmatrix}
\theta_2&0\\ 0&\frac{1+\gamma\delta}{3\delta(\gamma+\delta)}+\theta_1 \end{pmatrix}, \quad
\t A_2  =   \begin{pmatrix}
0&\frac{-\theta_1}{\gamma+\delta}\\ -\frac{\gamma+\delta}{\bo}-\theta_2(\gamma+\delta)&0
\end{pmatrix}. \]
\item {\em The Benjamin-Bona-Mahony trick.} Using that $\partial_t U_{\theta_1,\theta_2} \ + \ A_0\partial_x U_{\theta_1,\theta_2} \ = \ \O(\mu)$, one has for any $\lambda_1,\lambda_2\in\RR$:
\[ \partial_t U_{\theta_1,\theta_2} \ = \  \begin{pmatrix}
1-\lambda_1&0\\ 0&1-\lambda_2 \end{pmatrix} \partial_t U_{\theta_1,\theta_2} - \begin{pmatrix}
\lambda_1&0\\ 0&\lambda_2 \end{pmatrix} A_0\partial_x U_{\theta_1,\theta_2} +\O(\mu^2). 
\]
Plugging this approximation into~\eqref{eqn:Boussinesqparam} yields, withdrawing again $\O(\mu^2)$ terms,
\begin{equation}\label{eqn:Boussinesqparam2}
\partial_{ t}U_{\theta_1,\theta_2} -\mu \breve A_1\partial_x^2\partial_t U_{\theta_1,\theta_2} \ + \ A_0\partial_x U_{\theta_1,\theta_2} \ + \ \epsilon A[U_{\theta_1,\theta_2}]\partial_x U_{\theta_1,\theta_2} \ + \ \mu\breve A_2\partial_x^3 U_{\theta_1,\theta_2}  \ =\ 0, 
\end{equation}
with
\[\breve A_1 \ = \ \begin{pmatrix}
(1-\lambda_1)\theta_2&0\\ 0&(1-\lambda_2)\big(\frac{1+\gamma\delta}{3\delta(\gamma+\delta)}+\theta_1\big) \end{pmatrix}, \]
\[
\breve A_2  =   \begin{pmatrix}
0&\frac{\lambda_1\theta_2-\theta_1}{\gamma+\delta}\\ -\frac{\gamma+\delta}{\bo}+(\gamma+\delta)\left[\lambda_2\big(\frac{1+\gamma\delta}{3\delta(\gamma+\delta)}+\theta_1\big)-\theta_2\right]&0
\end{pmatrix}. \]
\end{itemize}

\noindent The above transformations are useful, for example, with the aim of {\em improving the frequency dispersion}, that is choosing the coefficients so that the dispersion relation fits the one of the full Euler system with high precision, even for relatively large $\mu$ (using truncated Taylor series or Pad\'e approximant). It may also be useful for mathematical purposes to generate such a large family of models, which are all equivalent in the sense of consistency, but may have very different properties (well-posedness, integrability, {\em etc.}). We let the reader refer to~\cite{Lannes} for a more detailed account.

In~\cite{SautXu12}, Saut and Xu offer an in-depth study of the well-posedness of such Boussinesq systems in the water-wave setting (thus with different coefficients). It would be interesting, but out of the scope of the present work, to adapt the techniques developed therein to our bi-fluidic systems~\eqref{eqn:Boussinesqparam2}.

\paragraph{A fully justified symmetric Boussinesq model.} Another strategy for constructing a model with improved properties, that has been used in~\cite{BonaColinLannes05,Duchene11a} and that we develop in the following, consists in symmetrizing the original model~\eqref{eqn:BoussinesqMean}, up to precision $\O(\mu^2)$.
Define
\[ S_0 \ \equiv \ \begin{pmatrix}
\gamma+\delta&0\\ 0&\frac{1}{\gamma+\delta}\end{pmatrix},\quad  S\begin{bmatrix}\zeta\\ v \end{bmatrix}\equiv  \frac{\delta^2-\gamma}{(\gamma+\delta)^2}\begin{pmatrix}
0&0 \\ 0&\zeta
\end{pmatrix}, \quad T \ \equiv \ \begin{pmatrix}
(\gamma+\delta)(1+\frac1\bo)&0\\ 0&\frac1{\gamma+\delta}
\end{pmatrix} \ .
\]
Multiplying~\eqref{eqn:BoussinesqMean} with $(S_0+\epsilon S[U]-\mu T\partial_x^2)$ and withdrawing $\O(\mu^2)$ terms yields
\begin{equation}\label{eqn:BoussinesqSym}
(S_0+\epsilon S[U]-\mu S_1\partial_x^2 ) \partial_{ t}U  \ + \ (\Sigma_0+\epsilon \Sigma[U]-\mu \Sigma_1\partial_x^2)\partial_x U  \ =\ 0, 
\end{equation}
with the following {\em symmetric} matrices and operators:
\[ \Sigma_0\equiv S_0A_0=\begin{pmatrix}
0&1\\1&0
\end{pmatrix},\quad \Sigma\begin{bmatrix}\zeta\\ v \end{bmatrix}\equiv S_0 A\begin{bmatrix}\zeta\\ v \end{bmatrix}+S\begin{bmatrix}\zeta\\ v \end{bmatrix}A_0 \ = \  \frac{\delta^2-\gamma}{\gamma+\delta}\begin{pmatrix}
v&\zeta \\ \zeta&\frac{v}{(\gamma+\delta)^2}
\end{pmatrix}, \]
\[ S_1  \equiv S_0 A_1+T =  \begin{pmatrix}
(\gamma+\delta)(1+\frac1\bo) &0\\ 0&\frac{1}{\gamma+\delta}+ \frac{1+\gamma\delta}{3\delta(\gamma+\delta)^2} 
\end{pmatrix}, \quad \Sigma_1  \equiv  T A_0 -S_0 A_2  =   \begin{pmatrix}
0&1+\frac1\bo\\ 1+\frac1\bo&0
\end{pmatrix}.
\]

This new model is fully justified, in the sense described in the introduction. Let us detail the consistency, well-posedness and convergence results below.

\begin{Proposition}[Consistency] \label{cons:Bouss}
Let $U^\p\equiv(\zeta^\p,\psi^\p)^\top$ be a family of solutions to the full Euler system~\eqref{eqn:EulerCompletAdim} with $\beta=0$ and $d=1$, such that there exists $T>0$, $s\geq0$ for which $(\zeta^\p,\partial_x \psi^\p)^\top$ is bounded in $L^\infty([0,T);H^{s+N})^{2}$ ($N$ sufficiently large), uniformly with respect to $(\mu,\epsilon,\delta,\gamma)\equiv \p\in \P_{LW}$; see Definition~\ref{Def:Regimes}, and $\bo^{-1}\leq \bo_{\min}^{-1}$, $\bo_{\min}>0$. Moreover, assume that
\[
\exists h_{0}>0 \text{ such that }\quad h_1\equiv  1-\epsilon \zeta^\p \geq\ h_{0}\ >\ 0, \quad  h_2\equiv \frac1\delta + \epsilon  \zeta^\p \geq\ h_{0}\ >\ 0.
\]
Define $v^\p\equiv \b u_2-\gamma \b u_1$; see Definition~\ref{defU1U2}. Then $(\zeta^\p,  v^\p)^\top$  satisfy~\eqref{eqn:BoussinesqMean} (resp.~\eqref{eqn:BoussinesqSym}) up to a remainder term, $R_B$ (resp. $R_S$), bounded by
\[ \big\Vert R_B \big\Vert_{L^\infty([0,T);H^s)^2}  \ + \ \big\Vert R_S \big\Vert_{L^\infty([0,T);H^s)^2} \ \leq \ \mu^2\ C \ ,\]
with $C=C(h_0^{-1},\bo_{\min}^{-1},\mu_{\max},M,\delta_{\min}^{-1},\delta_{\max},\big\Vert \zeta^\p\big\Vert_{L^\infty([0,T);H^{s+N})} ,\big\Vert\partial_x\psi^\p\big\Vert_{L^\infty([0,T);H^{s+N})})$, uniform with respect to $(\mu,\epsilon,\delta,\gamma)\in \P_{LW}$.
\end{Proposition}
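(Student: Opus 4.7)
The plan is to deduce both claims from the already-established consistency of the one-dimensional flat-bottom Green--Naghdi model~\eqref{eqn:GreenNaghdiMean}, which follows from Proposition~\ref{cons:GNv} (combined with Remark~\ref{remd1}, which identifies the shear mean velocity with $\b u_2-\gamma\b u_1$ when $d=1$). Under this reduction, $(\zeta^\p,v^\p)$ satisfies~\eqref{eqn:GreenNaghdiMean} with a remainder already bounded by $\mu^2\, C$ in $L^\infty([0,T);H^s)$, so it remains to estimate the $H^s$-difference between the Green--Naghdi right-hand side and that of~\eqref{eqn:BoussinesqMean}. The crucial gain is that in the long-wave regime any cross factor $\mu\epsilon^k$ with $k\geq 1$ can be absorbed into a $O(\mu^2)$ remainder since $\epsilon\leq M\mu$.

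The next step is to Taylor expand the smooth coefficients appearing in~\eqref{eqn:GreenNaghdiMean} around $\epsilon\zeta=0$. Using $h_1=1-\epsilon\zeta$ and $h_2=\delta^{-1}+\epsilon\zeta$, one obtains
\[
\frac{h_1h_2}{h_1+\gamma h_2}= \frac{1}{\gamma+\delta}+\epsilon\frac{\delta^2-\gamma}{(\gamma+\delta)^2}\zeta+\epsilon^2 r_1(\epsilon\zeta),\qquad \frac{h_1^2-\gamma h_2^2}{(h_1+\gamma h_2)^2}=\frac{\delta^2-\gamma}{(\gamma+\delta)^2}+\epsilon\, r_2(\epsilon\zeta),
\]
with $r_1,r_2$ smooth, and tame product/composition estimates in $H^s$ (as in~\cite[App.~A]{DucheneIsrawiTalhouk}) turn these pointwise identities into $H^s$-estimates controlled by $|\zeta|_{H^{s+N}}$ for $N$ large enough; the $\epsilon^2$ remainders therefore contribute $O(\mu^2)$. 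Direct evaluation of $\overline{\Q}[1,\delta^{-1}]$ gives $\overline{\Q}[1,\delta^{-1}]v=-\nu\partial_x^2 v$ with the same $\nu$ as in $A_1$, so $\mu\overline{\Q}[h_1,h_2]v+\mu\nu\partial_x^2 v=O(\mu\epsilon)=O(\mu^2)$ in $H^s$. The nonlinear dispersive term $\mu\epsilon\partial_x(\overline{\R}[h_1,h_2]v)$ is already $O(\mu\epsilon)=O(\mu^2)$, and the capillary term expands as
\[
-\frac{\gamma+\delta}{\Bo}\frac{\partial_x k(\epsilon\sqrt\mu\zeta)}{\epsilon\sqrt\mu}=\mu\frac{\gamma+\delta}{\bo}\partial_x^3\zeta+O(\mu^2),
\]
using $\Bo^{-1}=\mu\bo^{-1}$ and $k(\eta)=-\partial_x^2\eta+O(\eta^2)$. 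Collecting these expansions identifies the right-hand side of the Green--Naghdi equation with $-\mu A_2\partial_x^3U-\epsilon A[U]\partial_x U+\mu A_1\partial_x^2\partial_t U$ (the last rewritten from $\partial_t\overline{\Q}$), modulo a remainder bounded by $\mu^2 C$ in $H^s$. This yields the bound on $R_B$.

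For the symmetric system~\eqref{eqn:BoussinesqSym}, the strategy is to left-multiply the approximate Boussinesq equation by the operator $S_0+\epsilon S[U]-\mu T\partial_x^2$ and use the definitions
\[
\Sigma_0=S_0A_0,\quad \Sigma[U]=S_0A[U]+S[U]A_0,\quad S_1=S_0A_1+T,\quad \Sigma_1=TA_0-S_0A_2,
\]
to identify terms. The cross terms dropped in the process—typically $\mu\epsilon\, S[U]A_1\partial_x^2\partial_t U$, $\epsilon^2 S[U]A[U]\partial_x U$, and $\mu^2 TA_1\partial_x^4\partial_t U$—are all bounded by $\mu^2$ in $H^s$ (possibly after trading two derivatives on $U$, hence the $N$ sufficiently large), and the image of $R_B$ under the operator $S_0+\epsilon S[U]-\mu T\partial_x^2$ is again of size $\mu^2 C$ in $H^s$ (with a loss of two derivatives, absorbed into $N$). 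This yields the bound on $R_S$.

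The main obstacle is not a conceptual one but the bookkeeping of the various orders in $\mu,\epsilon$ and the loss of derivatives at each tame product step; in particular, one must choose $N$ large enough that every remainder can be estimated in $H^s$ using only $L^\infty([0,T);H^{s+N})$ control on $\zeta^\p$ and $\partial_x\psi^\p$. Time derivatives of $\zeta^\p$ and $\partial_x\psi^\p$, when they appear (through $\partial_t U$ acting on nonlinearities), are controlled by using the full Euler system~\eqref{eqn:EulerCompletAdim} to trade a time derivative for two spatial derivatives, following the scheme already used in the proof of Proposition~\ref{cons:GNu1u2}.
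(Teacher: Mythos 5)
Your proposal is correct and follows essentially the same route as the paper's proof: reduce to the consistency of the one-dimensional flat-bottom Green--Naghdi system~\eqref{eqn:GreenNaghdiMean} via Proposition~\ref{cons:GNv}, Taylor-expand the coefficients in $\epsilon\zeta$ with tame $H^s$ estimates, absorb all $\mu\epsilon$ and $\epsilon^2$ terms into $O(\mu^2)$ using $\epsilon\leq M\mu$, control time derivatives through the full Euler system, and obtain the symmetric estimate by applying $S_0+\epsilon S[U]-\mu T\partial_x^2$ and bounding the cross terms (with $N$ large enough to pay for the lost derivatives). Your explicit expansion of the capillary term is a detail the paper leaves implicit, but it does not change the argument.
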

\begin{proof} Let us first recall that since we assume $d=1$, then defining $ v^\p\equiv \b u_2-\gamma \b u_1$ is equivalent as defining $ v^\p$ through Definition~\ref{defV}, but requires only the non-vanishing depth condition $h_1,h_2\geq h_0>0$, instead of the more stringent condition of Definition~\ref{defV}. Let us also emphasize that system~\eqref{dtzetavsv},\eqref{eqn:GNGNv} is {\em exactly} (that is, without any approximation) system~\eqref{eqn:GreenNaghdiMean}, in the case $\beta=0$ and $d=1$. Thus,
by Proposition~\ref{cons:GNv}, we know that $(\zeta^\p, v^\p)^\top$ satisfies~\eqref{eqn:GreenNaghdiMean} up to a remainder term, $R$, satisfying
\[ \big\Vert R \big\Vert_{L^\infty([0,T);H^s)^2}  \ \leq \ \mu^2\ C, \]
with $C$ as in the Proposition~\ref{cons:Bouss}.

Thus one only needs to check that the neglected terms from~\eqref{eqn:GreenNaghdiMean} in~\eqref{eqn:BoussinesqMean} (resp.~\eqref{eqn:BoussinesqSym}), when using that $\epsilon \leq M\mu$, are estimated in the same way. Let us detail briefly.

We first claim that the following holds, for any $s\geq 0$ and with $t_0>1/2$:
\begin{equation}\label{est1}
\big\vert \frac{h_1h_2}{h_1+\gamma h_2} \ - \ \frac{1}{\gamma+\delta} \ - \ \frac{\delta^2-\gamma}{(\gamma+\delta)^2}\epsilon\zeta \big\vert_{H^s} \ \leq \ \epsilon^2 C (h_0^{-1},\delta_{\max},\delta_{\min}^{-1},\big\vert \zeta \big\vert_{H^{\max{\{t_0,s\}}}} ).
\end{equation}
The formal expansion (as powers of $\epsilon\zeta$) is easily checked, so that the estimate in $L^\infty$ norm is straightforward: 
\begin{align*}
\big\vert \frac{h_1h_2}{h_1+\gamma h_2}  -  \frac{1}{\gamma+\delta}  -  \frac{\delta^2-\gamma}{(\gamma+\delta)^2}\epsilon\zeta \big\vert_{L^\infty} \ &= \  \big\vert  \frac{(\delta^2-\gamma)(1-\gamma)}{(\gamma+\delta)^2}\frac{(\epsilon\zeta)^2}{h_1+\gamma h_2}  \big\vert_{L^\infty}  \\
 &\leq \ \epsilon^2\  C (h_0^{-1},\delta_{\max},\delta_{\min}^{-1},\big\vert \zeta \big\vert_{L^{\infty}} ).
\end{align*}
The control in $H^s$ is slightly more elaborate, as $h_1,h_2$ are not bounded in $H^s$, since they do not decrease at infinity. We let the reader refer to~\cite[Lemma~4.5]{DucheneIsrawiTalhouk} to see how this technical difficulty may be faced. 

It follows from~\eqref{est1} that
\[
 \partial_x\big( \frac{h_1h_2}{h_1+\gamma h_2} v \big) \ = \ \frac{1}{\gamma+\delta} \partial_x  v \ + \ \epsilon\  \frac{\delta^2-\gamma}{(\gamma+\delta)^2}\partial_x \big(\zeta\  v\big) \ + \ \epsilon^2 R_1 ,
\]
 with $\big\vert R_1 \big\vert_{H^s} \ \leq \  C (h_0^{-1},\delta_{\max},\delta_{\min}^{-1},\big\vert \zeta \big\vert_{H^{\max{\{t_0,s\}}+1}},\big\vert  v \big\vert_{H^{\max{\{t_0,s\}}+1}} ).$

One obtains in the same way the following estimates:
\[
 \partial_x \Big(\dfrac{h_1^2 -\gamma h_2^2 }{(h_1+\gamma h_2)^2}  v^2\Big) \ = \ \partial_x \Big(\frac{\delta^2-\gamma}{(\gamma+\delta)^2}  v^2\Big) + \epsilon R_2,  \]
with $\big\vert R_2 \big\vert_{H^s} \ \leq \  C (h_0^{-1},\delta_{\max},\delta_{\min}^{-1},\big\vert \zeta \big\vert_{H^{\max{\{t_0,s\}}+1}},\big\vert  v \big\vert_{H^{\max{\{t_0,s\}}+1}} )$, and
 \[
 \partial_t\big( \overline{\Q}[h_1,h_2] v\big)  \ = \ - \frac{1+\gamma\delta}{3\delta(\gamma+\delta)} \partial_x^2\partial_t v \ + \ \epsilon R_3 \ , \quad \quad  \partial_x \Big(
\big\vert\overline{\R}[h_1,h_2] v \Big) \ \equiv\  R_4,
\]
with $\big\vert R_3 \big\vert_{H^s} =C (h_0^{-1},\delta_{\max},\delta_{\min}^{-1},\big\vert \zeta \big\vert_{H^{\max{\{t_0,s\}}+3}} ,\big\vert \partial_t \zeta \big\vert_{H^{\max{\{t_0,s\}}+2}} ,\big\vert  v \big\vert_{H^{\max{\{t_0,s\}}+3}} ,\big\vert \partial_t v \big\vert_{H^{\max{\{t_0,s\}}+2}} ) $ and $\big\vert R_4 \big\vert_{H^s} =C (h_0^{-1},\delta_{\max},\delta_{\min}^{-1},\big\vert \zeta \big\vert_{H^{\max{\{t_0,s\}}+3}}  ,\big\vert  v \big\vert_{H^{\max{\{t_0,s\}}+3}}) $.

Let us recall that estimates on $\partial_t\zeta$ and $\partial_t v$ may be deduced from estimates on $\zeta, v$ as they satisfy the full Euler system~\eqref{eqn:EulerCompletAdim}, allowing for a loss of derivatives.

Altogether, choosing $N$ sufficiently large and since $\epsilon\leq M\mu$ when $\p\in \P_{LW}$, it follows that $(\zeta, v)^\top$ satisfy~\eqref{eqn:BoussinesqMean} up to a remainder term, denoted $R_B$, and bounded by
\[ \big\Vert R_B \big\Vert_{L^\infty([0,T);H^s)^2} \ \leq \ \big\Vert R \big\Vert_{L^\infty([0,T);H^s)^2}+\epsilon^2\big\Vert |R_1|+|R_2|+|R_3|+| R_4| \big\Vert_{L^\infty([0,T);H^s)^2} \ \leq  \mu^2\ C .\]

 One obtains similarly the estimate concerning the symmetric system~\eqref{eqn:BoussinesqMean}, after controlling the extra error terms:
\begin{multline*} \big\Vert R_S \big\Vert_{L^\infty([0,T);H^s)^2}  \ \leq \ \big\Vert R_B \big\Vert_{L^\infty([0,T);H^s)^2}  \\ + \ \big\Vert \big(\epsilon S[U]-\mu T\partial_x^2\big)\big( -\mu A_1\partial_x^2\partial_t U  + \epsilon A[U]\partial_xU  +  \mu A_2\partial_x^3 U\big)  \big\Vert_{L^\infty([0,T);H^s)^2} .\end{multline*}
This is easily checked if $N$ is sufficiently large, and using again that $\epsilon \leq M\mu$. Proposition~\ref{cons:Bouss} is proved.
\end{proof}

In addition to its justification in the sense of consistency, one is able to fully justify the symmetric Boussinesq model~\eqref{eqn:BoussinesqSym}, thanks to its following properties:
\begin{enumerate}
\item The matrices $S_0$, $\Sigma_0$, $S_1$, $\Sigma_1$ are $2$-by-$2$ {\em symmetric} matrices;
\item $S[\cdot]$ and $\Sigma[\cdot]$ are linear mappings with values into $2$-by-$2$ {\em symmetric} matrices;
\item $S_0$ and $S_2$ are {\em definite positive}.
\end{enumerate}
These properties allow to control a natural energy of the system:
\[ E^s(U) \ \equiv \ \big(\ S_0 U \ , \ U \ \big) \ + \ \epsilon \big(\ S[\underline{U}] U \ , \  U\ \big) \ + \ \mu \big(\ S_1\partial_x U \ , \ \partial_x U\ \big),\]
which is equivalent to the scaled Sobolev norm $X^{s+1}_{\mu}$:
\[ \big\vert \ U \ \big\vert_{X^{s+1}_{\mu}}^2 \ = \ \big\vert \ U \ \big\vert_{H^s\times H^s}^2 \ + \ \mu  \big\vert \ \partial_x U \ \big\vert_{H^s\times H^s}^2, \]
provided that $\epsilon  \underline{U} $ is sufficiently small, (for the nonlinear terms to be controlled).
\medskip

The following Propositions are a direct consequence from~\cite[Proposition 2.6 and 2.8]{Duchene11a} (where the author deal with the case of internal waves with a free surface, so that the system has four equations, but possess the exact same structure), and we do not detail the proof further on.
\begin{Theorem}[Well-posedness]\label{WPBouss}
Let $U^0\in H^{s+1}(\RR)^2$, with $s>3/2$. Then there exists a constant ${C_0=C(\delta_{\min}^{-1},\delta_{\max},M,\bo_{\min}^{-1})>0}$ and $\epsilon_0=(C_0 \big\vert U^0\big\vert_{X^{s+1}_\mu})^{-1}$ such that for any $0\leq \epsilon \leq \epsilon_0$, there exists ${T>0}$, independent of $\epsilon$, and a unique solution ${U\in C^0([0,T/\epsilon);X^{s+1}_\mu)\cap C^1([0,T/\epsilon);X^{s}_\mu)}$ of the Cauchy problem~\eqref{eqn:BoussinesqSym} with ${U\id{t=0}=U^0}$. 
 
 Moreover, one has the following estimate for $t\in[0,T/\epsilon]$:
\begin{equation}\label{EstWP}
\big| U\big|_{L^\infty([0,t];X^{s+1}_\mu)}+ \big| \partial_t U\big|_{L^\infty([0,t] ; X^{s}_\mu)} \leq C_0  \frac{\big| U^0\big|_{X^{s+1}_\mu}}{1-C_0  \big| U^0\big|_{X^{s+1}_\mu} \epsilon t}.
\end{equation}
\end{Theorem}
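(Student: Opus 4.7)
The plan is to follow the classical route for symmetrizable first-order systems augmented with linear dispersive terms: derive a priori energy estimates exploiting the three symmetry/positivity properties highlighted before the theorem, construct solutions by a Friedrichs regularization and pass to the limit, and recover uniqueness and continuity with respect to initial data through a similar energy estimate on the difference of two solutions. This is essentially the route already carried out in~\cite{Duchene11a} for the analogous internal-wave system with a free surface.

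For the a priori bound, I would work with the energy
\[ \mathcal{E}^s(U)\ \equiv\ \big(\,(S_0+\epsilon S[U])\,\Lambda^s U,\,\Lambda^s U\,\big)\ +\ \mu\big(\,S_1\,\Lambda^s \partial_x U,\,\Lambda^s \partial_x U\,\big). \]
Positive definiteness of $S_0$ and $S_1$, together with the linearity and symmetry of $S[\cdot]$, ensures $\mathcal{E}^s(U)\simeq |U|_{X^{s+1}_\mu}^2$ as soon as $\epsilon|U|_{L^\infty}\leq \alpha_0$ for some $\alpha_0=\alpha_0(\delta_{\min},\delta_{\max},M,\bo_{\min})$; this is precisely what determines the threshold $\epsilon_0=(C_0|U^0|_{X^{s+1}_\mu})^{-1}$ in the statement. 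Applying $\Lambda^s$ to~\eqref{eqn:BoussinesqSym} and testing against $\Lambda^s U$, the constant-coefficient principal terms (those built on $\Sigma_0$ and $\Sigma_1$) integrate to zero thanks to their symmetry, while the quasilinear contributions $\epsilon \Sigma[U]\partial_x U$ and $\epsilon S[U]\partial_t U$ leave only commutators, which are controlled by Kato--Ponce tame estimates. The resulting differential inequality is of the form $\frac{d}{dt}\mathcal{E}^s(U)\leq \epsilon\,C_0\,|U|_{X^{s+1}_\mu}\,\mathcal{E}^s(U)$; integrating it yields exactly the nonlinear bound~\eqref{EstWP} on $[0,T/\epsilon]$ with $T=T(C_0|U^0|_{X^{s+1}_\mu})$, and a continuity argument shows that the smallness $\epsilon|U|_{L^\infty}\leq \alpha_0$ is propagated on this interval.

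Existence is obtained via Friedrichs regularization: introducing a smooth frequency cut-off $J_\nu=\chi(\nu|D|)$, the mollified version of~\eqref{eqn:BoussinesqSym} becomes a Banach-space ODE on $X^{s+1}_\mu$ -- here the key fact is that $S_0-\mu S_1\partial_x^2$ is an isomorphism from $X^{s+1}_\mu$ onto $X^{s}_\mu$ uniformly in $\mu\in(0,\mu_{\max}]$, so that the same remains true for $S_0+\epsilon S[U]-\mu S_1\partial_x^2$ under the smallness condition -- and therefore admits a unique local solution $U^\nu$ by Cauchy--Lipschitz. The a priori bound being uniform in $\nu$, the interval of existence extends up to $[0,T/\epsilon]$ with a $\nu$-uniform $X^{s+1}_\mu$ bound. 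A standard compactness argument produces a subsequence converging weakly-$*$ to a solution of~\eqref{eqn:BoussinesqSym}; continuity in time with values in $X^{s+1}_\mu$ (rather than only weak continuity) is then recovered by the Bona--Smith approximation procedure. Uniqueness and continuous dependence on the data follow by applying an analogous energy estimate to the difference $W=U_1-U_2$ of two solutions, using the lower-order energy $\mathcal{E}^{s-1}$ symmetrized around $U_1+U_2$.

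The subtlest point, in my view, will be the $\mu$-uniform inversion of $S_0+\epsilon S[U]-\mu S_1\partial_x^2$, needed both to set up the regularized ODE and to extract $\partial_t U$ from the equation through $\partial_t U = -(S_0+\epsilon S[U]-\mu S_1\partial_x^2)^{-1}(\Sigma_0+\epsilon\Sigma[U]-\mu\Sigma_1\partial_x^2)\partial_x U$. This is precisely where the scaled norm $X^{s+1}_\mu$ and the positivity of \emph{both} $S_0$ and $S_1$ are crucial; it is also where the bound $\bo^{-1}\leq \bo_{\min}^{-1}$ enters, namely to make the positive lower bound on $S_1$ uniform in $\bo\in[\bo_{\min},\infty)$.
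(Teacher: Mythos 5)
Your proposal is correct and takes essentially the same approach as the paper: the paper does not spell out a proof but lists exactly the symmetrizer structure (symmetry of $S_0,\Sigma_0,S_1,\Sigma_1$ and of $S[\cdot],\Sigma[\cdot]$, positivity of $S_0,S_1$) and the energy $E^s(U)$ equivalent to the $X^{s+1}_\mu$-norm, and then invokes \cite[Propositions 2.6 and 2.8]{Duchene11a}, whose proof is precisely the a priori energy estimate / Friedrichs regularization / difference-estimate scheme you describe. One minor caveat: your ``isomorphism from $X^{s+1}_\mu$ onto $X^s_\mu$'' is not literally true uniformly in $\mu$; the statement actually used is the uniform boundedness on $H^\sigma$ of $(S_0+\epsilon S[U]-\mu S_1\partial_x^2)^{-1}$, $\sqrt{\mu}\,\partial_x(S_0+\epsilon S[U]-\mu S_1\partial_x^2)^{-1}$ and $\mu\,\partial_x^2(S_0+\epsilon S[U]-\mu S_1\partial_x^2)^{-1}$ (and $\bo\geq\bo_{\min}$ serves to bound the coefficients from above, the positivity of $S_1$ holding for all $\bo$), which yields $\partial_t U\in X^s_\mu$ as claimed and does not affect the argument.
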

\noindent A stability result similar to Proposition~\ref{prop:stability} holds, so that the symmetric Boussinesq system is well-posed in the sense of Hadamard.
\begin{Theorem}[Convergence]
Let $\p\in \P_{LW}$ and $U^0\equiv(\zeta^0,\psi^0)^\top\in H^{s+N}(\RR)^2$, with $N$ sufficiently large, satisfy the hypotheses of Theorem~5 in~\cite{Lannes13}; see Theorem~\ref{th:WPEuler}. Then there exists $C,T,\epsilon_0>0$, independent of $\p\in\P_{LW}$ and $\Bo^{-1}\leq \mu\bo_{\min}^{-1}$ , such that for any $0\leq \epsilon\leq \epsilon_0$,
\begin{itemize}
\item There exists a unique solution $U\equiv (\zeta,\psi)^\top$ to the full Euler system~\eqref{eqn:EulerCompletAdim}, with $\beta=0$ and $d=1$, defined on $[0,T]$ and with initial data $(\zeta^0,\psi^0)^\top$ (provided by Theorem~5 in~\cite{Lannes13});
\item The non-vanishing depth condition is satisfied for $\epsilon\leq \epsilon_0 $, so that one can define $ v\equiv \b u_2-\gamma \b u_1$, with $\b u_1,\b u_2$ as in Definition~\ref{defU1U2};
\item There exists a unique solution $U_B\equiv (\zeta_B,v_B)^\top$ to the symmetric Boussinesq model~\eqref{eqn:BoussinesqSym}, defined on $[0,T]$ and with initial data $(\zeta^0, v\id{t=0})^\top$ (provided by Theorem~\ref{WPBouss});
\item The difference between the two solutions is controlled as
\[ \big\vert (\zeta, v)-(\zeta_B,v_B) \big\vert_{L^\infty([0,T];X^{s+1}_\mu)}\leq C \mu^2 t.\]
\end{itemize}
The above results hold on time interval $t\in[0,T/\epsilon]$ with $T$ bounded by below, independently of $\p\in\P_{LW}$, provided that a stronger criterion is satisfied by the initial data. This corresponds to setting $\varrho=1$ in Theorem~\ref{th:WPEuler}; see criterion (5.5) and Theorem~6 in~\cite{Lannes13} for the precise statement.
\end{Theorem}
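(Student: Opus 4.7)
The plan is to follow the classical consistency--existence--stability scheme, which fully justifies an asymptotic model as soon as its Cauchy problem and that of the full Euler system are both well-posed on a common time interval.

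First, I would apply Theorem~\ref{th:WPEuler} with $\varrho=0$ for the short-time statement, and $\varrho=1$ for the long-time version, to obtain the unique solution $U=(\zeta,\psi)^\top$ of the full Euler system on $[0,T]$ (resp.\ $[0,T/\epsilon]$), bounded in a sufficiently high Sobolev norm uniformly in $\p\in\P_{LW}$ and $\Bo^{-1}\leq\mu\bo_{\min}^{-1}$, and preserving the non-vanishing depth condition throughout. This legitimates the definition $v\equiv\b u_2-\gamma\b u_1$ from Definition~\ref{defU1U2}, and the first-order estimates of Propositions~\ref{prop:devG1} and~\ref{prop:devH} show in particular that $v\id{t=0}$ lies in $H^{s+1}$ with norm controlled by that of $\partial_x\psi^0$. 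I would then apply Theorem~\ref{WPBouss} to the initial data $(\zeta^0,v\id{t=0})^\top$ to produce the unique Boussinesq solution $U_B=(\zeta_B,v_B)^\top$, possibly after shrinking $T$ and $\epsilon_0$ so that both solutions live on the same interval.

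Next, I would invoke Proposition~\ref{cons:Bouss} to assert that the pair $(\zeta,v)^\top$ coming from the full Euler flow is an \emph{approximate} solution of~\eqref{eqn:BoussinesqSym}, satisfying the system up to a remainder $R_S$ of size $\O(\mu^2)$ in $L^\infty([0,T];H^s)^2$, uniformly in $\p\in\P_{LW}$. The difference $W\equiv(\zeta,v)^\top-(\zeta_B,v_B)^\top$ vanishes at $t=0$ and solves a linearization of the symmetric Boussinesq system~\eqref{eqn:BoussinesqSym} with source $R_S$. Exploiting that $S_0,\Sigma_0,S_1,\Sigma_1$ and $S[\cdot],\Sigma[\cdot]$ are symmetric, with $S_0,S_1$ positive definite, I would derive an energy estimate of the form $\frac{d}{dt}\vert W\vert_{X^{s+1}_\mu}\lesssim\epsilon\lambda\vert W\vert_{X^{s+1}_\mu}+\vert R_S\vert_{H^s\times H^s}$ on the natural functional $E^{s+1}$, which is equivalent to $\vert\cdot\vert_{X^{s+1}_\mu}^2$ as soon as $\epsilon\vert\underline U\vert$ is small enough. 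Gronwall's lemma then yields $\vert W(t)\vert_{X^{s+1}_\mu}\leq C\mu^2 t\, e^{\epsilon\lambda t}$, and the exponential factor remains of order one on the relevant time scale, producing the announced bound.

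The hard part will be the stability step: carrying out the symmetric-type energy estimate on the linearized difference equation in the scaled space $X^{s+1}_\mu$ requires a careful treatment of the variable coefficients $S[U],\Sigma[U]$ through commutator and tame product estimates in $H^s$, keeping every constant independent of $\p\in\P_{LW}$ and of $\Bo^{-1}\leq\mu\bo_{\min}^{-1}$. A secondary concern is to propagate along the flow the non-vanishing depth and smallness conditions needed both to define $v$ consistently and to ensure that $\epsilon\vert U\vert+\epsilon\vert U_B\vert$ stays small enough for the energy to remain coercive. Fortunately, the exact same computation has been performed in~\cite[Propositions 2.6 and 2.8]{Duchene11a} for a four-equation system sharing the same algebraic structure, so I would appeal directly to that argument rather than reproduce its details.
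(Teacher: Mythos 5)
Your proposal follows the same consistency--existence--stability scheme on which the paper relies: consistency of the full Euler solution with~\eqref{eqn:BoussinesqSym} from Proposition~\ref{cons:Bouss}, existence of the two solutions from Theorem~\ref{th:WPEuler} (with $\varrho=0$ or $\varrho=1$) and Theorem~\ref{WPBouss}, and a symmetric energy/Gronwall estimate on the difference in $X^{s+1}_\mu$, which the paper, exactly like you, delegates to \cite[Propositions 2.6 and 2.8]{Duchene11a} on account of the identical algebraic structure. Since the paper gives no further detail beyond this reduction, your reconstruction is correct and essentially identical to the intended argument.
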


\subsection{Scalar models}\label{sec:scalar}
In this section, we are interested in the justification of scalar asymptotic models for the propagation of internal waves (as opposed to all aforementioned models, which consist in a system of evolution equations). The derivation and study of such models have a very rich and ancient history, starting with the work of Boussinesq\cite{Boussinesq71} and Korteweg-de Vries\cite{KortewegDe95} which introduced the famous {\em Korteweg-de Vries equation}
\[
\partial_t u \ + \ c\partial_x u \ + \ \alpha u\partial_x u \ + \ \nu\partial_x^3 u \ = \ 0,
\]
 as a models for the propagation of surface gravity waves, in the long wave regime. However, the complete rigorous justification of such model is much more recent~\cite{KanoNishida86,SchneiderWayne00,BonaColinLannes05} (see~\cite{Duchene11a} for the bi-fluidic case). 
 
 One obvious discrepancy between scalar models such as the KdV equation and coupled models, is that the former selects a direction of propagation (right or left, depending on the sign of $c$). On the contrary, the first order approximation of coupled models is a linear wave equation, which predicts that any initial perturbation of the flow will split into two counter-propagating waves. This yields two very different possible justifications of scalar models:
 \begin{itemize}
\item {\em Unidirectional approximation.} One proves that if the initial perturbation (that is the deformation of the interface as well as shear layer-mean velocity) is carefully chosen, then the flow can be approximated as a solution of a scalar equation. Physically speaking, this means that we focus our attention on only one of the two counter-propagating waves after they have split.
\item {\em Decouled approximation.} For a generic initial perturbation of the flow, we approximate the flow as the superposition of two (uncoupled) waves, each one driven by a scalar equation. This justification is of course more general as for the admissible initial data, but its precision is often much worse, as controlling the coupling effects between the two counter-propagating waves (and in particular their secular growth), which are neglected in the decoupled approximation, is arduous. 
 \end{itemize}
 
A major difference between the water-wave case and the bi-fluidic case is that in the latter, there exists a {\em critical ratio} ($\delta^2=\gamma$) for which the first order (quadratic) nonlinearity of our models vanishes; see the Boussinesq system~\eqref{eqn:BoussinesqMean} for example. This allows to consider a regime with greater nonlinearities than the long wave regime (and in particular the Camassa-Holm regime; see Definition~\ref{Def:Regimes}), and thus motivates higher order models than the Korteweg-de Vries equations, in order to recover a $\O(\mu^2)$ precision. Here, we focus on the so-called {\em Constantin-Lannes} equation:
\begin{multline} \label{eq:CLI}
 (1- \ \mu\beta \partial_x^2)\partial_t v \ +\ \epsilon \alpha_1 v\partial_x v \ + \ \epsilon^2 \alpha_2 v^2\partial_x v\ + \ \epsilon^3 \alpha_3 v^3\partial_x v \\
 +\ \mu\nu \partial_x^3 v \ \ + \ \mu\epsilon\partial_x\big(\kappa_1 v\partial_x^2 v+\kappa_2(\partial_x v)^2\big) \ = \ 0, 
\end{multline}
where $\beta,\alpha_i$ ($i=1,2,3$), $\nu$, $\kappa_1$, $\kappa_2$, are fixed, given parameters . 
This equation has been studied and justified as a model for the propagation of {\em unidirectional} surface gravity waves in~\cite{ConstantinLannes09}. In particular, the well-posedness of the Cauchy problem for~\eqref{eq:CLI} in Sobolev spaces is proved, provided $\beta>0$.

The justification of both the unidirectional and decoupled approximation in the bi-fluidic case, in the sense of consistency, have been worked out in~\cite{Duchene13}. In what follows, we restrict to the Camassa-Holm regime and to the Constantin-Lannes equations for the sake of simplicity, but more general results may be found therein. The full justification, and in particular the convergence results stated below is then a consequence of the properties of the Green-Naghdi type model introduced by the authors in~\cite{DucheneIsrawiTalhouk}, and recalled in section~\ref{sec:Serre}. We let the reader refer to the aforementioned references for more details, and disclose the statements below without further comments.

\begin{Definition}[Unidirectional approximation]
Let $\zeta^0\in H^s(\RR)$, $s>5/2$, $(\theta,\lambda)\in \RR^2$, and set parameters $\p=(\epsilon,\mu,\gamma,\delta)\in\P_{CH}$, as defined in Definition~\eqref{Def:Regimes}. Then the {\em Constantin-Lannes unidirectional approximation}, $(\zeta_{\text{u}},v_{\text{u}})$, is defined as follows.

 Let $\zeta_{\text{u}}$ be the unique solution of 
\begin{multline} \label{eq:CLu}
\partial_t \zeta + \partial_x \zeta + \epsilon\alpha_1\zeta\partial_x \zeta + \epsilon^2 \alpha_2 \zeta^2\partial_x \zeta +\epsilon^3 \alpha_3 \zeta^3\partial_x \zeta+\mu\nu_x^{\theta,\lambda}\partial_x^3\zeta-\mu\nu_t^{\theta,\lambda}\partial_x^2\partial_t\zeta \\
+\mu\epsilon\partial_x\left(\kappa_1^{\theta,\lambda} \zeta\partial_x^2\zeta +\kappa_2^{\theta} (\partial_x \zeta)^2\right) \ =0\ ,
 \end{multline}
with ${\zeta_{\text{u}}}\id{t=0}=\zeta^0$, and parameters 
\[ \begin{array}{l}
\alpha_1 =  \frac32\frac{\delta^2-\gamma}{\gamma+\delta}, \
\alpha_2 =  \frac{21(\delta^2-\gamma)^2}{8(\gamma+\delta)^2}-3\frac{\delta^3+\gamma}{\gamma+\delta},
\ \alpha_3 =  \frac{71(\delta^2-\gamma)^3}{16(\gamma+\delta)^3} -\frac{37(\delta^2-\gamma)(\delta^3+\gamma)}{4(\gamma+\delta)^2}+\frac{5(\delta^4-\gamma)}{\gamma+\delta}, \\
 \nu_x^{\theta,\lambda} = \ (1-\theta-\lambda)\frac{1+\gamma\delta}{6\delta(\gamma+\delta)} , \quad
 \nu_t^{\theta,\lambda} = \ (\theta+\lambda)\frac{1+\gamma\delta}{6\delta(\gamma+\delta)} ,\\
\kappa_1^{\theta,\lambda} =  \frac{(14-6(\theta+\lambda))(\delta^2-\gamma)(1+\gamma\delta)}{24\delta(\gamma+\delta)^2}-\frac{1-\gamma}{6(\gamma+\delta)}, \quad
\kappa_2^{\theta} =  \frac{(17-12\theta)(\delta^2-\gamma)(1+\gamma\delta)}{48\delta(\gamma+\delta)^2}-\frac{1-\gamma}{12(\gamma+\delta)}.
\end{array}
\]
It is assumed that $\theta,\lambda$ are chosen such that $\nu_t^{\theta,\lambda}>0$, so that~\eqref{eq:CLu} is well-posed, and $\zeta(t,\cdot)\in H^s$ is uniquely defined over time scale $\O(1/\epsilon)$; see~\cite[Proposition 4]{ConstantinLannes09}. 

Then define
$v_{\text{u}}$ as $ v_{\text{u}}=\frac{h_1+\gamma h_2}{h_1h_2}\underline{v}[\zeta_{\text{u}}]$, with
\begin{equation}\label{ztov}
\underline{v}[\zeta] \ = \ \zeta + \epsilon\frac{\alpha_1}2\zeta^2 + \epsilon^2 \frac{\alpha_2}{3} \zeta^3 +\epsilon^3 \frac{\alpha_3}4 \zeta^4 +\mu\nu\partial_x^2\zeta+\mu\epsilon\left(\kappa_1 \zeta\partial_x^2\zeta +\kappa_2 (\partial_x \zeta)^2\right),
\end{equation}
where parameters $\alpha_1,\alpha_2,\alpha_3$ are as above, and
$ \nu=\nu^{0,0}_x$, $ \kappa_1=\kappa_1^{0,0}$, $ \kappa_2=\kappa_2^{0}$.
\end{Definition}
\begin{Theorem}[Convergence of the unidirectional approximation] \label{prop:unidir}
Let $\p\in \P_{CH}$ and $U^0\equiv(\zeta^0,\psi^0)^\top\in H^{s+N}(\RR)^2$, with $N$ sufficiently large, satisfy the hypotheses of~\cite[Theorem~5]{Lannes13} (see Theorem~\ref{th:WPEuler}) as well as the ones of Theorem~\ref{thbi1mr}. Define $v^0\equiv \b u_2^0-\gamma \b u_1^0$, with $\b u_1^0,\b u_2^0$ as in Definition~\ref{defU1U2}, and assume that  $(\zeta^0,v^0)^\top$ satisfies~\eqref{ztov}. Then there exists $C,T>0$, independent of $\p$, such that
\begin{itemize}
\item There exists a unique solution $U\equiv (\zeta,\psi)^\top$ to the full Euler system~\eqref{eqn:EulerCompletAdim}, with $\beta=0$ and $d=1$, defined on $[0,T]$ and with initial data $(\zeta^0,\psi^0)^\top$ (provided by Theorem~5 in~\cite{Lannes13}); We denote $v=\b u_2-\gamma \b u_1$ the shear layer-mean velocity with $\b u_1,\b u_2$ as in Definition~\ref{defU1U2}. 
\item The Constantin-Lannes unidirectional approximation $(\zeta_{u},v_{\text{u}})^\top$ with initial data $(\zeta^0,v^0)^\top$ is uniquely defined for $t\in [0,T]$ as described above.
\item The difference between the two solutions is controlled as
\[ \big\vert (\zeta,v)-(\zeta_{\text{u}},v_{\text{u}}) \big\vert_{L^\infty([0,T];H^s)^2} \leq C (\mu^2+\Bo^{-1}) t.\]
\end{itemize}
If the initial data satisfies a stronger stability hypothesis (see~\cite[Theorem~6]{Lannes13}), then the result holds for large time $t\in [0,T/\epsilon]$, with $T$ independent of $\p$.
\end{Theorem}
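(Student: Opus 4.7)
The plan is to factor the convergence Euler $\to$ Constantin-Lannes through the Green-Naghdi model~\eqref{eq:Serre2mr}: I will combine the already-established Euler $\to$ Green-Naghdi estimate of Theorem~\ref{prop:stability} with a new Green-Naghdi $\to$ Constantin-Lannes estimate, the latter obtained by combining an algebraic consistency check with the a priori stability between approximate solutions of~\eqref{eq:Serre2mr} alluded to in the Remark following Theorem~\ref{prop:stability}. The two existence/uniqueness statements of the theorem are immediate: the full Euler solution is given by Theorem~\ref{th:WPEuler} (invoked through Theorem~\ref{prop:stability}), and the existence of $(\zeta_u,v_u)^\top$ on $[0,T]$ is provided by~\cite[Proposition~4]{ConstantinLannes09} under the standing assumption $\nu_t^{\theta,\lambda}>0$ built into the definition.

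By Theorem~\ref{prop:stability} applied to the initial data $U^0$, I obtain simultaneously the solution $(\zeta,v)^\top$ of~\eqref{eqn:EulerCompletAdim}, the solution $(\zeta_a,v_a)^\top$ of~\eqref{eq:Serre2mr} with matching initial data $(\zeta^0,v^0)^\top$, and the first piece of the triangle inequality $|(\zeta,v)-(\zeta_a,v_a)|_{L^\infty([0,T];X^s)} \leq C(\mu^2+\Bo^{-1})t$; no further work is needed for this step.

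The substantive step is to show that $(\zeta_u,v_u)^\top$ is an \emph{approximate} solution of~\eqref{eq:Serre2mr}, with a residual of size $\O(\mu^2)$ in $L^\infty([0,T];H^s)^2$. Inserting $v_u=\frac{h_1+\gamma h_2}{h_1 h_2}\underline{v}[\zeta_u]$ from~\eqref{ztov} into~\eqref{eq:Serre2mr}, and using~\eqref{eq:CLu} to replace $\partial_t\zeta_u$ by spatial derivatives wherever it appears, reduces the task to verifying that all leftover monomials are of size $\mu^2$, $\mu\epsilon$, $\mu\epsilon^2$ or $\epsilon^4$, each of which is $\O(\mu^2)$ in the Camassa-Holm regime $\epsilon\lesssim\sqrt\mu$. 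The coefficients $\alpha_i,\nu_{x,t}^{\theta,\lambda},\kappa_i^{\theta,\lambda}$ are designed precisely so that these cancellations occur, with the parameters $(\theta,\lambda)$ providing the Benjamin-Bona-Mahony-type freedom to exchange spatial and temporal derivatives at subprincipal order; the computation itself has been carried out in~\cite{Duchene13}. Once this consistency is in hand, I apply the a priori estimate between two approximate solutions of~\eqref{eq:Serre2mr} (Proposition~7.1 and Theorem~7.5 in~\cite{DucheneIsrawiTalhouk}): since $(\zeta_a,v_a)$ is an exact solution of~\eqref{eq:Serre2mr}, $(\zeta_u,v_u)$ an approximate one with $\O(\mu^2)$ residual, and the compatibility~\eqref{ztov} makes them coincide at $t=0$, this estimate yields $|(\zeta_a,v_a)-(\zeta_u,v_u)|_{L^\infty([0,T];X^s)}\leq C\mu^2 t$. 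The triangle inequality, together with the elementary bound $|\cdot|_{H^s\times H^s}\leq|\cdot|_{X^s}$, then produces the announced estimate. The long-time $[0,T/\epsilon]$ variant follows by invoking Theorem~\ref{th:WPEuler} with $\varrho=1$, exactly as in the last statement of Theorem~\ref{prop:stability}.

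The main obstacle is the algebraic consistency step: one must push the polynomial expansion of $v_u$ in $\zeta_u$, including the cubic and mixed dispersive-nonlinear terms like $\mu\epsilon\,\zeta_u\partial_x^2\zeta_u$, through the non-constant prefactor $\frac{h_1+\gamma h_2}{h_1 h_2}$ and track all cancellations to order $\O(\mu^2)$. This is where the extra freedom of the $(\theta,\lambda)$ parameters becomes indispensable and where the specific values of $\alpha_i,\nu_{x,t}^{\theta,\lambda},\kappa_i^{\theta,\lambda}$ are forced; every other ingredient of the argument is a direct quotation of a previously stated result.
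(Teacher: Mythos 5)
Your proposal is correct and follows exactly the strategy the paper itself indicates (the paper gives no detailed proof, but the Remark after Theorem~\ref{prop:stability} and the opening paragraph of Section~\ref{sec:scalar} prescribe precisely this route): factor the error through the Green-Naghdi model~\eqref{eq:Serre2mr} using Theorem~\ref{prop:stability}, invoke the consistency of the Constantin-Lannes unidirectional approximation established in~\cite{Duchene13}, and conclude with the a priori estimate between exact and approximate solutions of~\eqref{eq:Serre2mr} from~\cite{DucheneIsrawiTalhouk}, the compatibility condition~\eqref{ztov} ensuring matching initial data. The only minor points you leave implicit (uniform bounds on $(\zeta_{\text{u}},v_{\text{u}})$ and persistence of~\eqref{depthcond},\eqref{CondEllipticity} along the approximation, needed to apply the stability estimate) are exactly those handled in the cited references.
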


\begin{Definition}[Decoupled approximation] \label{Def:CL}
Let $(\zeta^0,v^0)^\top\in H^s(\RR)^2$, $s>5/2$, and set parameters $(\epsilon,\mu,\gamma,\delta)\in\P_{CH}$, as defined in Definition~\ref{Def:Regimes}, and $(\lambda,\theta)\in\RR^2$. The {\em Constantin-Lannes decoupled approximation} is then
\[ U_{\text{CL}}\ \equiv \ \Big(v_+(t,x-t)+v_-(t,x+t),(\gamma+\delta)\big(v_+(t,x-t)-v_-(t,x+t)\big)\Big),\]
 where ${v_\pm}\id{t=0} \ = \ \frac12(\zeta^0\pm\frac{v^0}{\gamma+\delta})\id{t=0}$ and $v_\pm=(1\pm\mu\lambda \partial_x^2)^{-1}v_\pm^\lambda$ where $v_\pm^\lambda$ satisfies
\begin{multline} \label{eq:CL}
 \partial_t v_\pm^\lambda \ \pm \ \epsilon\alpha_1 v_\pm^\lambda\partial_x v_\pm^\lambda \ \pm \ \epsilon^2 \alpha_2 (v_\pm^\lambda)^2\partial_x v_\pm^\lambda\ \pm \ \epsilon^3 \alpha_3^{\theta,\lambda}(v_\pm^\lambda)^3\partial_x v_\pm^\lambda \\
\pm\ \mu\nu^{\theta,\lambda}_x \partial_x^3 v_\pm^\lambda \ - \ \mu\nu^{\theta,\lambda}_t \partial_x^2\partial_t v_\pm^\lambda \ \pm \ \mu\epsilon\partial_x\big(\kappa_1^{\theta,\lambda} v_\pm^\lambda\partial_x^2 v_\pm^\lambda+\kappa_2^{\theta}(\partial_x v_\pm^\lambda)^2\big) \ = \ 0, 
\end{multline}
with parameters given  by
\[ 
\begin{array}{c} 
\alpha_1=\frac32\frac{\delta^2-\gamma}{\gamma+\delta},\quad \alpha_2=-3\frac{\gamma\delta(\delta+1)^2}{(\gamma+\delta)^2},\quad \alpha_3=-5\frac{\delta^2(\delta+1)^2\gamma(1-\gamma)}{(\gamma+\delta)^3},\\ 
 \nu_t^{\theta,\lambda}\equiv \frac\theta6\frac{1+\gamma\delta}{\delta(\delta+\gamma)} + \lambda , \qquad \nu_x^{\theta,\lambda}\equiv \frac{1-\theta}6\frac{1+\gamma\delta}{\delta(\delta+\gamma)} -\lambda,\\
 \kappa_1^{\theta,\lambda} \equiv \frac{(1+\gamma\delta)(\delta^2-\gamma)}{3\delta(\gamma+\delta)^2}(1+\frac{1-\theta}4)-\frac{(1-\gamma)}{6(\gamma+\delta)}+\lambda\frac32\frac{\delta^2-\gamma}{\gamma+\delta} , \quad
 \kappa_2^\theta \equiv \frac{(1+\gamma\delta)(\delta^2-\gamma)}{3\delta(\gamma+\delta)^2}(1+\frac{1-\theta}4)-\frac{(1-\gamma)}{12(\gamma+\delta)}.\end{array}
\] 
As above, it is assumed that $\theta,\lambda$ are chosen such that $\nu_t^{\theta,\lambda}>0$, thus~\eqref{eq:CL} is well-posed.
\end{Definition}

\begin{Theorem}[Convergence of the decoupled approximation]\label{prop:decoupled} 
Let $\p\in \P_{CH},\ s\geq0$ and let $U^0\equiv(\zeta^0,\psi^0)^\top\in H^{s+N}(\RR)^2$, with $N$ sufficiently large, satisfy the hypotheses of~\cite[Theorem~5]{Lannes13} (see Theorem~\ref{th:WPEuler}) as well as the ones of Theorem~\ref{thbi1mr}. Then there exists $C,T>0$, independent of $\p$, such that 
\begin{itemize}
\item There exists a unique solution $U\equiv (\zeta,\psi)^\top$ to the full Euler system~\eqref{eqn:EulerCompletAdim}, with $\beta=0$ and $d=1$, defined on $[0,T]$ and with initial data $(\zeta^0,\psi^0)^\top$ (provided by Theorem~5 in~\cite{Lannes13}). We denote $v=\b u_2-\gamma \b u_1$ the shear layer-mean velocity with $\b u_1,\b u_2$ as in Definition~\ref{defU1U2};
\item The Constantin-Lannes decoupled approximation $U_{CL}\equiv (\zeta_{\text{d}},v_{\text{d}})^\top$ with initial data $(\zeta^0,v\id{t=0})^\top$ is uniquely defined for $t\in [0,T]$ as described above.
\item The difference between the two solutions is controlled as
\[ \big\vert (\zeta,v)-(\zeta_{\text{d}},v_{\text{d}}) \big\vert_{L^\infty([0,T];H^s)^2} \leq \ C\ \big( \varepsilon_0 \ \min(t,t^{1/2}) (1 \ + \ \varepsilon_0 t ) \ + \ \Bo^{-1}t \big),\]
with $\varepsilon_0 = \max\{\epsilon(\delta^2-\gamma),\mu\}$.
\item Moreover, assume that the initial data is sufficiently localized in space, that is more precisely $(1+|\cdot |^2) \partial^k\zeta^0$ and $(1+|\cdot |^2)\partial^kv\id{t=0}\in H^{s}(\RR)$, $k\in\{0,\dots,7\}$, then one has the improved estimate
\[  \big\vert (\zeta,v)-(\zeta_{\text{d}},v_{\text{d}}) \big\vert_{L^\infty([0,T];H^s)^2}  \leq \ C\ \big(\varepsilon_0 \ \min(t,1) (1 \ + \ \varepsilon_0 t ) \ + \ \Bo^{-1}t\big).\]
\end{itemize}
The first three items hold for large time $t\in [0,T/\epsilon]$, with $T$ independent of $\p$, provided that a stronger criterion is satisfied by the initial data (see hypotheses in~\cite[Theorem 6]{Lannes13}). In that case, the last item is valid for time $t\in [0,T/\max\{\epsilon,\mu\}]$.
\end{Theorem}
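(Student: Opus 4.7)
The plan is to use the Green-Naghdi system~\eqref{eq:Serre2mr} as an intermediate asymptotic model, which sits ``between'' the full Euler system and the Constantin-Lannes decoupled approximation. Indeed, Theorem~\ref{prop:stability} already provides the piece $(\zeta,v)-(\zeta_a,v_a)$ of size $\mu^2+\Bo^{-1}$ on the relevant time scale. It remains to compare the Green-Naghdi solution $(\zeta_a,v_a)$ to the decoupled approximation $U_{CL}=(\zeta_{\text{d}},v_{\text{d}})$ and then invoke the triangle inequality.

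First I would verify that the decoupled approximation is well-defined on the prescribed time interval: each $v_\pm^\lambda$ solves a Constantin--Lannes-type scalar equation~\eqref{eq:CL}, which, under the condition $\nu_t^{\theta,\lambda}>0$, is locally well-posed in $H^s$ (see~\cite{ConstantinLannes09}) and its lifespan can be bounded from below uniformly in $\p\in\P_{CH}$ thanks to energy estimates at the scale of the Camassa-Holm regime. Then I would plug $U_{CL}$ into~\eqref{eq:Serre2mr} and obtain a consistency estimate: the residual is the sum of (i) the intrinsic $\O(\mu^2)$ error coming from the derivation of~\eqref{eq:CL} from~\eqref{eq:Serre2mr} along each characteristic direction, and more importantly (ii) the coupling residual that mixes the right-going profile $v_+(t,x-t)$ with the left-going $v_-(t,x+t)$ through the quadratic and cubic nonlinearities of~\eqref{eq:Serre2mr}. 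This consistency computation is essentially algebraic and follows the formal derivation used in~\cite{Duchene13}, and uses the relation~\eqref{ztov} together with the structure of the wave-splitting $(\zeta_{\text{d}},v_{\text{d}})=(v_++v_-,(\gamma+\delta)(v_+-v_-))$.

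Next, to pass from consistency to convergence, I would appeal to the a priori comparison estimate between two approximate solutions of~\eqref{eq:Serre2mr} (Proposition~7.1 / Theorem~7.5 of~\cite{DucheneIsrawiTalhouk}, referred to in the Remark after Theorem~\ref{prop:stability}). This estimate will transform the consistency residual into the same bound on $(\zeta_a,v_a)-(\zeta_{\text{d}},v_{\text{d}})$ in $X^s$, provided the hypotheses~\eqref{depthcond},\eqref{CondEllipticity} remain valid — which they do since $(\zeta^0,v\id{t=0})$ satisfies the hypotheses of Theorem~\ref{thbi1mr} and the decoupled flow preserves them on the interval considered. Combining with Theorem~\ref{prop:stability} via the triangle inequality yields the first part of the statement.

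The main obstacle is the control of the coupling residual (ii), because naively integrating it in time produces linear secular growth $\O(\varepsilon_0 t)$, whereas the theorem asserts the sharper sublinear bound $\varepsilon_0 \min(t,t^{1/2})(1+\varepsilon_0 t)$, and even $\varepsilon_0 \min(t,1)(1+\varepsilon_0 t)$ under localization. To recover the $t^{1/2}$ factor I would exploit the fact that products of the form $v_+(t,x-t)\,v_-(t,x+t)$ carry two distinct group velocities, so that their $L^2_x$ norm decays (after integration by parts in time against the free transport semigroup, or equivalently via non-stationary phase in the Duhamel formula) by a $t^{-1/2}$ factor; this is the classical bilinear estimate used in~\cite{BonaColinLannes05} and adapted to the bi-fluidic setting in~\cite{Duchene13}. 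For the improved estimate, spatial localization (weighted $L^2$ control of up to seven derivatives of the initial data) provides the integrable-in-time dispersive decay of the interaction, yielding a uniformly bounded contribution $\min(t,1)$ instead of $\min(t,t^{1/2})$. The last item (long time scale $t\in[0,T/\epsilon]$, resp.\ $[0,T/\max\{\epsilon,\mu\}]$) follows by upgrading the hypothesis of Theorem~\ref{th:WPEuler} (criterion (5.5) and Theorem~6 of~\cite{Lannes13}) together with the large-time version of Theorem~\ref{thbi1mr}, after checking that the bootstrap controlling the energy of $(\zeta_{\text{d}},v_{\text{d}})$ remains valid.
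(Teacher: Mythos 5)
Your proposal follows essentially the same route as the paper, which states this theorem without its own proof and justifies it by combining the consistency of the decoupled (Constantin--Lannes) approximation with the Green--Naghdi model~\eqref{eq:Serre2mr} established in~\cite{Duchene13}, the a priori estimate between two approximate solutions of that model (Proposition~7.1 and Theorem~7.5 of~\cite{DucheneIsrawiTalhouk}), and the convergence result of Theorem~\ref{prop:stability}, exactly as in your triangle-inequality scheme. Your handling of the secular coupling terms --- sublinear $t^{1/2}$ growth from the transversality of the two counter-propagating characteristics, improved to a bounded contribution under spatial localization, as in~\cite{BonaColinLannes05,Duchene13} --- is precisely the mechanism invoked in the cited works, so the argument is the intended one.
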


\end{document}